\newcommand{\commentout}[1]{}
\newcommand{\cT}{{\cal T}}
\newcommand{\tb}{{\sf tb}}
\newcommand{\tw}{{\sf tw}}
\newcommand{\tl}{{\sf tl}}
\newcommand{\itl}{{\sf itl}}
\newcommand{\itb}{{\sf itb}}
\newcommand{\BNC}{{\sf BNC}}
\newcommand{\bn}{{\sf bnc}}
\newcommand{\BDS}{{\sf BDS}}
\newcommand{\mc}{{\sf mcw}}
\newcommand{\adt}{{\sf adt}}
\newcommand{\ad}{{\sf ad}} 
\newcommand{\td}{{\sf td}} 
\newcommand{\cbc}{{\sf cbc}} 
\newcommand{\bgc}{{\sf bgc}}
\newcommand{\BGC}{{\sf BGC}} 
\newcommand{\stb}{{\sf stb}} 
\newcommand{\glc}{{\sf glc}} 
\newcommand{\br}{{\sf br}}  
\newcommand{\sh}{{\sf sh}}  
\newcommand{\ph}{{\sf ph}}  
\newcommand{\mf}{{\sf mf}}  
\newcommand{\p}{{\sf p}}  
\newcommand{\q}{{\sf q}}  
\begin{document}
\sloppy

\title{Graph parameters that are coarsely equivalent to tree-length\thanks{\date*{\today}}
}

\author{Feodor F. Dragan}

\institute{Computer Science Department, Kent State University, Kent, Ohio,  USA \\
\email{dragan@cs.kent.edu}  
}
\maketitle

\sloppy

\begin{abstract} Two graph parameters are said to be coarsely equivalent if they are within constant factors from each other for every graph $G$. 
Recently, several graph parameters were shown to be coarsely  equivalent to tree-length. Recall that the length of a tree-decomposition $\cT(G)$ of a graph $G$ is the largest diameter of a bag in  $\cT(G)$, and the  tree-length of $G$ is the minimum of the length, over all tree-decompositions of $G$. 
We present simpler and sometimes with better bounds proofs for those known in literature results and further extend this list of graph parameters coarsely equivalent to tree-length. 
Among other new results, we show that the tree-length of a graph $G$ is small if and only if for every bramble ${\cal F}$ (or every Helly family of connected subgraphs ${\cal F}$, or every Helly family of paths ${\cal F}$)  of $G$,  there is a disk in $G$ with  
small radius that intercepts all members of ${\cal F}$. Furthermore, the tree-length of a graph $G$ is small if and only if $G$ can be embedded with a 
small additive distortion to an unweighted tree with the same vertex set as in $G$ (not involving any Steiner points). Additionally, we introduce a new natural "bridging`` property for cycles,  
which generalizes a known property of cycles in chordal graphs, and show that it also coarsely defines the tree-length.   
 \medskip

\noindent
{\bf Keywords:} Tree-decomposition; Tree-length; Quasi-isometry; Fat minor; Bramble; Helly family; Cycle property.  
 \medskip

\noindent
{\bf Mathematical Subject Classification:} 05C10; 05C62
\end{abstract}

\section{Introduction} 
We say that two graph parameters $\p$ and $\q$ are {\em coarsely equivalent} if 
there are two universal constants $\alpha>0$ and $\beta>0$ such that $\alpha\cdot \q(G)\le \p(G)\le \beta\cdot \q(G)$ for every graph $G$.  So, if one parameter is bounded by a constant, then the other is bounded by a constant, too. 
Coarse equivalency of two graph parameters is useful in at least two scenarios. First, if one parameter is easier to compute, then it provides an easily computable constant-factor approximate for possibly hard to compute other parameter. This is the case for parameters the {\em cluster-diameter of a layering partition} and the {\em tree-length} of a graph (that are subjects of this paper; formal definitions of these and other parameters can be found in Section \ref{sec:prelim-on-par}). It is known \cite{AbDr16,DDGY-spanners,Dorisb2007,tree-spanner-appr}  that those two parameters are within small constant factors from each other. 
The cluster-diameter of a layering partition can easily be computed,  and a layering partition serves as a crucial tool in designing an efficient 
3-approximation algorithm for computing the tree-length of a graph (and its tree-decomposition with minimum length), which is NP-hard to compute exactly (see Section \ref{sec:layer-partit} and Section \ref{sec:tl} for details). 
Secondly, since a constant bound on one parameter implies a constant bound on the other, one can choose out of two a most suitable (a right one) parameter when designing FPT (approximation) algorithms for some particular optimization problems on bounded parameter graphs. For example, an FPT algorithm for the so-called {\em metric dimension problem} on bounded tree-length graphs developed in \cite{BFGR2017}  or sparse spanner  results obtained for bounded tree-length graphs in \cite{DDGY-spanners} are  useful and hold also for graphs with bounded cluster-diameter of a layering partition.  Approximation algorithms (whose approximation error bounds depend on the cluster-diameter of a layering partition) for the connected $r$-domination problem and the connected $p$-center problem developed in \cite{par-conn-p-c} are useful also for graphs with bounded tree-length. 

Layering partition and its cluster-diameter were used also in obtaining a 6-approximation algorithm for the problem of optimal non-contractive embedding of an unweighted graph metric into a weighted tree metric. This was possible because the cluster-diameter of a layering partition and the minimum distortion of an embedding of a graph into a tree (the so-called {\em tree-distortion} parameter) are within small constant factors from each other (see \cite{AbDr16,ChepoiDNRV12,WG13-Dragan,tree-spanner-appr};   Section \ref{sec:embed-to-tree} provides some details). 
The cluster-diameter of a layering partition and the tree distortion were the first two graph parameters shown to 
be coarsely equivalent with  the 
tree-length of a graph.

This paper is inspired by recent insightful papers  \cite{BerSey2024,GeorPapa2023} and \cite{Diestel++}. They added several additional  graph parameters to the list of parameters that are coarsely equivalent to tree-length. Among other results, \cite{BerSey2024} showed that the tree-length of a graph $G$ is bounded if and only if there is an {\em $(L,C)$-quasi-isometry} (equivalently, an {\em $(1,C')$-quasi-isometry}) to a (weighted and with Steiner points) tree with $L,C$ ($C'$, respectively) bounded (see Section \ref{sec:embed-to-tree} for more details). There were results already known that characterize when a graph is quasi-isometric to a tree \cite{ChepoiDNRV12,Kerr,manning}. For example, a theorem of Manning for geodesic metric spaces (see \cite{manning}) implies that a graph is quasi-isometric to a tree if and only if its {\em bottleneck constant} is bounded. Authors of \cite{BerSey2024} give a graph theoretic proof for this (see Section \ref{sec:early-bn} for more details).  
One of the main results of \cite{BerSey2024} is a proof of Rose McCarty's  conjecture that the tree-length of a graph is small if and only if its {\em McCarty-width} is small (see Section \ref{sec:early-mcw} for more details). In a quest to find a cycle property that coarsely describes the tree-length, \cite{BerSey2024} introduced a new notion of bounded load geodesic cycles (see Section \ref{sec:glc} for the definition and some details). It was shown \cite{BerSey2024}  that the tree-length of a graph $G$ is bounded if and only if  
every geodesic loaded cycle of $G$ has bounded load. Recently, in \cite{GeorPapa2023}, the Manning's Theorem was  extended to include also a characterization via $K$-fat $K_3$-minors (see Section \ref{sec:early-bn} for a definition). It was shown \cite{GeorPapa2023} that the bottleneck constant of a graph $G$ is bounded by a constant if and only if $G$ has no $K$-fat $K_3$-minor for some constant $K>0$. 

We give an extended overview of those existing results in Section \ref{sec:prelim-on-par}. In Section \ref{sec:proofs}, 
we incorporate the  cluster-diameter of a layering partition into the repertoire (the first graph parameter known to be coarsely equivalent to tree-length) and use it to simplify some proofs and, in some cases, get even better coarseness bounds than in \cite{BerSey2024,GeorPapa2023} (see Section \ref{sec:bnc}, Section \ref{sec:mcw}, and Section \ref{sec:adt}, Section \ref{sec:fat}).  Our proofs are  simpler and more direct. One of our results shows that the tree-length of a graph $G$ is bounded if and only if there is an {\em $(1,C)$-quasi-isometry}, with $C$ bounded, to  
an unweighted tree with the same vertex set as in $G$  
(a most restrictive quasi-isometry to a tree; see Section \ref{sec:adt}). As a byproduct, we also obtain several alternative proofs of McCarty's conjecture (see Theorem \ref{th:mcw-delta-rho}, Corollary   \ref{cor:mcw-adt}, and Theorem \ref{th:mf-tl-mcw}). 
More importantly, we   
add a number of new graph parameters that are coarsely equivalent to tree-length. Any result obtained for graphs with bounded tree-length automatically applies/extends to graphs with bounded such parameters. 
Among other results, we show that the tree-length of a graph $G$ is bounded if and only if for every bramble ${\cal F}$ (or every Helly family of connected subgraphs ${\cal F}$, or every Helly family of paths ${\cal F}$)  of $G$,  there is a disk in $G$ with bounded radius that intercepts all members of ${\cal F}$ (see Section \ref{sec:br-Helly}). In Section \ref{sec:cbc}, we generalize a known {\em characteristic cycle property} of chordal graphs (graphs with tree-length equal 1) and introduce two new cycle related parameters. We show that both these parameters coarsely define the tree-length.  
We conclude the paper with a few open questions (Section \ref{sec:concl}).

\medskip
\noindent
{\bf Basic notions and notations.} 
All graphs $G$ in this paper are connected, unweighted, undirected, loopless and without multiple edges. 
We assume also, for simplicity, that $G$ is finite although most of 
our non-algorithmic results and their proofs work for infinite graphs, too. 
For a (finite) graph $G=(V,E)$, we use $n$ and $|V|$ interchangeably to denote the number of vertices in $G$. Also, we use $m$ and $|E|$ to denote the number of edges. When we talk about two or more graphs, we may use $V(G)$ and $E(G)$ to indicate that these are the vertex and edge sets of a graph $G$. For a subset $S\subseteq V$, by $G[S]$ we  denote a subgraph of $G$ induced by the vertices of $S$.    

The {\em length of a path} $P(v,u):=(v=v_0,v_1,\dots,v_{\ell-1},v_{\ell}=u)$ from a vertex $v$ to a vertex $u$ is $\ell$, i.e., the number of edges in the path. The {\em distance} $d_G(u,v)$ between vertices $u$ and $v$ is the length of the shortest path connecting $u$ and $v$ in $G$. 
The distance between a vertex $v$ and a subset $S\subseteq V$ is defined as $d_G(v,S):=\min\{d_G(v,u): u\in S\}$. Similarly, let $d_G(S_1.S_2):=\min\{d_G(x,y): x\in S_1, y\in S_2\}$ for any two sets $S_1,S_2\subseteq V$. 
The $k^{th}$ {\em power} $G^k$ of a graph $G$ is a graph that has the same set of vertices, but in which two distinct vertices are adjacent if and only if their distance in $G$ is at most $k$. 
The {\em disk} $D_r(s,G)$ of a graph $G$ centered at vertex $s \in V$ and with radius $r$ is the set of all vertices with distance no more than $r$ from $s$ (i.e., $D_r(s,G)=\{v\in V: d_G(v,s) \leq r \}$). We omit the graph name $G$ and write  $D_r(s)$ if the context is about only one graph. A cycle $C$ of a graph $G$ is called {\em geodesic} if $d_C(x,y)=d_G(x,y)$ for every $x,y$ in $C$. 

The \emph{diameter} of a subset $S\subseteq V$ of vertices of a graph $G$ is the largest distance  in $G$ between a pair of vertices of $S$, i.e., $\max_{u,v \in S}d_G(u,v)$. The \emph{inner diameter} of $S$  is the largest distance in $G[S]$ between a pair of vertices of $S$, i.e., $\max_{u,v \in S}d_{G[S]}(u,v)$.  The \emph{radius} of a subset $S\subseteq V$ of vertices of a graph $G$ is the minimum $r$ such that a vertex $v\in V$ exists with $S\subseteq D_r(v,G)$. The \emph{ inner radius} of a subset $S$ is the minimum $r$ such that a vertex $v\in S$ exists with $S\subseteq D_r(v,G[S])$.  

Let  $G=(V,E)$ be a graph and $X\subseteq V$ be a subset  of vertices of $G$. A disk $D_r(v)$ (a clique $C$) of $G$ is called a {\em balanced disk $($clique$)$ separator of $G$ with respect to $X$}, if every connected component of $G[V\setminus D_r(v)]$ (of $G[V\setminus C]$, respectively) has at most $|X|/2$ vertices from $X$.

Definitions of graph parameters considered in this paper, 
as well as notions and notation local to a section, are given in appropriate sections. We realize that there are too many parameters and abbreviations and this may cause some difficulties in following them.  We give in Appendix a glossary for all parameters and summarize all inequalities and relations between them.

\section{Preliminaries on graph parameters} \label{sec:prelim-on-par}

We start with parameters central to our proofs and giving  the best to date approximation algorithm for computing the tree-length and the tree-breadth of a graph. 

\subsection{Layering partition, its cluster-diameter, cluster-radius, and a canonical tree} \label{sec:layer-partit}

Layering partition is a graph decomposition procedure introduced in~\cite{DBLP:journals/jal/BrandstadtCD99,DBLP:journals/ejc/ChepoiD00} and used in~\cite{BaInSi,DBLP:journals/jal/BrandstadtCD99,DBLP:journals/ejc/ChepoiD00,ChDrEsHaVaXi12,ChepoiDNRV12} 
for embedding graph metrics into trees. 

	\begin{figure}[htb]
 \footnotesize
		\centering
		\subfigure[][Layering of a graph $G$ with respect to $s$.]
		{
			\scalebox{0.24}[0.24]{\includegraphics{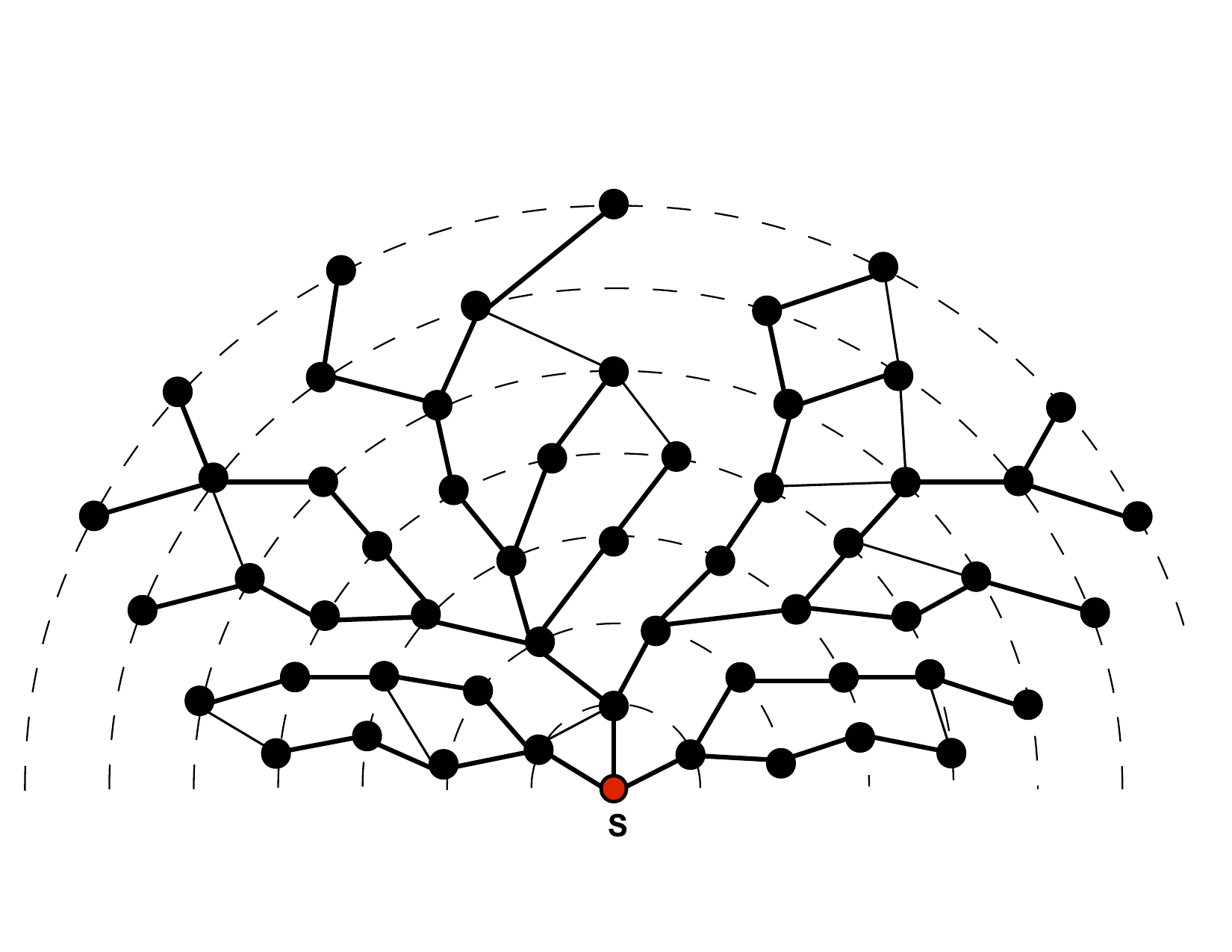}}
			\label{fig:layering}
		}
		\hspace{7ex}%
		\subfigure[][ Clusters of the layering partition $\mathcal{LP}(G,s)$.]
		{
			\scalebox{0.24}[0.24]{\includegraphics{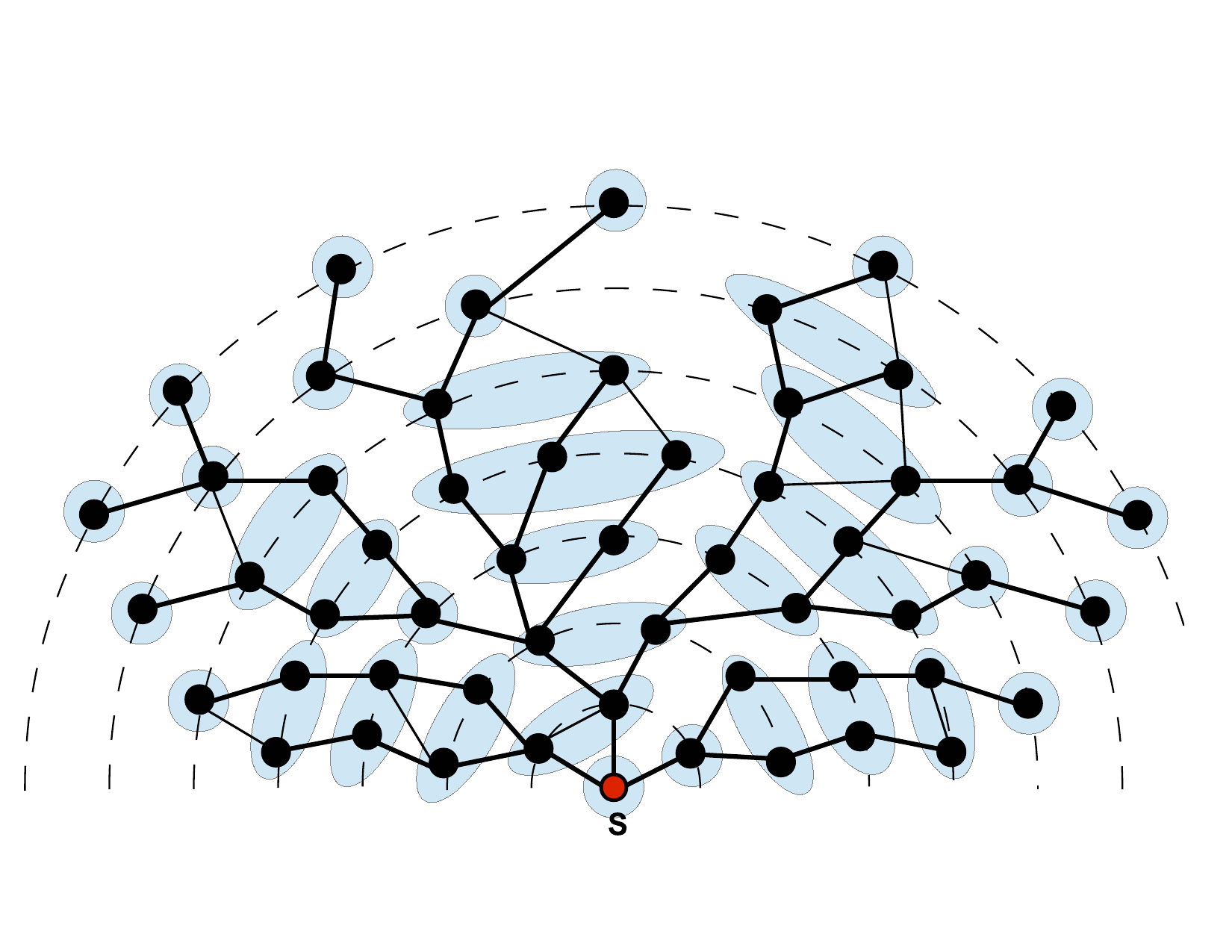}}
			\label{fig:Layering-clusters}
		}
		
		\subfigure[][Layering tree $\Gamma(G,s)$.]
		{
			\scalebox{0.24}[0.24]{\includegraphics{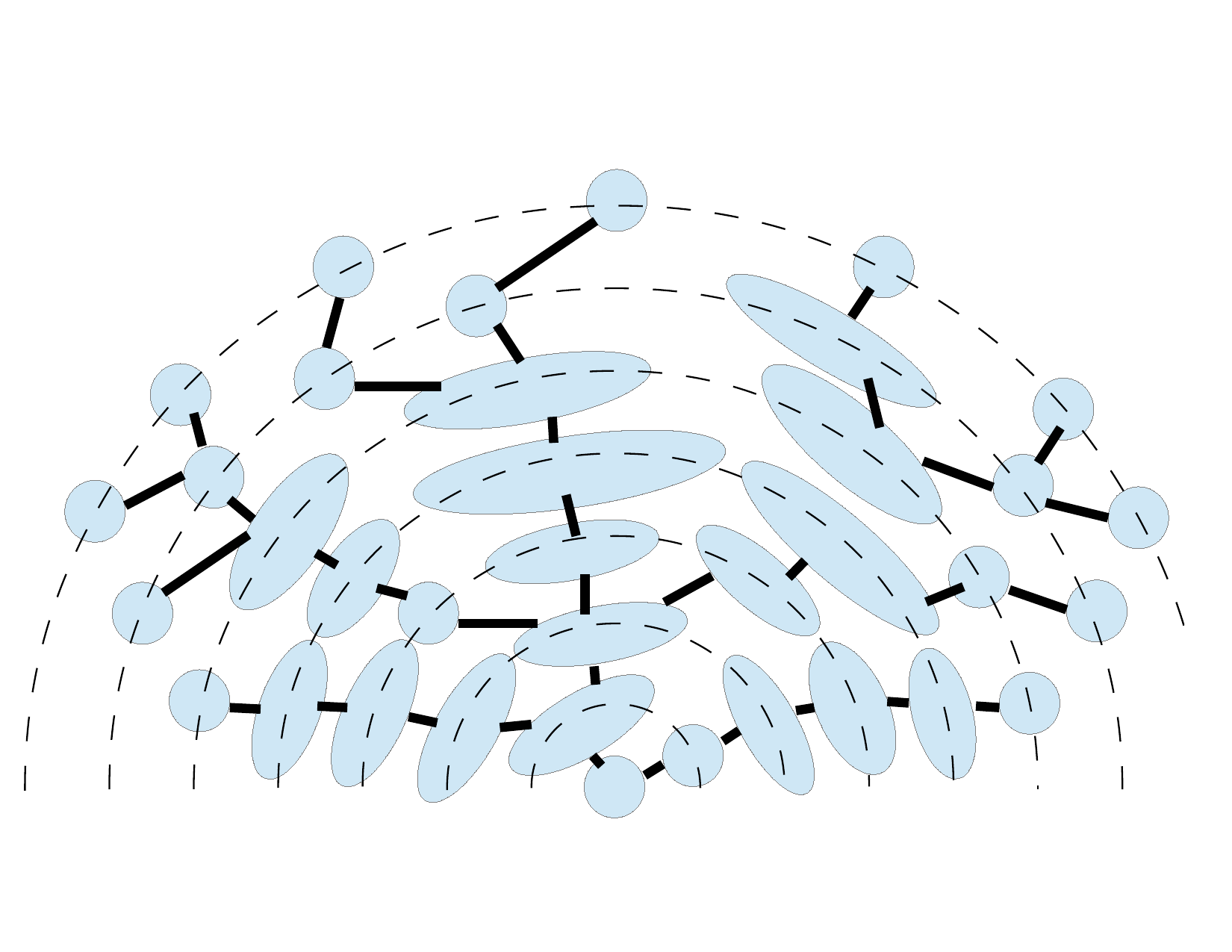}}
			\label{fig:gamma}
		}
        \hspace{7ex}%
		\subfigure[][ Canonical tree $H$. ]
		{
			\scalebox{0.24}[0.24]{\includegraphics{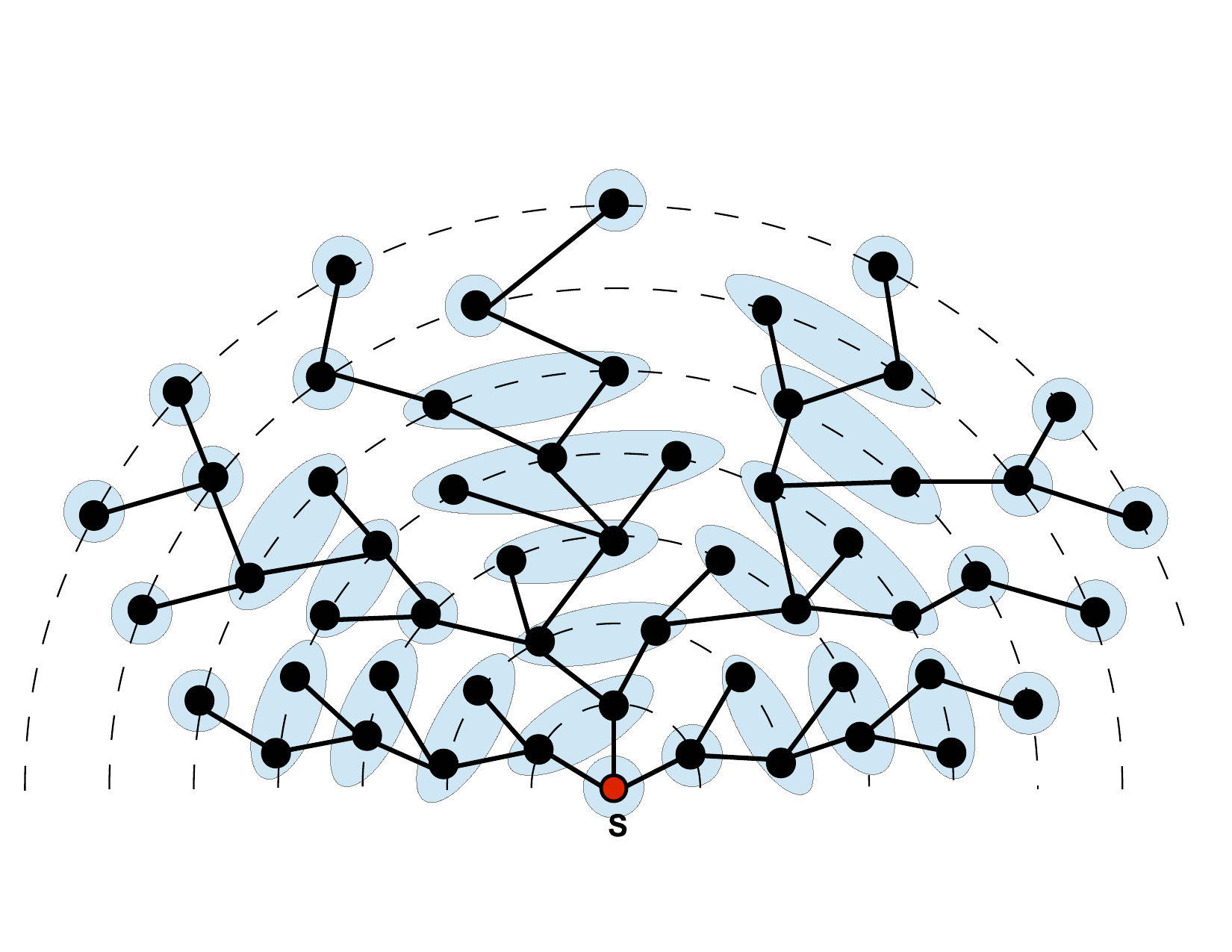}}
			\label{fig:tree-H}
		}
		\caption{\small Layering partition and associated constructs (taken from \cite{AbDr16}).}
		\label{fig:layering-partition}
	\end{figure}

A \emph{layering} of a graph $G=(V, E)$ with respect to a start vertex $s$ is the decomposition of $V$ into $q+1$ layers (spheres) $L^i=\{u\in V:d_G(s,u)=i\},i=0,1,\dots,q$ (here, $q:=\max\{d_G(s,v): v\in V\}$). A \emph{layering partition} $\mathcal{LP}(G,s)=\{L^i_1,\ldots,L^i_{p_i}:i=0,1,\dots,q\}$ of $G$ is a partition of each layer $L^i$ into clusters $L^i_1,\dots,L^i_{p_i}$ such that two vertices $u,v \in L^i$ belong to the same cluster $L^i_j$ if and only if they can be connected by a path outside the disk $D_{i-1}(s)$ of radius $i-1$ centered at $s$. Here $p_i$ is the number of clusters in layer $i$. See Fig. \ref{fig:layering-partition} for an illustration. A layering partition of a graph can be constructed in $O(n+m)$ time (see~\cite{DBLP:journals/ejc/ChepoiD00}).

A \emph{layering tree} $\Gamma(G,s)$ of a graph $G$ with respect to a layering partition $\mathcal{LP}(G,s)$  is the graph whose nodes are the clusters of $\mathcal{LP}(G,s)$ and where two nodes $C=L_j^i$ and $C'=L_{j'}^{i'}$ are adjacent in $\Gamma(G,s)$ if and only if there exist a vertex $u \in C$ and a vertex $v\in C'$ such that $uv \in E$. It was shown in~\cite{DBLP:journals/jal/BrandstadtCD99} that the graph $\Gamma(G,s)$ is always a tree and, given a start vertex $s$, it can be constructed in $O(n+m)$ time~\cite{DBLP:journals/ejc/ChepoiD00}. Note that, for a fixed start vertex $s\in V$, the layering partition $\mathcal{LP}(G,s)$ of $G$ and its tree $\Gamma(G,s)$ are unique.

The \emph{cluster-diameter $\Delta_s(G)$ of layering partition $\mathcal{LP}(G,s)$ with respect to vertex $s$} is the largest diameter of a cluster in $\mathcal{LP}(G,s)$, i.e., $\Delta_s(G)=\max_{C \in \mathcal{LP}(G,s)} \max_{u,v\in C}d_G(u,v)$. Denote by $\Delta(G)$ ($\widehat{\Delta}(G)$) the minimum (the maximum, respectively) cluster-diameter over all layering partitions of $G$, i.e. $\Delta(G)=\min_{s \in V}\Delta_s(G)$ and $\widehat{\Delta}(G)=\max_{s \in V}\Delta_s(G)$.

The \emph{cluster-radius $\rho_s(G)$ of layering partition $\mathcal{LP}(G,s)$ with respect to a vertex $s$} is the smallest number $r$ such that for any cluster $C \in \mathcal{LP}(G,s)$ there is a vertex $v \in V$ with $C \subseteq D_r(v)$. Denote by $\rho(G)$ ($\widehat{\rho}(G)$)  the minimum (the maximum, respectively) cluster-radius over all layering partitions of $G$, i.e., $\rho(G)=\min_{s \in V}\rho_s(G)$ and $\widehat{\rho}(G)=\max_{s \in V}\rho_s(G)$. 


Finding the cluster-diameter $\Delta_s(G)$ and the cluster-radius $\rho_s(G)$ for a given layering partition $\mathcal{LP}(G,s)$ of a graph $G$ requires $O(nm)$ time
, although the construction of the layering partition $\mathcal{LP}(G,s)$ itself, for a given vertex $s$, takes only $O(n+m)$ time. Since the diameter of any set is at least its radius and at most twice its radius, we have the following inequality: $$\rho_s(G) \leq \Delta_s(G) \leq 2\rho_s(G).$$

It is not hard to show that, for any graph $G$ and any two of its vertices $s,q$, $\Delta_q(G)\leq 3 \Delta_s(G)$. Thus, the choice of the start vertex for constructing a layering partition of $G$   is not that important.

\begin{proposition} [\cite{slimness}] \label{prop:ClustDiamAtAnys}
	Let $s$ be an arbitrary vertex of $G$. For every vertex $q$ of $G$, $\Delta_q(G)\le 3\Delta_s(G)$. In particular,
	$\Delta(G)\le \widehat{\Delta}(G)\le 3 \Delta(G)$ for every graph $G$.
\end{proposition}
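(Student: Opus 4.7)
The plan is to prove the first inequality $\Delta_q(G)\le 3\Delta_s(G)$; the ``in particular'' statement then follows by specializing to an $s$ that attains $\Delta(G)$ and maximizing over $q\in V$. Write $k=\Delta_s(G)$. Fix a cluster $C$ of $\mathcal{LP}(G,q)$ at $q$-level $i$ and two vertices $x,y\in C$; the task is to show $d_G(x,y)\le 3k$. By the definition of $\mathcal{LP}(G,q)$, there is a path $P=v_0,\dots,v_\ell$ from $x$ to $y$ in $G$ with $d_G(q,v_j)\ge i$ for every $j$.

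The first step is to relate $P$ to the layering $\mathcal{LP}(G,s)$ via its layering tree $\Gamma(G,s)$. I would record two elementary facts: (i) $d_G(s,v_j)$ changes by at most one between consecutive indices $j$; (ii) two $G$-adjacent vertices at the same $s$-level must lie in the same $s$-cluster, since the joining edge itself witnesses a connection outside the relevant disk around $s$. Together these imply that, as we walk along $P$, the $s$-cluster membership of $v_j$ traces a walk in $\Gamma(G,s)$ from $C_s(x)$ to $C_s(y)$, moving at each step to the same cluster, its parent, or one of its children.

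Next, I would pick a pivot: let $w\in P$ minimize $d_G(s,\cdot)$ along $P$ and set $m:=d_G(s,w)$. Since every vertex of $P$ has $s$-level at least $m$, the whole path $P$ lies in $V\setminus D_{m-1}(s)$, so $x$, $y$, and $w$ all lie in a common connected component $K$ of $V\setminus D_{m-1}(s)$. Following a BFS-tree-from-$s$ shortest path from $x$ until it first reaches level $m$ yields a vertex $u$ that lies in $K$ (its portion down to level $m$ stays at $s$-levels $\ge m$, hence in $V\setminus D_{m-1}(s)$, and is connected to $x\in K$). Constructing $v$ analogously from $y$, both $u$ and $v$ belong to the $s$-cluster $C_s(w)$ (they are level-$m$ vertices in $K$), so $d_G(u,v)\le k$. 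Since $d_G(x,u)=d_G(s,x)-m$ and $d_G(y,v)=d_G(s,y)-m$, the triangle inequality gives
\begin{equation*}
d_G(x,y)\;\le\;\bigl(d_G(s,x)-m\bigr)\;+\;k\;+\;\bigl(d_G(s,y)-m\bigr).
\end{equation*}

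The main obstacle, and the technical heart of the argument, is to establish that each of $d_G(s,x)-m$ and $d_G(s,y)-m$ is bounded by $k$. The intuition is that the $q$-avoiding constraint $d_G(q,v_j)\ge i$ on $P$, combined with the triangle inequality through $s$ and $q$, prevents the $s$-level along $P$ from descending too far below $d_G(s,x)$ or $d_G(s,y)$ without violating the diameter-$k$ property of some $s$-cluster encountered along the way. I expect to formalize this by tracking the $\Gamma(G,s)$-walk traced by $P$ and, at each level-decreasing step, exhibiting a vertex of the current $s$-cluster that is $G$-close (within $k$) to the continuation of $P$, leveraging both the cluster-diameter bound $k$ and the $q$-avoidance of $P$. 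Once these two bounds are in hand, the displayed inequality yields $d_G(x,y)\le 3k$, proving $\Delta_q(G)\le 3\Delta_s(G)$.
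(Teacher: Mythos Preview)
The paper does not prove this proposition itself (it is quoted from \cite{slimness}), so there is no ``paper's own proof'' to compare against; I evaluate your argument on its merits.

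Your setup through the displayed inequality is correct: with $w$ minimizing the $s$-level along $P$ and $m:=d_G(s,w)$, the level-$m$ vertices $u,v,w$ indeed lie in one $s$-cluster, and $d_G(x,y)\le (a-m)+k+(b-m)$ follows. The problem is the ``main obstacle'' you flag but do not resolve. Your intuition (``the $q$-avoiding constraint \dots\ prevents the $s$-level along $P$ from descending too far'') is not a proof, and the specific claims $a-m\le k$, $b-m\le k$ can fail. The cluster $C_w$ you land in is an ancestor of both $C_x$ and $C_y$ in $\Gamma(G,s)$, but the cluster that actually controls the situation is the \emph{median} $M$ of $C_x,C_y,C_q$ in the tree $\Gamma(G,s)$. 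When $C_q$ lies in the subtree below $A:=\mathrm{LCA}(C_x,C_y)$ on (say) the $C_x$-side, $M$ is a proper descendant of $A$, while your $C_w$ is an ancestor of $A$; then $a-m\ge a-\alpha$ can exceed $k$ even though $d_G(x,y)\le 3k$ still holds for other reasons. Nothing in your sketch introduces the ingredient needed to detect $M$.

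What is missing is the use of shortest paths $Q_x,Q_y$ from $q$ to $x,y$. Each of $P,Q_x,Q_y$ traces a walk in $\Gamma(G,s)$ between two of $C_x,C_y,C_q$, and in a tree every such walk must visit the median $M$; pick $w^*\in P\cap M$, $x'\in Q_x\cap M$, $y'\in Q_y\cap M$. All pairwise $G$-distances among $w^*,x',y'$ are at most $k$. Since $w^*\in P$ gives $d_G(q,w^*)\ge i$ and $d_G(x',w^*)\le k$, we get $d_G(q,x')\ge i-k$; as $x'$ lies on the length-$i$ geodesic $Q_x$, this forces $d_G(x,x')\le k$, and symmetrically $d_G(y,y')\le k$. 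Together with $d_G(x',y')\le k$ this yields $d_G(x,y)\le 3k$. Your pivot $w$ and the associated $u,v$ simply sit in the wrong cluster to make this work; the argument hinges on bringing $q$ (via $Q_x,Q_y$) into the picture.
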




A tree $H=(V,F)$ of a graph $G=(V,E)$, called a {\em canonical tree of $G$}, is constructed from a layering partition $\mathcal{LP}(G,s)$ of $G$ by identifying for each cluster $C=L^i_j \in \mathcal{LP}(G,s)$ an arbitrary vertex $x_C \in L_{i-1}$  which has a neighbor in $C = L^i_j$ and by making $x_C$ adjacent in $H$ with all vertices $v\in C$ (see Fig. \ref{fig:tree-H} for an illustration). The tree $H$ closely reproduces the global structure of the layering tree $\Gamma(G,s)$. 
It was shown in~\cite{ChepoiDNRV12} that the tree $H$ for a graph $G$ can be constructed in $O(n+m)$ time. 

The following result~\cite{ChepoiDNRV12} relates the cluster-diameter of a layering partition of $G$ to  embeddability of graph $G$ to the tree $H$.
\begin{proposition} [\cite{ChepoiDNRV12}]
\label{lem:cluster-diam}
For every graph $G=(V,E)$ and any vertex $s$ of $G$, $$\forall x,y \in V, ~~d_H(x,y)-2 \leq d_G(x,y) \leq d_H(x,y)+\Delta_s(G).$$
\end{proposition}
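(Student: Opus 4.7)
The plan is to exploit the rooted structure of $H$ with root $s$. In $H$, each vertex $v\in L^i$ with $i\ge 1$ has its unique neighbor $x_{C(v)}\in L^{i-1}$ as parent, where $C(v)$ denotes the cluster of $\mathcal{LP}(G,s)$ containing $v$; an easy induction on $i$ then gives $d_H(v,s)=i=d_G(v,s)$, so the $H$-ancestors of $v$ run through the chain in the layering tree $\Gamma(G,s)$ from $C(v)$ up to the root $\{s\}$. Writing $a=d_G(x,s)$, $b=d_G(y,s)$ and letting $C^*\subseteq L^{c^*}$ be the $\Gamma$-LCA of $C(x)$ and $C(y)$, a direct inspection of $H$ yields
$$(a-c^*)+(b-c^*)\;\le\;d_H(x,y)\;\le\;(a-c^*)+(b-c^*)+2,$$
with the ``$+2$'' coming from the two extra edges through $x_{C^*}$ when $c^*\ge 1$.

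For the upper bound $d_G(x,y)\le d_H(x,y)+\Delta_s(G)$, I would use the standard fact (immediate from the cluster definition) that every shortest $v$-to-$s$ path in $G$ visits exactly one vertex of each cluster on the chain in $\Gamma$ from $C(v)$ up to the root. Applying this to $x$ and to $y$ gives $x'\in C^*$ with $d_G(x,x')=a-c^*$ and $y'\in C^*$ with $d_G(y,y')=b-c^*$. Since $x',y'$ share the cluster $C^*$, by the definition of $\Delta_s(G)$ we have $d_G(x',y')\le \Delta_s(G)$, and the triangle inequality gives
$$d_G(x,y)\;\le\;(a-c^*)+\Delta_s(G)+(b-c^*)\;\le\;d_H(x,y)+\Delta_s(G).$$

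For the lower bound $d_G(x,y)\ge d_H(x,y)-2$, I would use the graph-theoretic reading of ``$\Gamma$-LCA at level $c^*$'': since two vertices of $L^i$ lie in a common cluster iff they are joined by a path in $G[V\setminus D_{i-1}(s)]$, the vertices $x$ and $y$ must lie in distinct components of $G[V\setminus D_{c^*}(s)]$ whenever $c^*<\min(a,b)$ (and otherwise any $x$-$y$ path trivially meets $D_{c^*}(s)$, at $x$ or at $y$). Hence every shortest $x$-$y$ path in $G$ contains some vertex $v^*\in D_{c^*}(s)$; since $v^*$ lies on this shortest path and $d_G(v^*,s)\le c^*$, the triangle inequality gives
$$d_G(x,y)=d_G(x,v^*)+d_G(v^*,y)\;\ge\;(a-c^*)+(b-c^*)\;\ge\;d_H(x,y)-2.$$

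The main obstacle is the careful bookkeeping around identifying the $H$-LCA in terms of $c^*$ and handling the degenerate cases $x=y$, $C(x)=C(y)$, and $c^*\in\{0,\min(a,b)\}$, where the ``$+2$'' slack shifts or vanishes; each such case is elementary but must be checked so that the two-sided estimate holds uniformly.
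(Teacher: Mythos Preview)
Your argument is correct. Note, however, that the present paper does not prove this proposition itself: it is quoted from \cite{ChepoiDNRV12} and stated here without proof, so there is no ``paper's own proof'' to compare against directly.

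That said, your approach is exactly the standard one used in the original reference: identify the $\Gamma$-LCA $C^*$ of $C(x)$ and $C(y)$ at level $c^*$, observe that the $H$-LCA sits at level $c^*$ or $c^*-1$ (giving $d_H(x,y)\in\{(a-c^*)+(b-c^*),\,(a-c^*)+(b-c^*)+2\}$), then derive the upper bound by routing through two vertices $x',y'\in C^*$ reached along shortest paths to $s$ (costing at most $\Delta_s(G)$ to cross $C^*$), and the lower bound by using that $D_{c^*}(s)$ separates $x$ from $y$ when $c^*<\min(a,b)$. Your handling of the degenerate cases ($C(x)=C(y)$, $c^*=0$, $c^*=\min(a,b)$) is also correct. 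There is nothing to add.
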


Proposition \ref{lem:cluster-diam} shows that the additive distortion of embedding of a graph $G$ to tree $H$
is bounded by $\Delta_s(G)$, the largest diameter of a cluster in a layering partition of $G$.
This result indicates that the smaller the cluster-diameter $\Delta_s(G)$ (cluster-radius $\rho_s(G)$) of $G$, the closer graph $G$ is to a tree metrically. Note that trees have cluster-diameter and cluster-radius equal to $0$. Results similar to Proposition \ref{lem:cluster-diam} were first used in~\cite{DBLP:journals/jal/BrandstadtCD99} to embed a chordal graph to a tree with an additive distortion of most 2 and  
in~\cite{DBLP:journals/ejc/ChepoiD00} to embed a $k$-chordal graph to a tree with an additive distortion at most $k/2 +2$. In~\cite{ChepoiDNRV12}, Proposition \ref{lem:cluster-diam} was used  to obtain a 6-approximation algorithm for the problem of optimal non-contractive embedding of an unweighted graph metric into a weighted tree metric. For every {\em chordal graph} $G$ (a graph whose largest induced cycles have length 3),  $\Delta_s(G) \leq 3$ and $\rho_s(G)\leq 2$ hold~\cite{DBLP:journals/jal/BrandstadtCD99}. For every {\em $k$-chordal graph} $G$ (a graph whose largest induced cycles have length $k$), $\Delta_s(G) \leq k/2 +2$ holds~\cite{DBLP:journals/ejc/ChepoiD00}. For every graph $G$ embeddable non-contractively into a (weighted) tree with multiplication distortion $\alpha$, $\Delta_s(G) \leq 3\alpha$ holds~\cite{ChepoiDNRV12}. See Section \ref{sec:embed-to-tree} for more on this topic.

\subsection{Tree-length and tree-breadth} \label{sec:tl}
It is known that the class of chordal graphs can be characterized in terms of existence of so-called {\em clique-trees}.
Let ${\cal C}(G)$ denote the family of  maximal (by inclusion) cliques of a graph $G$.
A {\em clique-tree} ${\cal CT}(G)$ of $G$ has the maximal cliques of $G$ as its
nodes, and for every vertex $v$ of $G$, the maximal cliques containing $v$ form a subtree of ${\cal CT}(G)$.

\begin{proposition} 
 [\cite{Bunem1974,Gavri1974,Walte1972}]\label{cliquetreechordalgr} 
A graph is chordal if and only if it has a clique-tree.
\end{proposition}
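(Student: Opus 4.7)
The plan is to prove the two implications separately. For the forward direction, I would invoke a perfect elimination ordering (PEO) of the chordal graph $G$: an ordering $v_1,\ldots,v_n$ of $V(G)$ such that, for every $i$, the set of neighbors of $v_i$ in $G$ that lie in $\{v_{i+1},\ldots,v_n\}$ forms a clique of $G$. I would construct the clique-tree by induction, processing vertices from $v_n$ down to $v_1$ and maintaining at step $i$ a clique-tree ${\cal CT}_i$ of $G_i:=G[\{v_i,\ldots,v_n\}]$. When $v_i$ is inserted, its closed $G_i$-neighborhood $K_i$ is a clique; either $K_i$ is a new maximal clique of $G_i$, in which case I create a new node $K_i$ and attach it to the unique node of ${\cal CT}_{i+1}$ that already contains all neighbors of $v_i$ in $G_{i+1}$, or $K_i$ is absorbed into an existing maximal clique of $G_{i+1}$ by simply adding $v_i$ to that node. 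The invariant I would carry forward is that, for every already-processed vertex $u$, the nodes of the current tree containing $u$ form a connected subtree; this invariant is preserved precisely because of the PEO property.

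For the backward direction, suppose $G$ has a clique-tree $T$. For each $v\in V(G)$ let $T_v$ denote the subgraph of $T$ induced by the maximal cliques containing $v$; by the definition of clique-tree each $T_v$ is a subtree of $T$, and two vertices $u,v$ are adjacent in $G$ if and only if $T_u\cap T_v\neq\emptyset$ (they share a maximal clique). Suppose for contradiction that $C=(v_1,v_2,\ldots,v_k,v_1)$ is a chordless cycle of $G$ with $k\ge 4$. For each edge $v_iv_{i+1}$ of $C$ pick a node $M_i\in T_{v_i}\cap T_{v_{i+1}}$ of $T$. Concatenating the unique paths $M_1\to M_2\to\cdots\to M_k\to M_1$ in $T$ yields a closed walk, so some edge of $T$ must be traversed at least twice. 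I would then argue that this repeated edge lies simultaneously in two subtrees $T_{v_i}$ and $T_{v_j}$ with non-consecutive indices $i,j$ (mod $k$), forcing $v_iv_j\in E(G)$ and contradicting the chordlessness of $C$.

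The main obstacle is ensuring that the repeated-edge argument really yields non-consecutive indices; a careless choice of the repetition might sit inside two consecutive path segments and expose no new edge. To sidestep this I would either pick a deepest repeated edge (so the two surrounding subwalks span disjoint arcs of $C$, which forces non-consecutive $i,j$) or, more cleanly, invoke the Helly property of subtrees of a tree: pairwise intersecting subtrees of a tree share a common node. Applying Helly to a carefully chosen triple among $T_{v_1},\ldots,T_{v_k}$ produces a chord of $C$ and closes the argument. This Helly-based route is essentially Gavril's proof that the intersection graphs of subtrees of a tree are exactly the chordal graphs, and it is the line of attack I would ultimately pursue.
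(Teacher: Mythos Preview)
The paper does not prove this proposition at all; it is stated as a classical result with citations to Buneman, Gavril, and Walter and used only as background. So there is no ``paper's own proof'' to compare against.

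Your sketch is the standard textbook argument and is essentially correct. Two minor points. In the forward direction, the node of ${\cal CT}_{i+1}$ containing all forward neighbors of $v_i$ need not be \emph{unique}; what you actually need (and have) is that at least one such node exists, since those neighbors form a clique of $G_{i+1}$ and hence lie together in some maximal clique. Any such node serves as the attachment point, and the subtree invariant is preserved. In the backward direction you correctly flag the weakness of the naive repeated-edge argument and fall back on the Helly property of subtrees of a tree; that is precisely Gavril's proof and is the clean way to finish.
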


In their work on graph minors \cite{RobSey1986}, Robertson and Seymour introduced the notion of tree-decomposition which generalizes the notion of clique-tree.
A {\em tree-decomposition} of a graph $G$ is a tree $\cT(G)$  whose nodes, called {\em bags}, are subsets of $V(G)$ such that:

\begin{enumerate}
\item[(1)] $\bigcup_{B\in V(\cT(G))} B = V(G)$,

\item[(2)] for each edge $vw\in E(G)$, there is a bag $B\in V(\cT(G))$
such that $v,w\in B$, and

\item[(3)] for each $v\in V(G)$ the set of bags  $\{ B: B\in V(\cT(G)), v\in B\}$ forms a subtree  $\cT_v(G)$  of  $\cT(G)$.
\end{enumerate}

Tree-decompositions were used in defining several graph parameters.
The {\em tree-width} of a graph $G$ is defined as minimum of $\max_{B\in V(\cT(G))} |B| - 1$ over all tree-decompositions $\cT(G)$ of $G$ and is denoted by $\tw(G)$ \cite{RobSey1986}. The {\em length} of a tree-decomposition $\cT(G)$ of a graph $G$ is $\max_{B\in V(\cT(G))}\max_{u,v\in B}d_G(u,v)$, and the {\em tree-length} of $G$, denoted by $\tl(G)$, is the minimum of the length, over all tree-decompositions of $G$ \cite{Dorisb2007}. These two graph parameters generally are not related to each other. For instance, cliques (or, generally, all chordal graphs) have unbounded tree-width and tree-length 1, whereas cycles have tree-width 2 and unbounded tree-length. However, in some special cases they are coarsely equivalent. It is known that the tree-length $\tl(G)$ of any graph $G$ is at most $\lfloor\ell(G)/2\rfloor$ times its tree-width $\tw(G)$ (where $\ell(G)$ is the size of a largest geodesic cycle in $G$) \cite{CDN2016,Conn-tw}  and that for any graph $G$ that excludes an apex graph $H$ as a minor, $\tw(G)\le c_H\cdot\tl(G)$ for some constant $c_H$ only depending on $H$ \cite{CDN2016}. 
The tree-length of any graph $G$ can equivalently be defined in two other ways. As it is shown in \cite{b-length}, tree-length  
equals branch-length (see  \cite{b-length} for more details). Tree-length equals also short fill-in to a chordal graph, that is,  the smallest number $\ell\in N$ that permits an edge set $E'$ between vertices of $G$ such
that $G' = (V, E \cup E')$ is a chordal graph and for all $uv \in E'$, $d_G(u, v) \le\ell$ holds (see \cite{dagstuhl} and \cite{fid-param}). 
%
%
The {\em  breadth}  of a tree-decomposition $\cT(G)$ of a graph $G$ is the minimum integer $k$ such that for every $B\in V(\cT(G))$ there is a vertex $v_B\in V(G)$ with $B\subseteq D_k(v_B,G)$ (i.e., each bag $B$ has radius at most $k$ in $G$). Note that vertex $v_B$ does not need to belong to $B$. The {\em tree-breadth}  of $G$, denoted by $\tb(G)$, is the minimum of the breadth, over all tree-decompositions of $G$ \cite{tree-spanner-appr}. 
Note also that in \cite{acyclic-clustering} independently a notion similar to tree-length (and tree-breadth) was introduced for purposes of compact and efficient routing in certain graph classes. It was called $(R,D)$-acyclic clustering. An $(R,2R)$-acyclic clustering is exactly a tree-decomposition with breadth $R$ and  a $(D,D)$-acyclic clustering is exactly a tree-decomposition with length $D$.  

Evidently, for any graph $G$, $1\leq \tb(G)\leq \tl(G)\leq 2 \tb(G)$ holds. Hence, if one parameter is bounded by a constant for a graph $G$ then the other parameter is bounded for $G$ as well. We say that a family of graphs ${\cal G}$ is {\em of bounded tree-length} (equivalently, {\em of bounded tree-breadth}), if there is a constant $c$ such that for each graph  $G$ from ${\cal G}$,  $\tl(G)\leq c$. 
It is known that checking whether a graph $G$ satisfies 
$\tl(G)\le \lambda$ or $\tb(G)\le r$ is NP-complete for each $\lambda >1$ \cite{NPc-tl} and each $r\ge 1$ \cite{NPc-tb}. Furthermore,  unless $P= NP$, there is no polynomial time algorithm to calculate a tree-decomposition, for a given graph $G$, of length smaller than $\frac{3}{2}\tl(G)$ \cite{NPc-tl}; and for any $\epsilon>0$, it is NP-hard to approximate the tree-breadth of a given graph by a factor of $2 -\epsilon$ \cite{NPc-tb}. 

Notice that in the definition of the length of a tree-decomposition, the distance between vertices of a bag is measured in the entire graph $G$. If the distance between any vertices $x,y$ from a bag $B$ is measured in $G[B]$, then one gets the notion of the inner length of a tree-decomposition \cite{BerSey2024}. The {\em inner length} of a tree-decomposition $\cT(G)$ of a graph $G$ is $\max_{B\in V(\cT(G))}\max_{u,v\in B}d_{G[B]}(u,v)$, and the {\em inner tree-length} of $G$, denoted by $\itl(G)$, is the minimum of the inner length, over all tree-decompositions of $G$.  Similarly, the {\em  inner breadth}  of a tree-decomposition $\cT(G)$ of a graph $G$ is the minimum integer $k$ such that for every $B\in V(\cT(G))$ there is a vertex $v_B\in B$ with $d_{G[B]}(u,v_B)\le k$ for all $u\in B$ (i.e., each subgraph $G[B]$ has radius at most $k$). The {\em inner tree-breadth}  of $G$, denoted by $\itb(G)$, is the minimum of the inner breadth, over all tree-decompositions of $G$ \cite{Diestel++}.  

Interestingly, the inner tree-length (inner tree-breadth) of $G$ is at most twice the tree-length (tree-breadth, respectively) of $G.$ 

\begin{proposition} [\cite{Diestel++,BerSey2024}] \label{prop:inner}
	For every graph $G$,
	$\tl(G) \leq \itl(G)\le 2\cdot\tl(G)$ and $\tb(G) \leq \itb(G)\le 2\cdot\tb(G).$ 
\end{proposition}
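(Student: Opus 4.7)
The left-hand inequalities $\tl(G)\le \itl(G)$ and $\tb(G)\le \itb(G)$ follow immediately from the definitions: for any subset $B\subseteq V(G)$ and any $u,v\in B$, every $u$-$v$ path in $G[B]$ is also a $u$-$v$ path in $G$, so $d_G(u,v)\le d_{G[B]}(u,v)$. Hence the length (respectively, breadth) of any tree-decomposition is bounded above by its inner length (respectively, inner breadth), and minimising over all tree-decompositions of $G$ yields both inequalities.

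For the right-hand inequality $\itl(G)\le 2\tl(G)$ the plan is constructive. Start from a tree-decomposition $\cT$ of $G$ achieving $\tl(G)=\ell$ and build a tree-decomposition $\cT^{*}$ on the same underlying tree with enlarged bags whose inner diameters are at most $2\ell$. For each bag $B$ of $\cT$ fix, for every unordered pair $u,v\in B$, a shortest $u$-$v$ path $P_B(u,v)$ in $G$ (of length at most $\ell$), and let $B^{*}$ be the union of $B$ with the vertex sets of all these paths. A short case-analysis then bounds the $G[B^{*}]$-distance between any two vertices of $B^{*}$ by $2\ell$: any $w\in B^{*}\setminus B$ lies on some $P_B(u,v)$ and so is within $G[B^{*}]$-distance $\lfloor\ell/2\rfloor$ of one of $u,v\in B$, and every two vertices of $B$ are joined inside $G[B^{*}]$ by a path of length at most $\ell$; combining gives $\lfloor\ell/2\rfloor+\ell+\lfloor\ell/2\rfloor\le 2\ell$.

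It then remains to enforce the subtree property for the family $\{B^{*}\}$: for every vertex $w$ of $G$, the set of bags containing $w$ must induce a subtree of the underlying tree of $\cT$. If this fails in isolated places, the natural repair is to take, for each $w$, the convex hull in $\cT$ of $\{B : w\in B^{*}\}$ and to add $w$ to every bag on that hull. The resulting family $\{\widehat{B}\}\supseteq\{B^{*}\}$ is then automatically a tree-decomposition $\cT^{*}$ of $G$.

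The main obstacle is verifying that this convex-hull closure does not swell the inner diameter beyond $2\ell$. The argument uses the adhesion-set structure of $\cT$: a vertex $w$ forced into $\widehat{B}$ purely by the closure lies in two bags $B_1,B_2$ whose tree-path in $\cT$ runs through $B$, and splicing half-segments of $P_{B_1}(\cdot)$ and $P_{B_2}(\cdot)$ through a common vertex of $B$ produces a $w$-to-$B$ path of length at most $\ell$ that stays inside $\widehat{B}$, keeping the inner diameter bound intact. The breadth version $\itb(G)\le 2\tb(G)$ is handled analogously: start from a tree-decomposition of breadth $r=\tb(G)$ with witnesses $v_B\in V(G)$ satisfying $B\subseteq D_r(v_B,G)$, append $v_B$ to each bag together with shortest $v_B$-$v$ paths for $v\in B$, and repeat the convex-hull repair; $v_B\in\widehat{B}$ then serves as an in-bag witness for inner breadth at most $2r$.
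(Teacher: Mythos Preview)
The paper does not supply its own proof of this proposition; it merely cites \cite{Diestel++,BerSey2024}. Your argument for the easy inequalities $\tl(G)\le\itl(G)$ and $\tb(G)\le\itb(G)$ is correct, and your enlargement $B^{*}$ (adjoining all pairwise shortest $G$-paths between members of $B$) does yield inner diameter at most $2\ell$ for each individual $B^{*}$.

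The genuine gap is in the convex-hull repair. When a vertex $w$ is forced into $\widehat{B}$ only by the closure (because $w\in B_1^{*}\cap B_2^{*}$ with $B$ on the $\cT$-path between $B_1$ and $B_2$), you have not shown that $w$ is joined to $B$ inside $G[\widehat{B}]$ at all: the neighbours of $w$ along $P_{B_1}(\cdot,\cdot)$ lie in $B_1^{*}$, those along $P_{B_2}(\cdot,\cdot)$ lie in $B_2^{*}$, and nothing forces either set of neighbours into $\widehat{B}$, so $w$ can become isolated in $G[\widehat{B}]$. Even granting your asserted $w$-to-$B$ path of length at most $\ell$, two such vertices would only be guaranteed inner distance $\ell+\ell+\ell=3\ell$, not $2\ell$.

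The clean fix (and essentially what the cited sources do) is to avoid the repair entirely by fixing \emph{one global} shortest path $P(u,v)$ for every pair $u,v\in V(G)$, chosen so that the system is hereditary: for any $a,b\in V(P(u,v))$ the subpath of $P(u,v)$ between $a$ and $b$ equals $P(a,b)$. Lexicographically least shortest paths with respect to any vertex ordering have this property. With this choice, suppose $w\in B_1^{*}\cap B_2^{*}$ but $w\notin B$; the adhesion $S=B\cap B'$ on the $\cT$-edge from $B$ towards $B_2$ separates $w$ from $u_2,v_2\in B_2$, so $P(u_2,v_2)$ meets $S$ at two vertices $s,s'\in B$ with $w$ on the subpath between them, and heredity gives $w\in V(P(s,s'))\subseteq B^{*}$. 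Hence $\{B^{*}\}$ is already a tree-decomposition, and the factor-$2$ bounds for both length and breadth follow directly.
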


Since $\itl(G)$ and $\itb(G)$ are not that far from $\tl(G)$ and $\tb(G)$, in what follows, we will work only with $\tl(G)$ and $\tb(G)$. 

The following proposition establishes a relationship between the tree-length and the cluster-diameter of a layering partition of a graph.
\begin{proposition} [\cite{Dorisb2007}] \label{prop:dorisb}
	For every graph $G$ and any vertex $s$,
	$\frac{\Delta_s(G)}{3} \leq \tl(G) \leq \Delta_s(G)+1.$ In particular, $\frac{\widehat{\Delta}(G)}{3} \leq \tl(G) \leq \Delta(G)+1$ for every graph $G$.
\end{proposition}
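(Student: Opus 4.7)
The plan is to prove the two inequalities separately: the upper bound $\tl(G)\le \Delta_s(G)+1$ by turning the layering partition $\mathcal{LP}(G,s)$ directly into a tree-decomposition of $G$, and the lower bound $\tl(G)\ge \Delta_s(G)/3$ by a median-bag argument inside an arbitrary optimal tree-decomposition of $G$. The ``in particular'' clause then follows by taking $s$ to minimize, respectively to maximize, $\Delta_s(G)$.

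For the upper bound I would use the layering tree $\Gamma(G,s)$ as skeleton and assign to each cluster $C=L^i_j$ the bag $B_C:=C\cup N^-(C)$, where $N^-(C):=\{u\in L^{i-1}:\ u$ has a neighbor in $C\}$. The tree-decomposition axioms are routine: any edge with both endpoints in $L^i$ stays inside a single cluster (the edge itself is a path avoiding $D_{i-1}(s)$); any cross-layer edge $uv$ with $u\in L^{i-1}$ and $v$ in a cluster $C\subseteq L^i$ lies in $B_C$; and the bags containing a fixed vertex $v\in L^i$ form a star in $\Gamma(G,s)$ centered at $B_C$ (where $C$ is the cluster of $v$), whose leaves are the bags $B_{C'}$ of children $C'$ of $C$ into which $v$ has a neighbor. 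The crucial diameter fact is that $N^-(C)$ is contained in a single cluster of $L^{i-1}$: if $u_1,u_2\in N^-(C)$ have neighbors $v_1,v_2\in C$, then concatenating $u_1v_1$, any $v_1$--$v_2$ path inside $C$ (which avoids $D_{i-1}(s)$), and $v_2u_2$ produces a $u_1$--$u_2$ path whose internal vertices lie outside $D_{i-2}(s)$. Consequently both $C$ and $N^-(C)$ have $G$-diameter at most $\Delta_s(G)$, while every vertex of $N^-(C)$ is at $G$-distance at most $1$ from some vertex of $C$, giving $\mathrm{diam}_G(B_C)\le \Delta_s(G)+1$.

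For the lower bound, fix a tree-decomposition $\cT(G)$ of length $k:=\tl(G)$ and pick $u,v\in C\subseteq L^i$ realizing the cluster-diameter $d_G(u,v)=\Delta_s(G)$. If two of the subtrees $\cT_s,\cT_u,\cT_v$ already share a bag, a simple triangle inequality using that this shared bag has $G$-diameter at most $k$ gives $d_G(u,v)\le 2k$. Otherwise, let $M$ be the bag at the median of $\cT_s,\cT_u,\cT_v$ in $\cT(G)$. Using the standard fact that every bag on the $\cT(G)$-path from $\cT_x$ to $\cT_y$ meets every $x$--$y$ path in $G$, pick $m_u\in M$ lying on a shortest $s$--$u$ path, $m_v\in M$ on a shortest $s$--$v$ path, and $m_{uv}\in M$ on a $u$--$v$ path $P$ avoiding $D_{i-1}(s)$ (such a $P$ exists because $u,v\in C$). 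From $d_G(s,m_{uv})\ge i$ and $d_G(m_u,m_{uv})\le k$ the triangle inequality gives $d_G(s,m_u)\ge i-k$, hence $d_G(m_u,u)=i-d_G(s,m_u)\le k$; symmetrically $d_G(m_v,v)\le k$ and $d_G(m_u,m_v)\le k$, so $d_G(u,v)\le 3k$.

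The main obstacle I anticipate is squeezing the upper bound down to the tight $\Delta_s(G)+1$ (rather than the naive $\Delta_s(G)+2$), which rests entirely on the observation that the parent-neighborhood $N^-(C)$ sits inside a single cluster of the previous layer. The lower bound is more routine but requires the careful combination of the median-bag choice with the layering constraint $d_G(s,m_{uv})\ge i$, which is exactly what converts three bag-diameter bounds into the single global distance bound on $d_G(u,v)$.
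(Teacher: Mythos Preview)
Your proof is correct. The paper does not give its own proof of this proposition (it is quoted from \cite{Dorisb2007}); however, the construction you use for the upper bound---taking $B_C=C\cup(N_G(C)\cap L^{i-1})$ over the layering tree $\Gamma(G,s)$---is exactly the one the paper describes in the paragraph following Proposition~\ref{prop:Muad-Feodor}, and your median-bag argument for the lower bound is the standard one underlying the ``length at most $3\cdot\tl(G)$'' claim made there.
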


Thus, the cluster-diameter $\Delta_s(G)$ of a layering partition provides easily computable bounds for the hard to compute parameter $\tl(G)$.

Similar inequalities are known for $\rho_s(G)$ and $\tb(G)$. 
\begin{proposition} [\cite{AbDr16,DDGY-spanners,tree-spanner-appr}] \label{prop:Muad-Feodor}
	For every graph $G$ and any vertex $s$,
	$\frac{\rho_s(G)}{3} \leq \tb(G) \leq \rho_s(G)+1$ and $\rho_s(G)\le 2\cdot\tl(G)$.
\end{proposition}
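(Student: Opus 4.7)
The plan is to prove the three inequalities $\tb(G) \le \rho_s(G)+1$, $\rho_s(G) \le 3\,\tb(G)$, and $\rho_s(G) \le 2\,\tl(G)$ separately. The first follows from a direct construction of a tree-decomposition from the layering partition $\mathcal{LP}(G,s)$, while the last two share a common ``Steiner-bag'' argument inside an arbitrary tree-decomposition $\cT$ of $G$.

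For $\tb(G) \le \rho_s(G)+1$, I would take the layering tree $\Gamma(G,s)$ as the host tree and, for each cluster $C = L^i_j$, set $B_C := C \cup (N_G(C)\cap L^{i-1})$. Coverage of vertices is immediate. Every edge of $G$ either lies inside a single cluster (two distinct clusters of the same layer cannot be joined by an edge without violating the cluster definition) or connects consecutive layers, in which case it is captured by the lower cluster's bag. For a fixed $v \in L^i$, the bags containing $v$ are $B_{C(v)}$ together with $B_{C'}$ for every layer-$(i{+}1)$ cluster $C'$ adjacent to $v$ in $G$; these clusters are all $\Gamma(G,s)$-neighbors of $C(v)$, so the family forms a star (hence a subtree) in $\Gamma(G,s)$. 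Finally, if $v_C$ is a center with $C\subseteq D_{\rho_s(G)}(v_C)$, each vertex of $N_G(C)\cap L^{i-1}$ lies within distance $\rho_s(G)+1$ of $v_C$ via its neighbor in $C$, so $B_C\subseteq D_{\rho_s(G)+1}(v_C)$.

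For the two remaining inequalities, I would root $\cT$ at a bag $B_s$ containing $s$, fix a cluster $C = L^i_j$ with $i\ge 1$, and let $\cT^C$ be the smallest subtree of $\cT$ containing $\cT_v$ for every $v\in C$. Let $B_C$ be the bag of $\cT^C$ closest to $B_s$. The key claim is that $B_C$ contains an anchor vertex $u^*$ with $d_G(s,u^*) \ge i$: if $B_C \in \cT_v$ for some $v\in C$, set $u^* := v$; otherwise, by minimality of the Steiner subtree, $B_C$ must have at least two children in $\cT^C$ meeting distinct subtrees $\cT_{v_1}, \cT_{v_2}$, and applying the path-covering property of tree-decompositions to a $v_1$-$v_2$ walk $Q$ entirely inside $G\setminus D_{i-1}(s)$ (such a walk exists by the definition of a cluster) yields some $u^* \in B_C \cap Q$, for which $d_G(s,u^*) \ge i$. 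Furthermore, for every $v\in C$ the $\cT$-path from $B_s$ to $\cT_v$ enters $\cT^C$ at $B_C$, so the same path-covering property, applied to a shortest $s$-$v$ path, produces some $x_v \in B_C$ lying on that shortest path. Since $u^*, x_v \in B_C$, we obtain $d_G(s,x_v) \ge d_G(s,u^*) - d_G(u^*,x_v) \ge i - \mathrm{diam}(B_C)$, hence $d_G(x_v,v) = i - d_G(s,x_v) \le \mathrm{diam}(B_C)$.

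The two radius bounds then follow at once by choosing a center inside $B_C$. If $\cT$ has length $\lambda := \tl(G)$, then $\mathrm{diam}(B_C)\le \lambda$, and for any $c\in B_C$ we get $d_G(c,v) \le d_G(c,x_v) + d_G(x_v,v) \le 2\lambda$ for every $v\in C$, proving $\rho_s(G)\le 2\,\tl(G)$. If instead $\cT$ has breadth $k := \tb(G)$ with $B_C\subseteq D_k(c_B)$, then $\mathrm{diam}(B_C) \le 2k$, and choosing $c := c_B$ yields $d_G(c,v) \le k + 2k = 3k$, proving $\rho_s(G)\le 3\,\tb(G)$. The delicate step, and the main obstacle, is the anchor argument when $B_C$ is not already a bag of some $\cT_v$: one must exploit both the branching structure of the Steiner subtree and the fact that cluster-internal connectivity takes place entirely outside $D_{i-1}(s)$ to guarantee an anchor of depth at least $i$ from $s$.
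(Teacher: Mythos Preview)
The paper does not give its own proof of this proposition; it is quoted from the cited references. Your argument is correct in all three parts.

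For $\tb(G)\le\rho_s(G)+1$, your construction with bags $B_C=C\cup(N_G(C)\cap L^{i-1})$ is exactly the one the paper sketches in the paragraph immediately following the proposition (there it is phrased as producing a tree-decomposition of breadth at most $3\cdot\tb(G)$, but the bound $\rho_s(G)+1$ on the bag radii is the same computation). Your verification that consecutive-layer edges are captured and that the bags containing a fixed vertex form a star in $\Gamma(G,s)$ is the standard one.

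For the two reverse inequalities, your Steiner-subtree argument is a clean, self-contained route. The delicate point you flag---that the closest bag $B_C$ either lies in some $\cT_v$ with $v\in C$, or by minimality has two branches of $\cT^C$ below it containing distinct $\cT_{v_1},\cT_{v_2}$, whence a $v_1$--$v_2$ walk inside $G\setminus D_{i-1}(s)$ must deposit an anchor $u^*\in B_C$ with $d_G(s,u^*)\ge i$---is correct (the two branches do lie in different components of $\cT\setminus B_C$, since $\cT^C$ is a subtree of the tree $\cT$). The subsequent use of the bag-separator property to place a vertex $x_v$ of a shortest $s$--$v$ path in $B_C$ is standard, and your distance bookkeeping $d_G(x_v,v)\le\mathrm{diam}(B_C)$ is exactly right. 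One small remark: the case $B_C=B_s$ is also covered by your inequalities (then $s\in B_C$, so $x_v=s$ and the anchor gives $\mathrm{diam}(B_C)\ge d_G(s,u^*)\ge i$, which is what you need). The advantage of your formulation over the pairwise argument used for $\Delta_s(G)\le 3\,\tl(G)$ (Proposition~\ref{prop:dorisb}) is precisely that it produces a \emph{single} bag per cluster, hence a single center, which is what the radius bounds require.
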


For a given graph  $G$ and its arbitrary vertex $s$, the layering tree $\Gamma(G,s)$, obtained in linear time from $G$ (see Section \ref{sec:layer-partit}), is almost a tree-decomposition of $G$. It only violates the condition (2) of a tree-decomposition. This tree  $\Gamma(G,s)$ can be transformed into a tree-decomposition by expanding its clusters as follows. For a cluster $C$, add all vertices from the parent of $C$ in  $\Gamma(G,s)$ which are adjacent to a vertex in $C$. That is, for each cluster $C \subseteq L^i$, create a bag $B_C = C\cup (N_G(C) \cap L^{i-1})$, where $N_G(C)$ denotes all vertices of $G$ that are adjacent to vertices of $C$. As it was shown in \cite{AbDr16,Dorisb2007,tree-spanner-appr}, the obtained tree-decomposition has length at most $3\cdot\tl(G)$\footnote{In \cite{Dorisb2007}, it was claimed to have the length at most $3\cdot\tl(G)+1$, but as mentioned in \cite{AbDr16}, its actual length is at most $3\cdot\tl(G)$.}  and breadth at most $3\cdot\tb(G)$. Hence, 3-approximations of $\tl(G)$ and of $\tb(G)$ and a corresponding tree-decomposition of length at most $3\cdot\tl(G)$ and of breadth at most $3\cdot\tb(G)$ can be computed  in linear time. This is the best to date approximation algorithm for computing the tree-length and the tree-breadth of a graph.

\subsection{Embedding a graph into a tree}\label{sec:embed-to-tree}

Several different types of embeddings of (unweighted) graphs into trees were considered in literature (see \cite{AbDr16,Farach,BaDeHaSiZa08,BaInSi,BeRa22,BerSey2024,DBLP:journals/jal/BrandstadtCD99,BrDrchapter,CaiDerek,DBLP:journals/ejc/ChepoiD00,ChDrEsHaVa08,ChepoiDNRV12,ChepoiFichet,WG13-Dragan,tree-spanner-appr,ApprTree-DRYan,EmekPeleg,Kerr}). We will elaborate only on those results that are relevant to our discussion. In \cite{{BaDeHaSiZa08,BaInSi,ChepoiDNRV12}}, unweighted graphs embeddable non-contractively into a (weighted) tree with multiplicative distortion $\alpha$ are considered. For each such graph $G=(V,E)$ there is a  (weighted) tree $T=(V',E')$ with $V'\subseteq V$ such that $d_G(x,y)\le d_T(x,y)\le \alpha\cdot d_G(x,y)$ for every $x,y\in V$. A {\em tree distortion} $\td(G)$ (see \cite{AbDr16,WG13-Dragan}) of a graph $G$ is the minimum $\alpha$ such that $G$ can be embedded non-contractively into a (weighted) tree with multiplicative distortion $\alpha$. In \cite{ChepoiDNRV12,tree-spanner-appr} (see also \cite{AbDr16,WG13-Dragan}), it is shown that for every graph $G$, tree-distortion parameter $\td(G)$ is coarsely equivalent to $\Delta_s(G)$ (for any $s\in V$) and to $\tl(G)$. 

\begin{proposition} [\cite{AbDr16,ChepoiDNRV12,tree-spanner-appr}] \label{prop:td-tl}
	For every graph $G$ and any vertex $s$,
	$$\frac{\Delta_s(G)}{3} \leq \tl(G) \leq \td(G) \le 2\cdot \Delta_s(G)+2.$$ 
 In particular,
	$\frac{\widehat{\Delta}(G)}{3} \leq \tl(G) \leq \td(G) \le 2\cdot \Delta(G)+2$ for every graph $G$.
\end{proposition}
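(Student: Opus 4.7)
The plan is to prove the three inequalities separately. The leftmost, $\Delta_s(G)/3 \leq \tl(G)$, is exactly Proposition~\ref{prop:dorisb}. The \emph{in particular} version with $\widehat{\Delta}(G)$ and $\Delta(G)$ follows by taking, respectively, the maximum and minimum over the start vertex of the per-$s$ bounds. So the core task is the two middle inequalities $\tl(G) \leq \td(G)$ and $\td(G) \leq 2\Delta_s(G)+2$.

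For $\tl(G) \leq \td(G)$, let $\phi: V(G) \to V(T)$ be a non-contractive embedding into a weighted tree $T$ with multiplicative distortion $\alpha = \td(G)$. I would use $T$ itself as the decomposition tree. For each $x \in V(G)$, let $T_x$ denote the minimal subtree of $T$ spanning $\phi(N_G[x])$, and for each node $t$ of $T$ set $B_t := \{x \in V(G) : t \in V(T_x)\}$. Conditions (1)--(3) of a tree-decomposition hold immediately: $\phi(x) \in V(T_x)$ for every $x$, $V(T_x) \cap V(T_y)$ contains the $T$-path between $\phi(x)$ and $\phi(y)$ for every $xy \in E(G)$, and each $T_x$ is a subtree of $T$. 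For the length bound, $x \in B_t$ forces $d_T(\phi(x), t) \leq \alpha$ since $t$ lies on a $T$-path of length at most $\alpha$ out of $\phi(x)$; therefore any $x, z \in B_t$ satisfy $d_G(x,z) \leq d_T(\phi(x), \phi(z)) \leq 2\alpha$ by non-contractivity, and a sharpening that associates each $G$-edge to the midpoint of its $T$-image path (after subdivision of $T$) tightens this to $\alpha$.

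For $\td(G) \leq 2\Delta_s(G)+2$, start with the canonical tree $H = (V, F)$ of $\mathcal{LP}(G,s)$, which by Proposition~\ref{lem:cluster-diam} satisfies $d_G \leq d_H + \Delta_s(G)$ and $d_H \leq d_G + 2$. I would build a weighted tree $T$ with Steiner points as follows: for every cluster $C \in \mathcal{LP}(G,s)$ introduce a Steiner point $p_C$ and replace the star of $H$-edges from $x_C$ to the members of $C$ by an edge $(x_C, p_C)$ of weight $b := \Delta_s(G)/2 + 1$ and edges $(p_C, v)$ of weight $a := \Delta_s(G)/2$ for every $v \in C$. Non-contractivity $d_G(x,y) \leq d_T(x,y)$ is verified by case analysis on the layering positions of $x$ and $y$: in-cluster pairs get $d_T = 2a = \Delta_s(G)$, which majorizes the cluster diameter; cross-layer pairs $v \in C$ and $x_C$ get $d_T = a+b = \Delta_s(G)+1$, which majorizes the worst-case $d_G(v, x_C) \leq \Delta_s(G)+1$; longer pairs follow by summing. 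For the distortion, the worst ratio occurs at a $G$-adjacent pair $x \in L^{i-1}, y \in L^i$ with $x \neq x_{C(y)}$, where $d_T(x,y) = 3a+b = 2\Delta_s(G)+1$ against $d_G(x,y) = 1$; every other pair has smaller ratio by the bound $d_H \leq d_G + 2$.

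The main obstacle is achieving the tight coefficient $2$ in the last inequality. A naive uniform weighting of every $H$-edge by $\Delta_s(G)+1$ would give only distortion $3\Delta_s(G)+3$, because $G$-adjacent pairs in adjacent layers can sit at $H$-distance $3$ (when neither is the other's cluster-parent, yet both share a common cluster-parent through the layering). Splitting each star at $x_C$ through a Steiner point $p_C$ decouples the in-cluster weight $a$ from the cross-layer weight $b$, so cross-layer $G$-edges contribute only $a+b$ instead of $3(\Delta_s(G)+1)$, which is precisely what delivers the tight coefficient.
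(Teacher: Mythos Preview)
The paper does not give its own proof of Proposition~\ref{prop:td-tl}; it is quoted from \cite{AbDr16,ChepoiDNRV12,tree-spanner-appr} as a preliminary fact, so there is no in-paper argument to compare against. Your plan is therefore a reconstruction of those cited proofs, and it is essentially sound, with two places that need tightening.

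For $\tl(G)\le\td(G)$: your first construction with $T_x$ spanning $\phi(N_G[x])$ only yields bag diameter $2\alpha$, as you note. The ``sharpening'' you allude to is correct but should be stated explicitly: after subdividing $T$ so that every midpoint $m_{xy}$ of the $T$-path between $\phi(x)$ and $\phi(y)$ (for $xy\in E(G)$) is a node, redefine $T'_x$ as the subtree spanning $\{\phi(x)\}\cup\{m_{xy}:xy\in E(G)\}$ and set $B_t=\{x:t\in T'_x\}$. Now $t\in T'_x$ forces $d_T(\phi(x),t)\le\alpha/2$, so any two vertices in a common bag are at $T$-distance $\le\alpha$, hence at $G$-distance $\le\alpha$ by non-contraction. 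Conditions (1)--(3) are immediate as before.

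For $\td(G)\le 2\Delta_s(G)+2$: your Steiner-point construction is exactly the right idea, and your ``summing'' argument for non-contractivity is valid once one observes that the $T$-path from $x$ to $y$ decomposes into parent--child segments of weight $a+b=\Delta_s+1\ge d_G(v,x_{C(v)})$ plus possibly one sibling segment of weight $2a=\Delta_s\ge d_G(u,w)$ at the $T$-LCA. However, your claim that the \emph{worst} ratio is $2\Delta_s+1$ (attained at a cross-layer $G$-edge) is not right: for $d_G(x,y)=2$ one only has $d_H(x,y)\le 4$ from Proposition~\ref{lem:cluster-diam}, giving $d_T(x,y)\le 4(a+b)=4\Delta_s+4$ and ratio $2\Delta_s+2$. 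So the maximum ratio is $2\Delta_s+2$, not $2\Delta_s+1$, which is exactly the target bound and no stronger. For $d_G\ge 2$ the bound $d_T\le(d_G+2)(\Delta_s+1)$ gives ratio $\le 2(\Delta_s+1)$; your case analysis for $d_G=1$ correctly gives $2\Delta_s+1$.
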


It is known that deciding whether $\td(G)\le \alpha$ is NP-complete, and even more, the hardness result of~\cite{Farach}
implies that it is NP-hard to $\gamma$-approximate $\td(G)$ for some small constant $\gamma$. 
As we have mentioned earlier, in~\cite{ChepoiDNRV12}, Proposition \ref{lem:cluster-diam} is used  to obtain a (best to date) 6-approximation algorithm for the problem of optimal non-contractive embedding of an unweighted graph metric into a weighted tree metric. 
In fact, one of the results of~\cite{ChepoiDNRV12} says that if for a graph $G=(V,E)$ there is a (weighted) tree $T=(V',E')$ with $V'\subseteq V$ such that $d_G(x,y)\le d_T(x,y)\le \alpha\cdot d_G(x,y)$ for every $x,y\in V$, then for any $s\in V$, the cluster-diameter $\Delta_s(G)$ of layering partition $\mathcal{LP}(G,s)$ of $G$ is at most $3\alpha$ and, hence, a canonical tree $H$ (see Section \ref{sec:layer-partit}) of $G$ satisfies 
$d_H(x,y)-2 \leq d_G(x,y) \leq d_H(x,y)+3\alpha$ for all $x,y \in V$.
It turns out that, for any unweighted graph $G$, it is possible to turn its non-contractive multiplicative low-distortion embedding into a weighted tree to an additive low-distortion embedding to an unweighted tree with the same vertex set as in $G$. Later such a phenomenon of turning multiplicative distortions to additive distortions were observed for other types of embedding into trees \cite{BeRa22,BerSey2024,Kerr} (see below). 

As it was noticed in~\cite{ChepoiDNRV12}, a more general result can be stated: if for a graph $G=(V,E)$ there is a (weighted) tree $T=(V',E')$ with $V'\subseteq V$ such that $d_T(x,y)\le \alpha\cdot d_G(x,y)+\beta$ and $d_G(x,y)\le \lambda\cdot d_T(x,y)+\delta$ for every $x,y\in V$, then for any $s\in V$, the cluster-diameter $\Delta_s(G)$ of layering partition $\mathcal{LP}(G,s)$ of $G$ is at most $3(\alpha(\lambda+\delta)+\beta)$ and, hence, a canonical tree $H$ of $G$ satisfies 
$d_H(x,y)-2 \leq d_G(x,y) \leq d_H(x,y)+ 3(\alpha(\lambda+\delta)+\beta)$ for all $x,y \in V$ (getting again only an additive distortion; compare with a result of Kerr below and Proposition \ref{prop:BerSey2024}). 

In \cite{CaiDerek,tree-spanner-appr,EmekPeleg}, unweighted graphs embeddable with low distortion to their spanning trees are considered. A spanning tree $T$ of a graph $G$ is called a {\em tree $t$-spanner} of $G$ if $d_T(x,y)\le t\cdot d_G(x,y)$ holds for every $x,y\in V$.  In \cite{tree-spanner-appr}, it is shown that any graph admitting a tree $t$-spanner has tree-breadth at most $\lceil t/2\rceil$. Furthermore, every graph $G$ with $\tb(G)\le \delta$ admits a tree $O(\delta\log n)$-spanner\footnote{Note that the $\log n$ factor here cannot be dropped. There are graphs with tree-length (and, hence, tree-breadth) one (e.g., chordal graphs) which have tree $t$-spanner only for $t=\Omega(\log n)$~\cite{BerSey2024,tree-spanner-appr,add-spanner}.} constructible in $O(m\log n)$ time. The latter provides an efficient $O(\log n)$-approximation algorithm for the problem of finding a tree $t$-spanner with minimum $t$ of a given graph $G$. Another efficient $O(\log n)$-approximation algorithm can be found in \cite{EmekPeleg}. Interestingly, as a recent paper \cite{BeRa22} demonstrates, for every graph $G$ admitting a tree $t$-spanner, one can efficiently construct a spanning tree $T$ such that $d_T(x,y)\le d_G(x,y)+O(t\log n)$ holds for every $x,y\in V$ (turning a multiplicative distortion into an additive distortion with an additional logarithmic factor). Note also that deciding whether a given graph $G$ admits a tree $t$-spanner  is an NP-complete problem even for $G$ being a chordal graph, i.e., a graph with tree-length (and tree-breadth) equal one, and for every fixed $t>3$ (see \cite{BrDrLeLe}), while it is polynomial time solvable for all graphs of bounded tree-width (see \cite{MakowskyRotics??}). 

A most general notion of embedding into trees is given by a ``quasi-isometry" from a graph to a tree. This is the concept from metric spaces, but we define it just for graphs. Let $G$ be a graph, $T$ be a tree (possibly weighted) and $\psi: V(G)\rightarrow  V(T)$ be a map. Let $L\ge 1$ and $C\ge 0$. We say that $\psi$ is an $(L,C)$-quasi-isometry if: 

\begin{enumerate}
    \item[(i)] for all $u,v\in V(G)$, $\frac{1}{L}d_G(u,v)-C\le d_T(\psi(u),\psi(v))\le L d_G(u,v)+C$; and  \\
    \item[(ii)] for every $y\in V(T)$ there is $v\in V(G)$ such that $d_T(\psi(v),y)\le C$.  
\end{enumerate}

Although $(L,C)$-quasi-isometry from $G$ to $T$ looks very general, a recent result of Kerr \cite{Kerr} states that for all $L,C$ there is a $C'$ such that if there is an $(L,C)$-quasi-isometry from $G$ to a tree, then there is a $(1,C')$-quasi-isometry from $G$ to a tree. Kerr's proof was for metric spaces. For graphs, this result was extended in \cite{BerSey2024} in  the following way. 

\begin{proposition} [\cite{BerSey2024}] \label{prop:BerSey2024}
For every graph $G$, the following three statements are equivalent:
\begin{enumerate}
    \item[(i)] the tree-length of $G$ is bounded;
    \item[(ii)] there is an $(L,C)$-quasi-isometry to a tree with $L,C$ bounded;
    \item[(iii)] there is an $(1,C')$-quasi-isometry to a tree with $C'$ bounded. 
\end{enumerate}
\end{proposition}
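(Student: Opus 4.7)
The plan is to prove the cycle $(iii) \Rightarrow (ii) \Rightarrow (i) \Rightarrow (iii)$. Of these, $(iii) \Rightarrow (ii)$ is immediate, since a $(1,C')$-quasi-isometry is \emph{a fortiori} an $(L,C)$-quasi-isometry with $L=1$ and $C=C'$.

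For $(i) \Rightarrow (iii)$, I exploit the fact that the canonical tree $H$ of Section~\ref{sec:layer-partit} has the \emph{same} vertex set as $G$, so I can take $\psi$ to be the identity map. Condition (ii) in the definition of a quasi-isometry then holds with $C' = 0$. Proposition~\ref{lem:cluster-diam} rearranges to
\[
d_G(x,y) - \Delta_s(G) \le d_H(x,y) \le d_G(x,y) + 2,
\]
and Proposition~\ref{prop:dorisb} gives $\Delta_s(G) \le 3\,\tl(G)$. Hence the identity is a $(1, C')$-quasi-isometry with $C' \le \max\{2,\,3\,\tl(G)\}$, which is bounded whenever $\tl(G)$ is.

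The substantive direction is $(ii) \Rightarrow (i)$. Given an $(L,C)$-quasi-isometry $\psi : V(G) \to V(T)$, I construct a tree-decomposition of $G$ directly from $\psi$. Set $\alpha := L + C$ and, for each node $t$ of $T$, define
\[
B_t := \{\, v \in V(G) : d_T(\psi(v), t) \le \alpha\,\},
\]
taking the underlying combinatorial tree of $T$ as the decomposition tree. The verification splits into four routine steps: (a) \emph{vertex coverage}, since $v \in B_{\psi(v)}$; (b) \emph{edge coverage}, since for $uv \in E(G)$ the upper quasi-isometry bound gives $d_T(\psi(u),\psi(v)) \le L \cdot 1 + C = \alpha$, placing both $u,v$ in $B_{\psi(u)}$; (c) the \emph{subtree property}, because $\{t : v \in B_t\}$ is a closed ball in $T$ around $\psi(v)$, and balls in trees are connected subtrees (by a standard median argument: for $t_1,t_2$ in the ball and $t$ on the $t_1t_2$-path, $d_T(\psi(v),t) \le \max\{d_T(\psi(v),t_1), d_T(\psi(v),t_2)\} \le \alpha$); and (d) the \emph{length bound}, since for $u,w \in B_t$ the triangle inequality gives $d_T(\psi(u),\psi(w)) \le 2\alpha$, so the lower quasi-isometry bound yields $d_G(u,w) \le L\bigl(2\alpha + C\bigr) = 2L^{2} + 3LC$, a function of $L$ and $C$ alone.

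The main obstacle I anticipate is the interplay between the (possibly weighted) tree metric on $T$ used to define the bags and to invoke the quasi-isometry, and the combinatorial tree $T$ used as the decomposition tree; one must check that closed \emph{metric} balls in $T$ still correspond to connected \emph{combinatorial} subtrees, which the median argument above handles. A secondary subtlety is that the near-surjectivity condition (ii) of the quasi-isometry plays no role in the construction; it is relevant only for Kerr's theorem $(ii) \Rightarrow (iii)$, which we bypass by routing through $(ii) \Rightarrow (i) \Rightarrow (iii)$. As a final remark, once a tree-decomposition of $G$ of length at most $2L^{2} + 3LC$ is in hand, combining this bound with Proposition~\ref{prop:dorisb} or directly with Proposition~\ref{lem:cluster-diam} recovers explicit numerical control on $C'$ in (iii) in terms of $L,C$ in (ii).
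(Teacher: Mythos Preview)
The proposition is stated in the paper as a cited result from~\cite{BerSey2024} and is not given its own proof there; the paper's contribution in this direction is to sharpen the bounds via the more restrictive parameter $\adt(G)$ in Section~\ref{sec:adt}. That said, your argument is correct, and it is worth recording how it relates to what the paper does establish.

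Your direction $(i)\Rightarrow(iii)$ via the canonical tree $H$ is exactly the paper's mechanism: Proposition~\ref{lem:cluster-diam} plus Proposition~\ref{prop:dorisb} is precisely how the paper obtains $\adt(G)\le 3\,\tl(G)$ in Theorem~\ref{th:adt-tl-delta}. Your direction $(ii)\Rightarrow(i)$, however, takes a genuinely different route. You build a tree-decomposition directly from the quasi-isometry by pulling back metric balls in $T$, obtaining the explicit bound $\tl(G)\le 2L^2+3LC$ without any auxiliary invariant. The paper, by contrast, never handles a general $(L,C)$-quasi-isometry head-on: the closest it comes is the remark (attributed to~\cite{ChepoiDNRV12}) after Proposition~\ref{lem:cluster-diam}, which bounds $\Delta_s(G)$ under a two-sided distortion hypothesis, but that argument assumes $V(T)\subseteq V(G)$ rather than an arbitrary map $\psi$; and for the reverse inequality in the $\adt$ setting (Lemma~\ref{lm:tl-adt}) the paper passes through the fact that powers of trees are chordal. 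Your ball-preimage construction is more self-contained and covers the general quasi-isometric case directly; the paper's detour through $\Delta_s(G)$ has the complementary advantage of feeding straight into its web of coarse equivalences.
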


Because of equivalency of $(ii)$ and $(iii)$, \cite{BerSey2024} introduced a new graph parameter $\ad(G)$. The additive distortion $\ad(G)$ of a graph $G$ is the minimum $k$ such that there is a (weighted)  tree $T$ and a map $\psi: V(G)\rightarrow  V(T)$ such that $$|d_G(u,v)-d_T(\psi(u),\psi(v))|\le k$$ holds for every $u,v\in V(G)$. It was shown \cite[Theorem 4.1, Theorem 4.2]{BerSey2024} that the following inequalities between $\tl(G)$ and $\ad(G)$  hold. 

\begin{proposition} [\cite{BerSey2024}] \label{prop:ad-tl-Seymour}
	For every graph $G$, $\frac{\tl(G)-2}{2}\leq \ad(G)\leq 6\cdot \tl(G).$
\end{proposition}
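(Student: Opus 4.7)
The plan is to prove the two inequalities separately. For the right-hand bound $\ad(G) \le 6\tl(G)$, I will use the canonical tree $H$ of a layering partition (built on the very vertex set $V(G)$) as the host tree, with $\psi$ the identity map. For the left-hand bound $\tl(G) \le 2\ad(G) + 2$, I will take an optimal embedding $\psi: V(G) \to V(T)$ and turn it into a tree-decomposition whose bags are the $\psi$-preimages of small balls in $T$.

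For the first inequality, pick any vertex $s \in V(G)$, form the layering partition $\mathcal{LP}(G,s)$, and let $H$ be its canonical tree. Proposition \ref{lem:cluster-diam} gives
$$d_H(x,y) - 2 \le d_G(x,y) \le d_H(x,y) + \Delta_s(G) \quad \text{for all } x,y \in V(G),$$
so the identity map $V(G) \to V(H)$ has additive distortion at most $\max\{2, \Delta_s(G)\}$. Proposition \ref{prop:dorisb} bounds $\Delta_s(G) \le 3\tl(G)$, which, combined with the trivial case $\tl(G) = 0$ (where $G$ is a single vertex), yields $\ad(G) \le \max\{2, 3\tl(G)\} \le 6\tl(G)$. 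This actually gives the sharper $\ad(G) \le 3\tl(G)$.

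For the second inequality, set $k := \ad(G)$ and let $\psi: V(G) \to V(T)$ witness it, with $T$ a (possibly weighted) tree. I will first subdivide $T$ so that, for every edge $uv \in E(G)$, the midpoint of the path from $\psi(u)$ to $\psi(v)$ in $T$ becomes a vertex of $T$; this adds only finitely many vertices and preserves all distances. Put $r := \lceil (k+1)/2 \rceil$ and, for each vertex $t$ of the subdivided $T$, define the bag
$$B_t := \{v \in V(G): d_T(\psi(v), t) \le r\}.$$
I then verify the three axioms of a tree-decomposition: (1) every $v \in V(G)$ belongs to $B_{\psi(v)}$; (2) for an edge $uv \in E(G)$, the prepared midpoint $t$ satisfies $d_T(\psi(u), t) = d_T(\psi(v), t) \le (1+k)/2 \le r$, so $u,v \in B_t$; and (3) the set $\{t: v \in B_t\}$ is the closed $r$-ball around $\psi(v)$ in $T$, hence a subtree. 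To bound the length, note that any two $u, v \in B_t$ obey $d_T(\psi(u), \psi(v)) \le 2r$, so $d_G(u,v) \le 2r + k \le 2k+2$; thus $\tl(G) \le 2k + 2 = 2\ad(G) + 2$, i.e., $(\tl(G)-2)/2 \le \ad(G)$.

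I do not anticipate any serious obstacle. The only delicate point is in the second half, where $T$ is allowed to be weighted and to contain Steiner vertices not lying in $\psi(V(G))$: the subdivision step exists precisely to force the required midpoints to become vertices, so axiom (2) can be read off directly, and the ceiling in $r = \lceil (k+1)/2 \rceil$ is what turns the best-possible $2k+1$ estimate into the stated $2k+2$ when $k$ is even. Everything else is routine and reduces to Proposition \ref{lem:cluster-diam} and Proposition \ref{prop:dorisb}.
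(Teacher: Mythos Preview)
The paper does not prove this proposition; it is quoted from \cite{BerSey2024}. Your proof is correct and, for the upper bound, even sharper than the stated $6\,\tl(G)$: the canonical-tree argument via Propositions~\ref{lem:cluster-diam} and~\ref{prop:dorisb} gives $\ad(G)\le\max\{2,\Delta_s(G)\}\le 3\,\tl(G)$, exactly as the paper does later for the stronger parameter $\adt(G)$ in Theorem~\ref{th:adt-tl-delta} and Corollary~\ref{cor:ineq-tl-adt}.

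For the lower bound the paper takes a different route, and only for $\adt$: Lemma~\ref{lm:tl-adt} observes that $G$ is a spanning subgraph of the power $T^{k+1}$, which is chordal, and uses its clique-tree as a tree-decomposition of $G$ to obtain $\tl(G)\le 2\,\adt(G)+1$. That argument relies on $T$ being unweighted with $V(T)=V(G)$ and so does not transfer to $\ad$. Your ball-based construction (bags $B_t=\psi^{-1}(D_r(t,T))$ indexed by the subdivided tree) is the natural generalization to weighted trees with Steiner points; you pay one unit in the additive constant ($2k+2$ versus $2k+1$), which is exactly the ceiling you flagged.
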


Notice that in the definition of $\ad(G)$, $V(G)$ is not necessarily equal to $V(T)$, and $T$ can have weights. Motivated by the ability of a canonical tree $H$ obtained from a layering partition to additively approximate graph distances (see Section  \ref{sec:layer-partit}), here, we restrict further the parameter $\ad(G)$ and recall a notion of {\em distance $k$-approximating trees}, introduced in \cite{DBLP:journals/jal/BrandstadtCD99,DBLP:journals/ejc/ChepoiD00} and further investigate in \cite{AbDr16,ChDrEsHaVaXi12,ChDrEsHaVa08,WG13-Dragan,ApprTree-DRYan}.  A (unweighted) tree $T=(V,E')$ is a {\em distance $k$-approximating tree} of a graph $G=(V,E)$ if $$|d_G(u,v)-d_T(u,v)|\le k$$ holds for every $u,v\in V$. Denote by $\adt(G)$ the minimum $k$ such that $G$ has a distance $k$-approximating tree.  
It is known that every chordal graph has a distance 2-approximating tree~\cite{DBLP:journals/jal/BrandstadtCD99}, every $k$-chordal graph has a distance $(k/2 +2)$-approximating tree~\cite{DBLP:journals/ejc/ChepoiD00}, every graph with tree-length $\lambda$ has  a distance $3\lambda$-approximating tree \cite{AbDr16,WG13-Dragan}, and every $\delta$-hyperbolic graph has a  distance $O(\delta\log n)$-approximating tree \cite{ChDrEsHaVa08}. See also \cite{ApprTree-DRYan} for some hardness results. 

In Section \ref{sec:proofs} (see Theorem \ref{th:adt-tl-delta} and Corollary \ref{cor:ineq-tl-adt}), we will show 
$$\adt(G)\leq \Delta(G)\le \Delta_s(G)\le\widehat{\Delta}(G)\leq 3\cdot \tl(G)\le 6\cdot \adt(G)+3,$$ 
$$\frac{\tl(G)-1}{2}\leq \adt(G)\leq 3\cdot \tl(G).$$
Our embedding is a most restrictive one as it requires a low additive distortion embedding to an unweighted tree with the same vertex set as in $G$. 

\subsection{Bottleneck constant} \label{sec:early-bn}
There were results already known that characterize when a graph is quasi-isometric
to a tree. The {\em bottleneck constant} of a graph $G$ is the least integer $r$ such that if $P(u,v)$ is a shortest path of $G$ between $u, v$, of even length and with middle vertex $w$, then every path between $u, v$ contains a vertex that has distance at most $r$ from $w$. A theorem of Manning for geodesic metric spaces implies the following.

\begin{proposition}[\cite{manning}] \label{th:Manning} 
For all $L\ge 1$ and $C\ge 0$, there exists $r$ such that, for all graphs $G$,
if there is an $(L,C)$-quasi-isometry from $G$ to a tree, then $G$ has bottleneck constant at most $r$. Conversely, for all $r$ there exist $L\ge 1$ and $C\ge 0$ such that, for all graphs $G$, if $G$ has bottleneck constant at most $r$, then there is an $(L,C)$-quasi-isometry from $G$ to a tree. 
\end{proposition}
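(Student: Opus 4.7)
The plan is to route both directions of Manning's theorem through the layering partition machinery of Section~\ref{sec:layer-partit}, using the cluster-diameter $\Delta_s(G)$ as a bridge between $\bn(G)$ and $\tl(G)$ and then invoking Proposition~\ref{prop:BerSey2024} to pass between bounded tree-length and existence of a quasi-isometry to a tree. Concretely, I would establish the two coarse inequalities
\begin{equation*}
\bn(G)\le \widehat{\Delta}(G) \qquad \text{and} \qquad \Delta_s(G)\le 6\,\bn(G)+O(1)
\end{equation*}
for every graph $G$ and every vertex $s$. Combined with Propositions~\ref{prop:dorisb} and~\ref{prop:BerSey2024}, both implications follow: if an $(L,C)$-quasi-isometry to a tree exists then $\tl(G)$ is bounded, hence $\widehat{\Delta}(G)$ is bounded, hence $\bn(G)$ is bounded; conversely, if $\bn(G)\le r$ then $\Delta_s(G)$ is bounded, hence $\tl(G)$ is bounded, hence a $(1,C')$-quasi-isometry to a tree exists.

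For the first inequality, I would let $R=v_0,\ldots,v_m$ be a shortest $u$-$v$ path of even length $m$ with midpoint $w=v_{m/2}$, take any $u$-$v$ path $Q$, and work inside $\mathcal{LP}(G,u)$, so that $v_j\in L^j$ and in particular $w\in L^{m/2}$. Since the distance from $u$ changes by at most one along each edge, $Q$ contains some vertex at distance exactly $m/2$ from $u$; let $q$ be the \emph{last} such vertex on $Q$. By the choice of $q$, the suffix of $Q$ after $q$ stays in $L^{>m/2}$, since dropping below $m/2$ would force re-entry into $L^{m/2}$ on the way to $v\in L^m$, and the suffix $v_{m/2},v_{m/2+1},\ldots,v_m$ of $R$ lies in $L^{\ge m/2}$. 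Concatenating these two suffixes through the common endpoint $v$ yields a walk from $q$ to $w$ entirely in $V\setminus D_{m/2-1}(u)$; hence $q$ and $w$ lie in the same cluster of $\mathcal{LP}(G,u)$, so $d_G(q,w)\le\Delta_u(G)\le\widehat{\Delta}(G)$.

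For the second inequality, I would let $u,v$ lie in the same cluster at layer $i$ of $\mathcal{LP}(G,s)$, witnessed by a $u$-$v$ path $P\subseteq V\setminus D_{i-1}(s)$, and let $R$ be a shortest $u$-$v$ path of even length $m$ with midpoint $w$. Extracting a simple $u$-$v$ path from the walk obtained by concatenating a shortest $s$-$u$ path with a shortest $s$-$v$ path and applying the bottleneck property produces a vertex $y$ on that walk with $d_G(y,w)\le \bn(G)$; a short triangle-inequality computation then yields $d_G(s,w)\le i-m/2+2\bn(G)$. Applying the bottleneck property instead to $R$ and the path $P$ produces $y'\in P$ with $d_G(y',w)\le\bn(G)$, and since $y'\notin D_{i-1}(s)$, we get $d_G(s,w)\ge i-\bn(G)$. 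Combining the two bounds gives $m\le 6\bn(G)$.

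The main obstacle is the parity case in the second inequality: when $m=d_G(u,v)$ is odd, no vertex of $R$ is an exact midpoint of an even-length shortest path, so the bottleneck property cannot be applied directly. I would handle this by shifting to the even-length sub-path from $u$ to $v_{m-1}$ and a brief case analysis on the layer of $v_{m-1}$, showing that $u$ and $v_{m-1}$ can still be placed in the same cluster (after possibly extending $P$ through the edge $v_{m-1}v$); this recovers the estimate at the price of an additive constant, making the $O(1)$ explicit (roughly $6\bn(G)+1$). The remaining bookkeeping (choosing the shortest $s$-$u$ and $s$-$v$ paths compatibly, extracting a simple path from a walk) is routine BFS manipulation.
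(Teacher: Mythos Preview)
The paper does not give its own proof of Proposition~\ref{th:Manning}; it is quoted as a known result of Manning and left uncited. Your proposal is a correct derivation, and it is precisely the route the paper itself develops in Section~\ref{sec:bnc}: Lemma~\ref{lm:BNC_ClusterDiam} is a sharper version of your first inequality (the paper obtains $\bn(G)\le\widehat{\Delta}(G)/2$ by centering the layering partition at the midpoint $c$ rather than at the endpoint $u$), and Lemma~\ref{lm:ClusterDiam_bnc} is a sharper version of your second (the paper obtains $\Delta_s(G)\le 4\,\bn(G)+2$ by a single application of the bottleneck property to a subpath of $P(s,x)$ of length exactly $2r+2$, which also sidesteps your parity issue entirely). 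Your odd-parity fix via $v_{m-1}$ does work as sketched, costing only an additive constant.

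One structural point worth flagging: you invoke Proposition~\ref{prop:BerSey2024} to pass between bounded tree-length and the existence of a quasi-isometry to a tree. That is logically legitimate---the proof in \cite{BerSey2024} is graph-theoretic and does not rely on Manning---but it means your argument is not a self-contained proof of Manning's theorem so much as a reduction of it to the Berger--Seymour equivalence. This matches the paper's own stance, which treats both Propositions~\ref{th:Manning} and~\ref{prop:BerSey2024} as imported and focuses on the explicit $\bn$--$\Delta_s$--$\tl$ inequalities in between.
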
 
A similar result with an additional characterization through so-called {\em fat $K_3$-minors} is given also in \cite[Theorem 3.1]{GeorPapa2023} (see below). 

From Proposition \ref{prop:BerSey2024}, it already follows that if the bottleneck constant  of a graph $G$ is bounded then the tree-length of $G$ is also bounded. A new graph-theoretic proof from \cite{BerSey2024} has more explicit control over the bounds. Let $\bn(G)$ denote the bottleneck constant  of $G$. 

\begin{proposition}[\cite{BerSey2024}] \label{pr:Seymour--Manning} 
For every graph $G$,   $\frac{2}{3}\bn(G)\leq \tl(G)\leq 4\cdot \bn(G)+3$ and  
$\ad(G)\le 24\cdot \bn(G)+18$. 	
\end{proposition}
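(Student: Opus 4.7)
The three bounds split naturally. The inequality $\ad(G)\le 24\bn(G)+18$ will follow at once by chaining $\tl(G)\le 4\bn(G)+3$ with Proposition~\ref{prop:ad-tl-Seymour}: $\ad(G)\le 6\tl(G)\le 6(4\bn(G)+3)=24\bn(G)+18$. So the real work is in proving $\bn(G)\le \tfrac{3}{2}\tl(G)$ and $\tl(G)\le 4\bn(G)+3$.

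For $\bn(G)\le \tfrac{3}{2}\tl(G)$, I would fix an optimal tree-decomposition $\cT$ of length $\lambda:=\tl(G)$. Given a shortest $u$-$v$ path $P=w_0w_1\cdots w_{2k}$ of even length with middle $w:=w_k$ and an arbitrary $u$-$v$ path $Q$, I would look at the shortest tree-path $\pi=B_0B_1\cdots B_m$ in $\cT$ from a bag of $T_u$ to a bag of $T_v$. Since the subtrees $T_{w_i}$ of $\cT$ have pairwise-intersecting consecutive members, their union is connected in $\cT$ and hence contains $\pi$, so every bag $B_j$ on $\pi$ meets $V(P)$ and the index set $I_j:=\{i:w_i\in B_j\}$ is non-empty. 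The separator $B_j\cap B_{j+1}$ is a $u$-$v$ separator in $G$, so the shortest path $P$ itself must cross it, giving $I_j\cap I_{j+1}\neq \emptyset$. Taking $j^*$ to be the smallest index with $\max I_{j^*}\ge k$, the overlap with $I_{j^*-1}$ supplies some $a\in I_{j^*}$ with $a\le k-1$, while $b:=\max I_{j^*}\ge k$; because $w_a$ and $w_b$ lie in the same bag and on a shortest path, $b-a=d_G(w_a,w_b)\le \lambda$, and so $\min(k-a,\,b-k)\le \lambda/2$. Since $B_{j^*}$ separates $u$ from $v$, the path $Q$ must meet $B_{j^*}$ in some vertex $q$, and combining the bag-diameter bound $d_G(q,w_c)\le \lambda$ with $d_G(w_c,w)\le \lambda/2$ for the closer $c\in\{a,b\}$ gives $d_G(q,w)\le \tfrac{3}{2}\lambda$. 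The boundary cases $j^*\in\{0,m\}$ force $k\le \lambda$ and are handled by taking $q\in\{u,v\}\subseteq Q$.

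For $\tl(G)\le 4\bn(G)+3$, I would rely on the layering-partition machinery: by Proposition~\ref{prop:dorisb} it suffices to bound $\Delta_s(G)$ for one arbitrary starting vertex $s$. Fix a cluster $C\subseteq L^i$ of $\mathcal{LP}(G,s)$ and two vertices $x,y\in C$ with $d_G(x,y)=2k$, middle $w$ of a shortest $x$-$y$ path $P$, and an $x$-$y$ path $Q$ in $G[V\setminus D_{i-1}(s)]$ guaranteed by the cluster property. Setting $\beta:=\bn(G)$, the bottleneck property applied to $P$ versus $Q$ gives $q\in Q$ with $d_G(q,w)\le \beta$; since $q\notin D_{i-1}(s)$, this forces $d_G(w,s)\ge i-\beta$. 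Applied to $P$ versus the walk $P_x^{-1}P_y$ through $s$ (along shortest $s$-$x$ and $s$-$y$ paths of length $i$), the bottleneck yields $v'\in P_x\cup P_y$ with $d_G(v',w)\le \beta$; if $v'\in P_x$, then $d_G(v',x)\ge k-\beta$, so $d_G(s,v')\le i-k+\beta$ and hence $d_G(s,w)\le i-k+2\beta$. Combining the two estimates on $d_G(s,w)$ yields $k\le 3\beta$, and therefore $\Delta_s(G)\le 6\beta$. The main obstacle is sharpening this routine bound to the claimed $\Delta_s(G)\le 4\beta+2$, which I plan to achieve by symmetrising---applying the bottleneck property once on each half-path $x$-$w$ and $w$-$y$, paired with a detour through $s$, and taking the more favourable side---or, equivalently, by bounding the cluster-radius $\rho_s(G)\le 2\beta+1$ via Proposition~\ref{prop:Muad-Feodor}, which saves a factor of roughly $2$ in the constant at the cost of a short extra step.
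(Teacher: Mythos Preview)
Your argument for $\bn(G)\le\tfrac32\tl(G)$ via a direct tree-decomposition analysis is sound and reaches the same constant as the paper, though by a different route: the paper (Lemma~\ref{lm:BNC_ClusterDiam}) instead starts a layering partition \emph{at the middle vertex} $c$ and observes that the two points of $P$ at the first layer meeting $Q$ lie in one cluster, whence the bound follows from $\widehat{\Delta}(G)\le 3\tl(G)$. Your approach avoids the detour through cluster-diameter but needs more careful case-handling at the ends of $\pi$.

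The genuine gap is in the second inequality. Your double application of the bottleneck property to the geodesic $P(x,y)$ correctly yields $\Delta_s(G)\le 6\beta$, hence $\tl(G)\le 6\beta+1$, but not the claimed $4\beta+3$. Neither proposed sharpening closes this: the ``symmetrising'' idea still pivots around the middle of $P(x,y)$ and incurs the same $3\beta$ loss, and the bound $\rho_s(G)\le 2\beta+1$ is unproved and not obviously true (the argument you have in hand only produces a centre at radius $3\beta+1$ from an arbitrary cluster vertex). The paper's Lemma~\ref{lm:ClusterDiam_bnc} uses a genuinely different idea: it applies the bottleneck property not to $P(x,y)$ at all, but to a carefully placed length-$(2\beta+2)$ segment of a geodesic $P(s,x)$, whose midpoint $v$ sits exactly at layer $k-\beta-1$. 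The detour goes from $x$ through $y$ along $Q$ and back via $P(s,y)$ to the other end of the segment. The placement of $v$ guarantees that $v$ is at distance $\ge\beta+1$ from both $Q$ (by the layer constraint on $Q$) and the rest of $P(s,x)$ (by geodesy), forcing the bottleneck witness onto $P(s,y)$; a single triangle-inequality count then gives $d_G(x,y)\le 4\beta+2$. Without this trick---choosing the short segment on an $s$-geodesic rather than on the $x$--$y$ geodesic, and tuning its length to $2\beta+2$---the constant $4$ does not fall out, and your derivation of $\ad(G)\le 24\beta+18$ by chaining with Proposition~\ref{prop:ad-tl-Seymour} inherits the shortfall.
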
 

In this paper, we give simpler (more direct) proofs for those bounds by employing a layering partition and its cluster-diameter.  See Lemma  \ref{lm:BNC_ClusterDiam}, Lemma  \ref{lm:ClusterDiam_bnc}, 
Theorem \ref{th:bnc-delta-tl} and 
Corollary \ref{cor:ineq-tl-bnc}. We get, in fact, $\adt(G)\le 4\cdot \bn(G)+2$ (see Theorem \ref{th:adt-tl-delta}). 	
\medskip

Recently, in \cite{GeorPapa2023}, the Manning's Theorem was  extended to include also a characterization via $K$-fat $K_3$-minors. Although a $K$-fat $H$-minor can be defined with any graph $H$ \cite{GeorPapa2023}, here we give a definition only for $H=K_3$ as we work in this paper only with this minor.  
It is said that a graph $G$ has a {\em $K$-fat $K_3$-minor} ($K>0$) if there are three connected subgraphs $H_1$, $H_2$, $H_3$ and three simple paths $P_{1,2}$, $P_{2,3}$, $P_{1,3}$ in $G$ such that for each $i,j\in \{1,2,3\}$ ($i\neq j$), $P_{i,j}$ has one end in $H_i$ and the other end in $H_j$ and  $|P_{i,j}\cap V(H_i)|=|P_{i,j}\cap V(H_j)|=1$, and
\begin{itemize}
    
    \item{} [conditions for being $K$-fat:]  
    $d_G(V(H_i),V(H_j))\ge K$,  $d_G(P_{i,j},V(H_k))\ge K$ ($k\in \{1,2,3\}, k\neq i, j$) and the distance between any two paths $P_{1,2}$, $P_{2,3}$, $P_{1,3}$ is at least $K$.
\end{itemize}


\begin{proposition} [\cite{GeorPapa2023}] \label{prop:Papaaoglu} The following are equivalent for every graph $G$: 
\begin{itemize}
    \item[(i)] The bottleneck constant $\bn(G)$ of $G$ is bounded by a constant; 
  \item[(ii)]    $G$ has no $K$-fat $K_3$-minor for some constant $K>0$; and
 \item[(iii)] $G$ is $(1,C)$-quasi-isometric to a tree for some constant $C$. 
\end{itemize}
\end{proposition}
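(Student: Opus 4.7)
I would prove the three-way equivalence as a cyclic chain $(i) \Rightarrow (iii) \Rightarrow (ii) \Rightarrow (i)$, using earlier propositions to dispose of $(i) \Leftrightarrow (iii)$ cheaply, and doing combinatorial work on a tree-decomposition for the implications involving $(ii)$.

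The implication $(i) \Rightarrow (iii)$ is a one-line composition: Manning's theorem (Proposition~\ref{th:Manning}) turns a bound on $\bn(G)$ into an $(L,C)$-quasi-isometry to a tree, and Proposition~\ref{prop:BerSey2024} upgrades it to a $(1,C')$-quasi-isometry. For $(iii) \Rightarrow (ii)$, I would route through tree-length: Proposition~\ref{prop:BerSey2024} gives $\tl(G) \le \lambda$ for some $\lambda$, and I then show $G$ has no $K$-fat $K_3$-minor for any $K > \lambda$. Given such a supposed minor with branch sets $H_1, H_2, H_3$ and connecting paths $P_{1,2}, P_{1,3}, P_{2,3}$, fix a tree-decomposition $\cT(G)$ of length $\lambda$ and let $\cT_i$ be the subtree of $\cT(G)$ consisting of bags meeting $V(H_i)$. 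A standard median argument on $\cT_1, \cT_2, \cT_3$ produces a bag $B^*$ such that, depending on how many of the $\cT_i$'s contain $B^*$, either (a) two of the $H_i$'s share a bag (forcing $d_G(H_i, H_j) \le \lambda$), or (b) one of the $H_i$'s and a path $P_{j,k}$ with $i \notin \{j,k\}$ both meet $B^*$ (forcing $d_G(H_i, P_{j,k}) \le \lambda$), or (c) all three paths cross $B^*$ (forcing $d_G(P_{i,j}, P_{i',j'}) \le \lambda$). In every case the assumption $K > \lambda$ contradicts fatness.

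The main work is $(ii) \Rightarrow (i)$. Arguing by contrapositive, from $\bn(G) \ge r$ I would construct a $K$-fat $K_3$-minor with $K = \Omega(\min(r,k))$, where $k$ is half the length of the witnessing shortest path. Let $P$ be a witnessing shortest path of length $2k$ from $u$ to $v$ with midpoint $w$, and let $Q$ be a $uv$-path lying outside $D_{r-1}(w)$. I build branch sets ``wide'' enough that the two paths meeting a common branch set can end at distinct, well-separated vertices of it: let $H_1 = P[u, u_P] \cup Q[u, u_Q]$, glued at $u$ with $u_P, u_Q$ chosen at $G$-distance $K$ from $u$ along $P$ and $Q$ respectively; symmetrically take $H_2$ near $v$; and let $H_3$ be the $2K$-segment of $P$ centered at $w$. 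Take $P_{1,3}$ and $P_{2,3}$ as the middle sub-arcs of $P$, and $P_{1,2}$ as the middle sub-arc of $Q$. The inequalities $d_G(H_i, H_j) \ge K$ and $d_G(P_{1,3}, P_{2,3}) \ge K$ follow from $P$ being a geodesic, and $d_G(P_{1,2}, H_3) \ge K$ follows from the bottleneck hypothesis on $Q$.

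The hard inequalities, and the main obstacle, are the inter-path bounds $d_G(P_{1,2}, P_{1,3}) \ge K$ and $d_G(P_{1,2}, P_{2,3}) \ge K$, since $P_{1,3}$ and $P_{1,2}$ both meet $H_1$ near $u$ (and symmetrically near $v$). The right way to force them is a \emph{splicing argument}: any path of length $<K$ from a point on $P_{1,3}$ to a point on $P_{1,2}$ could be concatenated with appropriate sub-arcs of $P$ and $Q$ to yield either a $uv$-walk shorter than $P$, contradicting that $P$ is a geodesic, or a $uv$-walk passing within distance $<r$ of $w$, contradicting the bottleneck hypothesis on $Q$. Making this quantitative in $r$ and $k$, and balancing $K$ against both, is the technical heart of the construction and where the proof is most delicate.
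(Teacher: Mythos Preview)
Your implications $(i)\Rightarrow(iii)$ and $(iii)\Rightarrow(ii)$ are fine; the tree-decomposition median argument you sketch for $(iii)\Rightarrow(ii)$ is essentially the paper's Lemma~\ref{lm:mcw_fat} (the paper runs it through $\mc(G)$ rather than $\tl(G)$, but the mechanism is the same).

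The gap is in $(ii)\Rightarrow(i)$. Your ``splicing argument'' for the inter-path bound $d_G(P_{1,3},P_{1,2})\ge K$ does not work as stated. The second alternative you offer --- ``a $uv$-walk passing within distance $<r$ of $w$, contradicting the bottleneck hypothesis on $Q$'' --- is not a contradiction: the bottleneck hypothesis only says that the \emph{specific} path $Q$ avoids $D_{r-1}(w)$; other $uv$-walks may pass arbitrarily close to $w$. The first alternative (a shorter $uv$-walk) fails because any splice through $Q$ picks up $d_Q(b,v)$, which is uncontrolled. Concretely, with $a\in P_{1,3}$ and $b\in P_{1,2}\subseteq Q$ at distance $<K$, all you get is $d_G(w,b)\le d_G(w,a)+K\le (k-K)+K=k$, which contradicts $d_G(w,b)\ge r$ only when $k=r$. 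Since in general $k\ge r$ but possibly $k>r$, your construction breaks.

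The fix is easy but you have not stated it: first reduce to a witness with $k=r$. If $\bn(G)\ge r$ is witnessed by $P,Q,w$ with $d_G(u,w)=k>r$, replace $u,v$ by the vertices $u',v'$ on $P$ at distance exactly $r$ from $w$; then $P[u',v']$ is a geodesic of length $2r$ with midpoint $w$, and $P[u',u]\cup Q\cup P[v,v']$ yields a $u'v'$-path avoiding $D_{r-1}(w)$. With $k=r$ your construction goes through with $K\approx r/3$. The paper avoids this reduction entirely by routing through the cluster-diameter $\Delta_s(G)$ instead of $\bn(G)$ (Lemma~\ref{lm:ClusterDiam_fat}): in the layering-partition witness one has $d_G(s,Q)\ge\ell=d_G(s,x)$ automatically, so the analog of $k=r$ is built in. The paper also takes the branch sets to be the disks $D_K(x),D_K(y),D_{\ell-2K}(s)$ rather than unions of path-segments, which makes all the distance checks one-liners via the triangle inequality.
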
 
They show in \cite{GeorPapa2023} that $K$ can be chosen to be greater than $2\cdot\bn(G)+1$ in their proof of $(i)\Rightarrow (ii)$ and $C$ can be chosen to be at most $14K$ in their proof of $(ii)\Rightarrow (iii)$. 

In this paper, we give an alternative simple proof of analog of Proposition \ref{prop:Papaaoglu} which also improves the constant $C$ in $(iii)$. 

\subsection{McCarty-width}\label{sec:early-mcw}
A graph $G$ has {\em McCarty-width} $r$ if $r\ge 0$ is minimum such that the following holds: for every three vertices $u, v, w$ of $G$, there is a vertex $x$ such that no connected component of $G[V\setminus D_r(x)]$  contains two of $u, v, w$. Let $\mc(G)$ denote the McCarty-width of $G$. Rose McCarty suggested that $\tl(G)$ is small if and only if $\mc(G)$ is small. This was proved in \cite{BerSey2024}.

\begin{proposition} [\cite{BerSey2024}] \label{prop:sey-mcw} For every graph $G$, $\frac{\tl(G)- 3}{6} \le \mc(G) \le \tl(G).$
\end{proposition}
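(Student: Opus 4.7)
The plan is to prove the two inequalities separately: the upper bound $\mc(G)\le\tl(G)$ via the median bag of a tree-decomposition, and the reverse direction via the cluster-diameter of a layering partition, so that Proposition \ref{prop:dorisb} can convert a bound on $\Delta_s(G)$ into one on $\tl(G)$.

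For $\mc(G)\le\tl(G)$, I would fix an optimal tree-decomposition $\cT(G)$ of length $\lambda:=\tl(G)$ and, given any three vertices $u,v,w$, locate a median bag $B$ of the three subtrees $\cT_u(G),\cT_v(G),\cT_w(G)$: either a bag in their common intersection (if the three subtrees pairwise intersect) or a bag whose removal from $\cT(G)$ leaves them in pairwise distinct components. Picking any $x\in B$, the diameter bound gives $B\subseteq D_\lambda(x)$, and the standard separator property of tree-decompositions then forces every pair from $\{u,v,w\}\setminus B$ into distinct components of $G[V\setminus B]$, hence also of the (possibly finer) $G[V\setminus D_\lambda(x)]$. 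Thus $r=\lambda$ witnesses the McCarty condition for $\{u,v,w\}$.

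For the reverse direction I would aim for the slightly sharper bound $\tl(G)\le 6\mc(G)+1$. Fix an arbitrary $s\in V(G)$, set $r:=\mc(G)$, and take any two vertices $u,v$ in a single cluster $C\subseteq L^i$ of $\mathcal{LP}(G,s)$; recall that $d_G(s,u)=d_G(s,v)=i$ and there is a path $P$ from $u$ to $v$ lying outside $D_{i-1}(s)$. Apply the McCarty property to the triple $\{s,u,v\}$ to obtain a vertex $x$ such that no component of $G[V\setminus D_r(x)]$ contains two of them, and do a short case analysis by which members of $\{s,u,v\}$ lie in $D_r(x)$. The key case is when all three lie outside $D_r(x)$: the path $P$ must meet $D_r(x)$ at some $p$ with $d_G(s,p)\ge i$, giving $d_G(s,x)\ge i-r$; then any shortest $s$-$u$ path meets $D_r(x)$ at some $q$ with $d_G(s,q)\ge d_G(s,x)-r\ge i-2r$, so $d_G(q,u)\le 2r$ and $d_G(u,x)\le 3r$, and symmetrically $d_G(v,x)\le 3r$, yielding $d_G(u,v)\le 6r$. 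The remaining cases (where one or two of $\{s,u,v\}$ lie in $D_r(x)$) give bounds of $2r$ or $4r$ on $d_G(u,v)$. Combining with Proposition \ref{prop:dorisb}, $\tl(G)\le \Delta_s(G)+1\le 6\mc(G)+1$.

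The main obstacle is the bookkeeping in the case analysis for the reverse direction: one has to carefully locate which vertices of $\{s,u,v\}$ lie inside $D_r(x)$, identify which paths are forced to cross it, and chain triangle inequalities without losing constants. Once the main case is handled, the other cases fall out quickly, and the conversion from the cluster-diameter bound to a tree-length bound is immediate by Proposition \ref{prop:dorisb}.
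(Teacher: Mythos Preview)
Your proposal is correct and matches the paper's own argument in Section~\ref{sec:mcw}: the upper bound $\mc(G)\le\tl(G)$ via a median bag is exactly the ``easy'' direction the paper attributes to~\cite{BerSey2024}, and your reverse direction is the content of Lemma~\ref{lm:ClusterDiam_mcw} combined with Proposition~\ref{prop:dorisb}, even yielding the paper's improved constant $6\,\mc(G)+1$. The only difference is cosmetic: the paper avoids your case analysis by observing once that the separating disk $D_r(x)$ must intersect each of the three paths $Q$, $P(s,u)$, $P(s,v)$ (since no component of $G\setminus D_r(x)$ contains two of $s,u,v$), and then argues uniformly with one intersection point on each path---this absorbs all your cases into a single computation.
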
 

We also consider in this paper a more general parameter related to a small radius balanced disk separator. Balanced disk separators proved to be very useful in designing efficient approximation algorithms for the problem of finding a tree $t$-spanner with minimum $t$ of a given graph $G$ \cite{tree-spanner-appr}  and in constructing collective additive tree spanners (see, e.g.,  \cite{CTS1,CTS2,CTS3}) as well as compact distance and routing labeling schemes for variety of graph classes (see, e.g., \cite{CTS2,CTS3,GKKPP2001,KKP2000}).  Experiments performed in \cite{MSstudent} show that many real-life networks have small radius balanced disk separators.

Let  $G=(V,E)$ be a graph and $X\subseteq V$ be a subset  of vertices of $G$. Recall that a disk $D_r(v)$ of $G$ is called a {\em balanced disk separator of $G$ with respect to $X$}, if every connected component of $G[V\setminus D_r(v)]$ has at most $|X|/2$ vertices from $X$. 
Let {\em McCarty-width of order $k$} ($k\ge 3$) of $G$ (denoted by $\mc_k(G)$) be the minimum $r\ge 0$ such that for every subset $X\subseteq V$ with $|X|=k$, there is a vertex $v$ in $G$ such that $D_r(v)$ is a balanced disk separator  of $G$ with respect to $X$.  Clearly, $\mc(G)=\mc_3(G)$. 

In this paper, we show that for every graph $G$, every vertex $s$ of $G$, and every integer $k\ge 3$, $\Delta_s(G)\leq 6\cdot \mc(G)$ (see Lemma  \ref{lm:ClusterDiam_mcw}), $\mc_k(G)\le \rho_s(G)\le \Delta_s(G)$ (see Lemma \ref{lm:Clusterrad_BDS}) and $\mc_k(G)\le \tb(G)$ (see Lemma \ref{lm:tb_mcw_k}).  Furthermore, for any subset $X\subseteq V$ of vertices of $G$, a balanced disk separator $D_r(u)$ with $r\le \Delta_s(G)$ can be found in linear time and a balanced disk separator $D_r(u)$ with minimum $r$ (hence, with $r\le \tb(G)$) can be found in $O(nm)$ time. Consequently, we slightly improve the bounds in Proposition  \ref{prop:sey-mcw} by getting   $\frac{\tl(G)- 1}{6} \le \mc(G) \le \tb(G).$ Our proofs are  simpler and more direct and again employ a layering partition and its cluster-radius. 
 
\subsection{Geodesic loaded cycles} \label{sec:glc}
In a quest to find a property of cycles that guarantees a bound on the tree-length (i.e., a cycle property that coarsely describes the tree-length), Berger and Seymour \cite{BerSey2024} introduced a new notion of bounded load geodesic cycles. 

Let $C$ be a cycle of $G$ and let $F\subseteq E(C)$. The pair $(C, F )$ is called a {\em loaded cycle} of $G$, and $|F|$ is called its {\em load}. If $u, v \in V(C)$ are
distinct, let $d_{C,F} (u, v)$ denote the smaller of $|E(P )\cap F |$, $|E(Q) \cap F|$ where $P, Q$ are the two paths of
$C$ between $u, v$. A loaded cycle $(C, F )$ is geodesic
in $G$ if $d_G(u, v) \ge d_{C,F} (u, v)$ for all $u, v \in V(C)$. If $G$ admits a tree-decomposition in which all bags
have bounded diameter, then every geodesic loaded cycle has bounded load. The main theorem of \cite{BerSey2024} 
says that if every geodesic loaded cycle has bounded load, then $G$ admits a tree-decomposition in
which all bags have bounded diameter. Let  $\glc(G)$ be the maximum load over all geodesic loaded cycles in $G$. 

\begin{proposition} [\cite{BerSey2024}] \label{prop:sey-lgc} For every graph $G$, $\tl(G)- 1 \le \glc(G) \le 3\cdot\tl(G)$.
\end{proposition}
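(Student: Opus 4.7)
\textbf{Plan for the proof of Proposition \ref{prop:sey-lgc}.}

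The plan is to prove the two inequalities separately, using the layering partition toolkit from Section \ref{sec:layer-partit} for the lower bound and a direct tree-decomposition argument for the upper bound.

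\textbf{Lower bound $\tl(G)-1 \le \glc(G)$.}
By Proposition \ref{prop:dorisb}, $\tl(G) \le \Delta_s(G)+1$ for any vertex $s$, so it suffices to exhibit a geodesic loaded cycle of load at least $\Delta_s(G)$. Fix any $s\in V(G)$ and build $\mathcal{LP}(G,s)$. Pick a cluster $L^i_j$ and vertices $u,v\in L^i_j$ realising $d:=\Delta_s(G)=d_G(u,v)$. Let $P$ be a shortest $u$--$v$ path in $G$ (length $d$) and let $Q$ be a $u$--$v$ path all of whose internal vertices lie outside the disk $D_{i-1}(s)$; the path $Q$ exists by the very definition of layering partition. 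After pruning so that $P$ and $Q$ share only the endpoints $u,v$, let $C$ be the cycle $P\cup Q$ and set $F:=E(P)$, so the load is $d$. The geodesic loaded condition $d_G(x,y)\ge d_{C,F}(x,y)$ will be checked case by case on $V(C)$: when $x,y\in V(P)$ this uses that $P$ is a shortest path; when $x,y\in V(Q)$ the $Q$-arc contains no $F$-edges so $d_{C,F}(x,y)=0$; the remaining ``mixed'' case is handled by combining the triangle inequality with the bounds $d_G(s,y)\ge i$ for $y\in V(Q)$ and $d_G(s,x)\le i+\min\{d_P(u,x),d_P(v,x)\}$ for $x\in V(P)$, possibly after replacing $P$ by a slightly shifted shortest path through a well chosen vertex of $L^i_j$.

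\textbf{Upper bound $\glc(G)\le 3\cdot\tl(G)$.}
Fix a tree-decomposition $\cT$ of $G$ with length $\lambda=\tl(G)$ and let $(C,F)$ be an arbitrary geodesic loaded cycle. The starting observation is that for any $x,y\in V(C)$, the two arcs $P,Q$ of $C$ between them satisfy $|E(P)\cap F|+|E(Q)\cap F|=|F|$, so $d_{C,F}(x,y)\le \lfloor |F|/2\rfloor$, with equality for an ``antipodal'' pair $x^\ast,y^\ast$ splitting $F$ as evenly as possible. The geodesic loaded condition then gives $d_G(x^\ast,y^\ast)\ge \lfloor |F|/2\rfloor$. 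For each edge of $C$ choose a bag of $\cT$ containing its two endpoints; traversing $C$ gives a closed walk in $\cT$. Using the Helly property of the subtrees $T_v$ on $\cT$, I would locate three bags $B_1,B_2,B_3$ that jointly ``separate'' the cycle into three $F$-balanced arcs, i.e.\ each $B_i$ contains vertices $a_i,b_i\in V(C)$ with $a_i$ and $b_i$ lying at approximately $|F|/3$ apart in the loaded metric $d_{C,F}$ around $C$. Applying $d_G(a_i,b_i)\le\lambda$ to each pair via the length bound on $\cT$, together with the geodesic loaded inequality $d_G\ge d_{C,F}$, gives three short-cuts of total load-length $\le 3\lambda$ that still cover $F$, forcing $|F|\le 3\lambda$.

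\textbf{Main obstacle.} The delicate part of the lower bound is the mixed-pair case: one must make sure the chosen $P$ and $Q$ satisfy $d_G(x,y)\ge \min\{d_P(u,x),d_P(v,x)\}$ for every $x\in V(P)$, $y\in V(Q)$, which may require tweaking $P$ to exploit the fact that $u,v$ sit in the same cluster. In the upper bound the real work lies in the choice of the three ``separator bags''; the factor $3$ (rather than $2$) in the bound is dictated exactly by the three-way partition of $F$, and promoting the Helly-based selection of $B_1,B_2,B_3$ to a rigorous quantitative statement is the crux.
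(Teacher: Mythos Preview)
The paper does \emph{not} give its own proof of this proposition: it is quoted verbatim from \cite{BerSey2024} in the preliminaries (Section~\ref{sec:glc}) and never revisited. In fact the paper explicitly says that it bypasses geodesic loaded cycles altogether, reproving the \emph{consequences} of Proposition~\ref{prop:sey-lgc} (the bounds for $\bn,\mc,\ad$, etc.) directly through the layering-partition machinery. So there is nothing in this paper to compare your attempt against.

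That said, your plan has real problems on both sides.

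\textbf{Lower bound.} You are trying to establish $\glc(G)\ge\Delta_s(G)$ in order to invoke $\tl(G)\le\Delta_s(G)+1$. This target is \emph{stronger} than what you actually need (only $\glc(G)\ge\tl(G)-1$ is claimed, and $\Delta_s(G)$ can exceed $\tl(G)$ by a factor of~$3$), and your construction does not deliver it. In the mixed case $x\in P$, $y\in Q$ you need $d_G(x,y)\ge\min\{d_P(x,u),d_P(x,v)\}$; the only lower bound the layering gives is $d_G(x,y)\ge i-d_G(s,x)$, while $\min\{d_P(x,u),d_P(x,v)\}$ can be strictly larger than $i-d_G(s,x)$ whenever $x$ fails to lie on a shortest $s$--$u$ or $s$--$v$ path. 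Your proposed fix (``tweak $P$'') is not a fix: switching $P$ to a concatenation of BFS paths changes the load away from $\Delta_s(G)$, and choosing a different shortest path does not in general remove such $x$. The direction $\tl(G)\le\glc(G)+1$ is actually the main theorem of \cite{BerSey2024}; there one \emph{builds} a tree-decomposition of length $\le\glc(G)+1$ from the loaded-cycle hypothesis, which is a completely different mechanism from exhibiting one big loaded cycle.

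\textbf{Upper bound.} Your three-bag Helly idea is not how this works and does not obviously yield an $F$-balanced tripartition. The actual argument uses a \emph{single} bag: for any cycle $C$ and any tree-decomposition there is one bag $B$ meeting $C$ in three vertices $v_1,v_2,v_3$ whose induced arcs partition $E(C)$; applying $d_G(v_i,v_j)\le\lambda$ and the geodesic-loaded inequality to each pair then bounds each $|F\cap A_k|$ by $\lambda$ after a short case check (the point is that \emph{some} such bag makes all three minima land on the short arc). Your sketch never isolates this ``one good bag'' step, which is where the factor $3$ really comes from.
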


The notion of bounded load geodesic cycles and Proposition   \ref{prop:sey-lgc} are central in the proofs of results of \cite{BerSey2024} mentioned in Section \ref{sec:embed-to-tree}, Section \ref{sec:early-bn} and Section \ref{sec:early-mcw}. Using a layering partition and its cluster-diameter, we prove those results in a more intuitive way.  Furthermore, in Section \ref{sec:cbc}, we introduce a new natural "bridging`` property for cycles which generalizes a known property of cycles in chordal graphs and show that it also coarsely describes the tree-length. 



\section{New proofs for coarse equivalency with tree-length} 
\label{sec:proofs}
\subsection{Bottleneck constant of a graph}\label{sec:bnc}

Let us define a bottleneck property in its fullness. The {\em overall bottleneck constant}, denoted by $\BNC(G)$, of a graph $G$ is the least integer $r$ such that if $P(u,v)$ is a shortest path of $G$ between $u, v$ and $w$ is a vertex of $P(u,v)$, then every path between $u, v$ contains a vertex that has distance at most $r$ from $w$. It is easy to show that for every graph $G$, $\bn(G)=\BNC(G)$ (see Corollary  \ref{cor:ineq} below). We prove first the following lemma. 

\begin{lemma} \label{lm:BNC_ClusterDiam}
	For every graph $G$,  $\BNC(G)\leq\frac{\widehat{\Delta}(G)}{2}\leq\frac{3 }{2}\tl(G)$.
\end{lemma}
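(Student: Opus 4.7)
The right-hand inequality $\widehat{\Delta}(G)/2 \le \tfrac{3}{2}\tl(G)$ is immediate from Proposition~\ref{prop:dorisb} (which gives $\widehat{\Delta}(G) \le 3\cdot\tl(G)$), so the work lies entirely in proving $\BNC(G) \le \widehat{\Delta}(G)/2$. The plan is to fix arbitrary vertices $u, v$, a shortest $u$--$v$ path $P = (v_0 = u, v_1, \ldots, v_d = v)$, a vertex $w = v_i$ on $P$, and an arbitrary $u$--$v$ path $Q$, and to bound
$$j^* \;:=\; \min_{y \in V(Q)} d_G(y, w)$$
by $\widehat{\Delta}(G)/2$. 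The case $j^* = 0$ is trivial since then $w \in V(Q)$, so I will assume $j^* \ge 1$.

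The key idea is to use the layering partition $\mathcal{LP}(G, w)$ of $G$ \emph{anchored at $w$ itself}; in this layering, the layer index of a vertex equals its distance from $w$. Since $u, v \in V(Q)$ both lie at distance $\ge j^*$ from $w$, we have $i \ge j^*$ and $d - i \ge j^*$, so $v_{i - j^*}$ and $v_{i + j^*}$ are well-defined vertices of $P$, both belonging to the layer $L^{j^*}$ of $\mathcal{LP}(G, w)$. I will then concatenate three walks --- the sub-path of $P$ from $v_{i - j^*}$ to $u$, the path $Q$ from $u$ to $v$, and the sub-path of $P$ from $v$ to $v_{i + j^*}$ --- and verify that every vertex along this composite walk is at distance $\ge j^*$ from $w$. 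Indeed, because $P$ is a shortest path, sub-paths of $P$ are shortest, so along the two $P$-segments the distance from $w$ takes exactly the monotone values $j^*, j^* + 1, \ldots, i$ and $d - i, d - i - 1, \ldots, j^*$ respectively; and along $Q$ every vertex is at distance $\ge j^*$ from $w$ by definition of $j^*$. Consequently, $v_{i-j^*}$ and $v_{i+j^*}$ are connected in $G$ by a walk avoiding $D_{j^* - 1}(w)$, which places them in the same cluster of $\mathcal{LP}(G, w)$.

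Because $P$ is a shortest path, $d_G(v_{i - j^*}, v_{i + j^*}) = 2 j^*$, so this single cluster has $G$-diameter at least $2 j^*$. Hence $2 j^* \le \Delta_w(G) \le \widehat{\Delta}(G)$, giving $j^* \le \widehat{\Delta}(G)/2$, as required. The main subtlety is the choice of the layering base: anchoring the layering at $u$ (the naive choice) only yields a vertex of $Q$ in the same cluster as $w$ inside layer $L^i$, producing the weaker bound $d_G(w, V(Q)) \le \widehat{\Delta}(G)$ and losing the factor of~$2$. Centering the layering at $w$ and exploiting the symmetry of $P$ around $w$ --- via the two $P$-vertices $v_{i - j^*}$ and $v_{i + j^*}$ placed at equal distance $j^*$ on opposite sides of $w$ --- is precisely what forces a pair at $G$-distance $2 j^*$ into one cluster, yielding the factor-$2$ improvement.
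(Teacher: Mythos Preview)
Your proof is correct and follows essentially the same approach as the paper: center the layering partition at the chosen vertex $w$ on the shortest path, take the two $P$-vertices at distance $j^*$ on either side of $w$, and argue via the concatenated walk through $Q$ that they lie in a common cluster of $\mathcal{LP}(G,w)$, forcing $2j^*\le\widehat{\Delta}(G)$. The only cosmetic difference is that the paper phrases $j^*$ as $\ell+1$ where $\ell$ is the largest radius of a disk around $c$ missing $Q$, but the argument is otherwise identical.
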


\begin{proof}
	Let $x,y$ be arbitrary vertices of $G$. Consider an arbitrary shortest path $P(x,y)$ connecting $x$ and $y$ and an arbitrary vertex $c$ of $P(x,y)$. 
 Let also $Q$ be an arbitrary path between $x$ and $y$ and $\ell \geq 0$ be the maximum integer such that $D_{\ell}(c)$ does not intersect $Q$. 
 
 Consider two vertices $a$ and $b$ in $P(x,y)$ such that $d_G(c,a)=d_G(c,b)=\ell$, $a$ is between $c$ and $x$ and $b$ is between $c$ and $y$. Let also $a'$ and $b'$ be vertices in $P(x,y)$ such that $d_G(c,a')=d_G(c,b')=\ell+1$, $a$ is adjacent to $a'$ and $b$ is adjacent to $b'$ (see Fig. \ref{fig:one}(a) for an illustration). 
 Since $D_{\ell+1}(c)$ intersects $Q$ but $D_{\ell}(c)$ does not, vertices $a',b'$ are in the same cluster of layering partition $\mathcal{LP}(G,c)$ of $G$ starting at vertex $c$ (they are connected outside the disk $D_\ell(c)$ by $Q$ and parts of $P(x,y)$ between $x$ and $a'$ and between $y$ and $b'$). Hence, $d_G(a',b')\le \Delta_c(G)\le \widehat{\Delta}(G)$. On the other hand,
	$d_G(a',b')=1+d_G(a,b)+1=2\ell+2$, i.e., $d_G(c,Q)=\ell+1= \frac{d_G(a',b')}{2}\le \frac{\widehat{\Delta}(G)}{2}$. 
 
 Thus, $\BNC(G)\leq\frac{\widehat{\Delta}(G)}{2}$ must hold. Applying also Proposition \ref{prop:dorisb}, we get $\BNC(G)\leq\frac{\widehat{\Delta}(G)}{2}\leq\frac{3}{2}\tl(G)$. \qed
\end{proof}

Combining Proposition \ref{prop:ClustDiamAtAnys} with Lemma \ref{lm:BNC_ClusterDiam}, one gets the following corollary.

\begin{corollary} \label{cor:ineq}
	For every graph $G$ and every vertex $s$ of $G$,   $\bn(G)= \BNC(G)\leq \frac{3}{2}\Delta_s(G)$. 	
 In particular, $\bn(G)= \BNC(G)\leq \frac{3}{2}\Delta(G)$ for every graph $G$.
\end{corollary}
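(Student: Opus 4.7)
The plan is to chain Lemma \ref{lm:BNC_ClusterDiam} with the two inequalities of Proposition \ref{prop:ClustDiamAtAnys} to get the numerical bound, and to handle the asserted equality $\bn(G)=\BNC(G)$ as a short separate observation. The direction $\bn(G)\le \BNC(G)$ is immediate from the definitions, since the middle vertex of an even-length shortest path is a special instance of an arbitrary vertex on a shortest path.

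For the bound, the lemma supplies $\BNC(G)\le \widehat{\Delta}(G)/2$. Proposition \ref{prop:ClustDiamAtAnys} gives $\widehat{\Delta}(G)\le 3\Delta(G)$, and by definition $\Delta(G)=\min_{s\in V}\Delta_s(G)\le \Delta_s(G)$ for every vertex $s$. Chaining yields
$\BNC(G)\le \widehat{\Delta}(G)/2 \le \frac{3}{2}\Delta(G) \le \frac{3}{2}\Delta_s(G)$,
and the ``in particular'' clause is just the case where $s$ realizes $\Delta(G)$.

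For the reverse inequality $\BNC(G)\le \bn(G)$, I would reduce an arbitrary vertex $w$ on a shortest $u$-$v$-path $P=(v_0,\dots,v_k)$ to the midpoint of an even-length shortest path. Set $i=d_G(u,w)$ and assume without loss of generality $i\le k-i$, so that $(v_0,\dots,v_{2i})$ is itself a shortest path of even length $2i$ with midpoint $w=v_i$. Given any $u$-$v$-path $Q$, concatenate it with the subpath of $P$ from $v=v_k$ back to $v_{2i}$ to obtain a walk $Q'$ from $u$ to $v_{2i}$. Applying the definition of $\bn(G)$ to the even-length shortest path with midpoint $w$ and to $Q'$ produces a vertex $z\in Q'$ with $d_G(z,w)\le \bn(G)$. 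If $z\in Q$ we are done; otherwise $z=v_\ell$ with $2i\le \ell\le k-1$, hence $d_G(z,w)=\ell-i\ge i$, which forces $i\le \bn(G)$, and then the endpoint $u\in Q$ itself lies within distance $\bn(G)$ of $w$. The case $i>k-i$ is symmetric, using the subpath from $v_{2i-k}$ to $v$.

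The main (minor) obstacle is precisely this reduction to the even-midpoint case: the numerical bound is a two-line chaining of Lemma \ref{lm:BNC_ClusterDiam} and Proposition \ref{prop:ClustDiamAtAnys}, while the equality $\bn(G)=\BNC(G)$ requires the elementary but non-immediate argument above, which is the only ingredient not directly supplied by the two cited results.
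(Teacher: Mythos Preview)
Your argument is correct and takes essentially the same route as the paper: the bound is the same chain through Proposition~\ref{prop:ClustDiamAtAnys} and Lemma~\ref{lm:BNC_ClusterDiam}, and the equality $\bn(G)=\BNC(G)$ rests on the same trick of passing to a sub-shortest-path on which the chosen vertex is the midpoint. The paper organizes the equality from the other end---it starts from an extremal witness for $\BNC(G)=r$ (a shortest path $P(x,y)$, a vertex $c$ on it, and a detour $Q$ avoiding $D_{r-1}(c)$), trims $P$ symmetrically to length $2r$ around $c$, and extends $Q$ by the trimmed segments of $P$ to obtain a witness that $\bn(G)\ge r$; this avoids your ``$z\notin Q$'' case split, but the content is the same. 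One small point: your $Q'$ is a walk, not necessarily a simple path, so strictly you should first extract a simple subpath of $Q'$ before invoking the definition of $\bn(G)$; this is harmless since every vertex of such a subpath lies in $Q'$, and the rest of your case analysis goes through unchanged.
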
 
\begin{proof} By Proposition \ref{prop:ClustDiamAtAnys} and Lemma \ref{lm:BNC_ClusterDiam}, it remains only to show $\bn(G)= \BNC(G)$. Clearly, by definitions, $\bn(G)\le\BNC(G)$. To show the equality, 
let $\BNC(G)=r$. Then, for some vertices $x,y$ in $G$, some shortest path $P(x,y)$ connecting $x$ and $y$, and some vertex $c\in P(x,y)$,  there must exist a path $Q$ in $G[V\setminus D_{r-1}(c)]$ connecting $x$ and $y$. Necessarily, $d_G(x,c), d_G(y,c)\ge r$. Consider a vertex $x'$  on a subpath of $P(x,y)$ between $x$ and $c$ and a vertex $y'$  on a subpath of $P(x,y)$ between $y$ and $c$ with $d_G(x',c)=d_G(y',c)=r$. We have that vertices $x'$ and $y'$ with $d_G(x',y')=2r$ and $d_G(x',c)=d_G(c,y')=r$ are connected in $G[V\setminus D_{r-1}(c)]$ by  $P(x,x')\cup Q\cup P(y,y')$, where $P(x,x')$ and $P(y,y')$ are subpaths of $P(x,y)$ between corresponding vertices.  This shows that $\bn(G)>r-1$, implying $\bn(G)=r=\BNC(G)$. 
\qed  
\end{proof}

  \begin{figure}[htb]
    \begin{center} 
      \begin{minipage}[b]{16cm}
        \begin{center} 
          \vspace*{-36mm}
          \includegraphics[height=16cm]{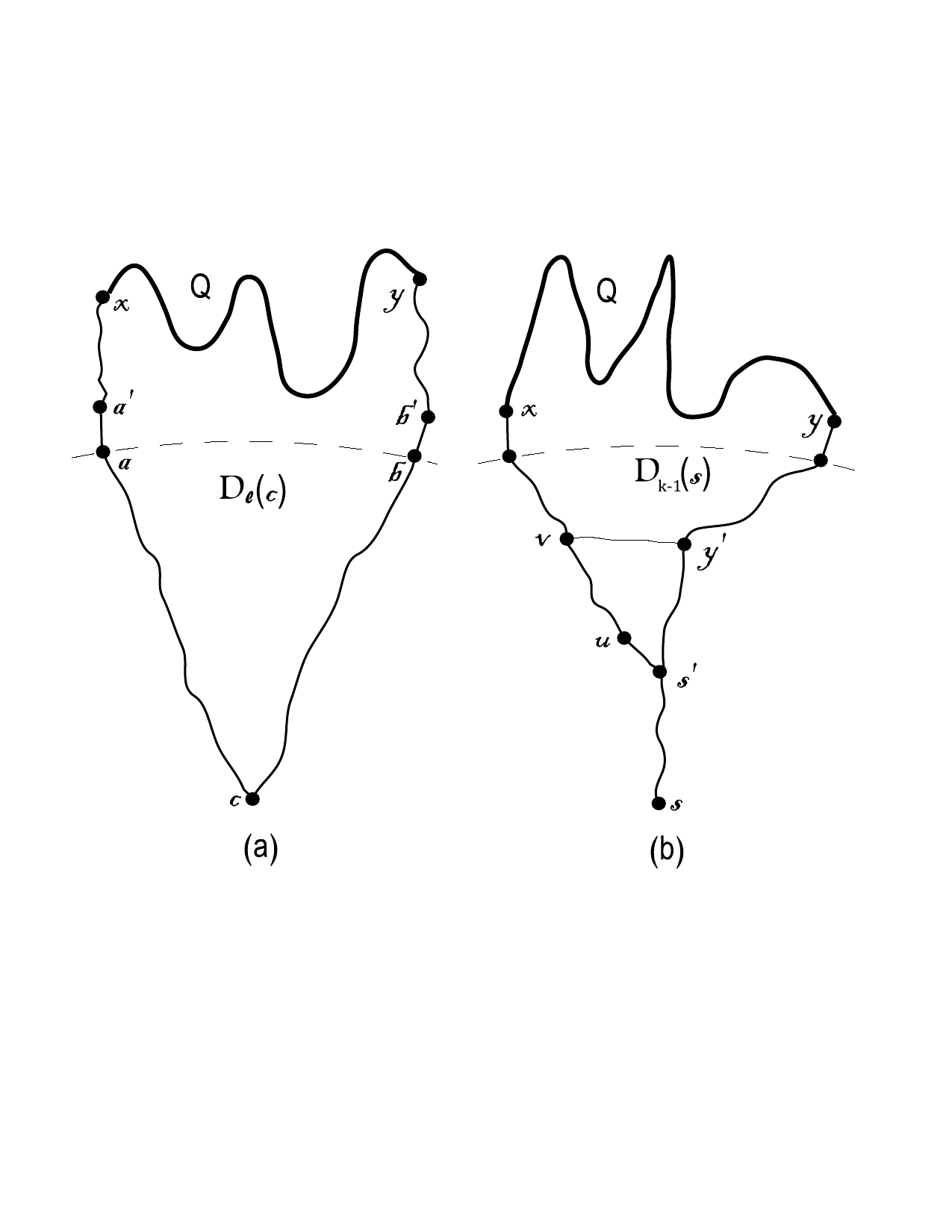}
        \end{center} \vspace*{-49mm}
        \caption{\label{fig:one} Illustrations to the proofs of Lemma \ref{lm:BNC_ClusterDiam} and Lemma \ref{lm:ClusterDiam_bnc}.} %
      \end{minipage}
    \end{center}
   \vspace*{-5mm}
  \end{figure}

Now we upperbound $\Delta_s(G)$ by a linear function of $\bn(G)$.

\begin{lemma} \label{lm:ClusterDiam_bnc}
	For every graph $G$  and every vertex $s$ of $G$, $\Delta_s(G)\leq 4\cdot \bn(G)+2$. In particular, $\widehat{\Delta}(G)\leq 4\
 \bn(G)+2$ for every graph $G$.
\end{lemma}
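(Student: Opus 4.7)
}
The plan is to fix a cluster $C=L^i_j$ of $\mathcal{LP}(G,s)$ and two vertices $u,v\in C$, and then bound $d_G(u,v)$ using the overall bottleneck property (by Corollary \ref{cor:ineq}, $\bn(G)=\BNC(G)$, so the bottleneck inequality is available at \emph{every} vertex of \emph{every} shortest path). Set $r:=\bn(G)$. Note that $d_G(s,u)=d_G(s,v)=i$ and, because $u$ and $v$ share a cluster, there is a path $R$ from $u$ to $v$ lying entirely in $G[V\setminus D_{i-1}(s)]$, i.e., every vertex of $R$ has distance at least $i$ from $s$.

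First, I would dispose of the trivial case $i\le r+1$: here $d_G(u,v)\le d_G(u,s)+d_G(s,v)=2i\le 2r+2\le 4r+2$, and we are done. So assume $i\ge r+2$. Fix shortest paths $P(s,u)$ and $P(s,v)$ of length $i$, and pick the vertex $a\in P(s,u)$ with $d_G(s,a)=i-r-1$ and $d_G(a,u)=r+1$. Form the walk $Q:=P(s,v)\cup R$ from $s$ to $u$, and apply the bottleneck property at the vertex $a$ on the shortest path $P(s,u)$: there is some $a'\in V(Q)$ with $d_G(a,a')\le r$.

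The next step is to argue that $a'$ must lie on the $P(s,v)$-portion of $Q$, not on $R$. Indeed, $d_G(s,a')\le d_G(s,a)+d_G(a,a')\le (i-r-1)+r=i-1$, so $a'\in D_{i-1}(s)$, whereas $R$ lies outside $D_{i-1}(s)$. Hence $a'$ is an internal vertex of the shortest path $P(s,v)$, giving $d_G(s,a')+d_G(a',v)=i$. Using $d_G(s,a')\ge d_G(s,a)-d_G(a,a')\ge i-2r-1$, we obtain $d_G(a',v)\le 2r+1$. The triangle inequality then yields
\[
d_G(u,v)\;\le\; d_G(u,a)+d_G(a,a')+d_G(a',v)\;\le\;(r+1)+r+(2r+1)\;=\;4r+2,
\]
which is the desired bound $\Delta_s(G)\le 4\cdot\bn(G)+2$; the particular statement $\widehat{\Delta}(G)\le 4\cdot\bn(G)+2$ follows since $s$ was arbitrary.

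The only subtle point—and where I expect the main obstacle to lie—is the choice of the ``anchor'' vertex $a$ at the right distance from $u$: if $a$ is taken too close to $u$, the resulting bottleneck witness $a'$ can fail to be forced off $R$; if it is taken too close to $s$, the bound $d_G(a',v)\le 2r+1$ deteriorates. The value $d_G(a,u)=r+1$ is exactly what is needed so that (i) the ball $D_r(a)$ stays strictly inside $D_{i-1}(s)$, preventing $a'\in R$, and (ii) the resulting composite distance estimate matches the claimed $4r+2$.
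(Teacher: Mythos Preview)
Your proof is correct and follows the same core idea as the paper—place an anchor on one shortest $s$-path at distance $r+1$ from its endpoint, use the bottleneck property to locate a witness on the other shortest $s$-path, and finish with the triangle inequality. The execution differs slightly: the paper works only with the original $\bn(G)$ (middle vertex of an even-length subpath), so it restricts attention to a local subpath $P(u,x)$ of length $2r+2$, must build a ``going-around'' alternative path through the last common vertex $s'$ of $P(s,x)$ and $P(s,y)$, and then argue separately that the witness avoids both $Q$ and $P(s',u)$. By invoking Corollary~\ref{cor:ineq} to get $\BNC(G)=\bn(G)$, you apply the bottleneck inequality directly at a non-middle vertex of the full path $P(s,u)$, with the natural alternative walk $P(s,v)\cup R$; this sidesteps the $s'$-analysis entirely and gives a shorter argument. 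The paper's version is self-contained (it does not appeal to $\BNC$), while yours is cleaner once that equality is available.
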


\begin{proof}
Let $s$ be an arbitrary vertex of $G$ and $\mathcal{LP}(G,s)$ be the layering partition of $G$ starting at $s$. Consider vertices $x$ and $y$ from a cluster of $\mathcal{LP}(G,s)$ with $d_G(x,y)=\Delta_s(G)$, and let $k:=d_G(s,x)=d_G(s,y)$ and $r:=\bn(G)$. Choose also a path $Q$ connecting $x$ and $y$ outside the disk $D_{k-1}(s)$.

Consider arbitrary  shortest paths $P(s,x)$ and $P(s,y)$  of $G$ connecting $s$ with $x$ and $y$, respectively. Let $s'$ be a vertex from  $P(s,x)\cap P(s,y)$ furthest from $s$. We may assume that $k':=d_G(x,s')=d_G(y,s')=k-d_G(s,s')$ is greater than $2r+1$ since, otherwise,  $d_G(x,y)\le d_G(x,s')+d_G(s',y)=2k'\leq 4r+2$, and we are done. 

Pick now a vertex $u$ in $P(s,x)$ such that  
$d_G(x,u)=2r+2\leq d_G(x,s')$. See Fig. \ref{fig:one}(b) for an illustration.  
The middle vertex $v$ of a subpath $P(u,x)$ of $P(s,x)$ must see within $r$ every path connecting $u$ with $x$. In particular, $d_G(v,Q')\leq r$ for a path $Q'$ formed  by concatenating $Q$ with $P(y,s')\cup P(s',u)$, where $P(y,s')$ is a subpath of $P(y,s)$ between $y$ and $s'$ and $P(s',u)$ is a subpath of $P(s,x)$ between $s'$ and 
$u$. As $d_G(v,Q)=d_G(v,x)=r+1$ and $d_G(v,P(s',u))=d_G(v,u)=r+1$, $v$ cannot see within $r$ any vertex of $Q\cup P(s',u)$. Hence, there must exist a 
vertex $y'$ in $P(y,s')$ such that $d_G(v,y')\le r$. 

Since $d_G(y,y')=k-d_G(s,y')$ and $k=d_G(x,s)\le d_G(s,y')+d_G(y',v)+d_G(v,x)$, necessarily, $d_G(y,y')\le d(y',v)+d_G(v,x)\le r+r+1=2r+1$. Consequently,  $d_G(x,y)\le d_G(x,v)+d_G(v,y')+d_G(y',y)\leq r+1+r+2r+1=4r+2$. That is,  $\Delta_s(G)\leq 4\cdot \bn(G)+2$. \qed
\end{proof}

Combining Lemma  \ref{lm:BNC_ClusterDiam}, Lemma  \ref{lm:ClusterDiam_bnc}, Corollary \ref{cor:ineq} and Proposition \ref{prop:dorisb}, we get the following result. 

\begin{theorem} \label{th:bnc-delta-tl}
	For every graph $G$  and every vertex $s$ of $G$, the following inequalities hold: 
  $$\frac{\Delta_s(G)-2}{4}\leq \bn(G)= \BNC(G)\leq \frac{3}{2}\Delta_s(G),$$ 
   $$\frac{\tl(G)-3}{4}\leq\frac{\widehat{\Delta}(G)-2}{4}\leq \bn(G)=\BNC(G)\leq\frac{\widehat{\Delta}(G)}{2}\leq\frac{3 }{2}\tl(G).$$
\end{theorem}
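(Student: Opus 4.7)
The plan is to assemble this theorem by gluing together Lemma \ref{lm:BNC_ClusterDiam}, Lemma \ref{lm:ClusterDiam_bnc}, Corollary \ref{cor:ineq}, and Proposition \ref{prop:dorisb}. No new argument is required; the task is simply to read each link of the two displayed chains off the appropriate earlier result, being careful about the direction of each rearrangement and the choice of constants.

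For the per-vertex chain $\frac{\Delta_s(G)-2}{4}\leq \bn(G)= \BNC(G)\leq \frac{3}{2}\Delta_s(G)$, I would first rearrange Lemma \ref{lm:ClusterDiam_bnc}, which asserts $\Delta_s(G)\leq 4\cdot\bn(G)+2$, to obtain the leftmost inequality. The equality $\bn(G)=\BNC(G)$ and the rightmost inequality $\BNC(G)\leq \frac{3}{2}\Delta_s(G)$ are then exactly the content of Corollary \ref{cor:ineq}.

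For the global chain $\frac{\tl(G)-3}{4}\leq\frac{\widehat{\Delta}(G)-2}{4}\leq \bn(G)=\BNC(G)\leq\frac{\widehat{\Delta}(G)}{2}\leq\frac{3}{2}\tl(G)$, I would derive the leftmost inequality by chaining $\tl(G)\leq \Delta(G)+1$ (from Proposition \ref{prop:dorisb}) with the trivial $\Delta(G)\leq \widehat{\Delta}(G)$ to get $\tl(G)-3\leq \widehat{\Delta}(G)-2$. The next link is the ``$\widehat{\Delta}$'' version stated at the end of Lemma \ref{lm:ClusterDiam_bnc}, namely $\widehat{\Delta}(G)\leq 4\cdot\bn(G)+2$, rearranged. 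The central equality is Corollary \ref{cor:ineq}, the bound $\BNC(G)\leq \frac{\widehat{\Delta}(G)}{2}$ is Lemma \ref{lm:BNC_ClusterDiam}, and the final piece $\frac{\widehat{\Delta}(G)}{2}\leq \frac{3}{2}\tl(G)$ is the inequality $\widehat{\Delta}(G)\leq 3\cdot\tl(G)$ provided by Proposition \ref{prop:dorisb}.

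Since every constituent inequality has already been established, there is no real obstacle. The only thing worth double-checking when writing up is that the $\widehat{\Delta}$-version (rather than the $\Delta_s$-version) of Lemma \ref{lm:ClusterDiam_bnc} is invoked in the second chain, and that Proposition \ref{prop:dorisb} supplies both the lower bound $\tl(G)\leq \Delta(G)+1$ used on the far left and the upper bound $\widehat{\Delta}(G)\leq 3\cdot\tl(G)$ used on the far right.
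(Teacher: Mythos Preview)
Your proposal is correct and matches the paper's own proof exactly: the paper simply states that the theorem follows by combining Lemma~\ref{lm:BNC_ClusterDiam}, Lemma~\ref{lm:ClusterDiam_bnc}, Corollary~\ref{cor:ineq}, and Proposition~\ref{prop:dorisb}, and you have spelled out precisely which ingredient supplies each link in the two displayed chains.
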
 

\begin{corollary} \label{cor:ineq-tl-bnc}
	For every graph $G$,   $\frac{2}{3}\bn(G)\leq \tl(G)\leq 4\cdot \bn(G)+3$. 	
\end{corollary}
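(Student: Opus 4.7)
The plan is to obtain Corollary~\ref{cor:ineq-tl-bnc} as a direct rearrangement of the chain of inequalities already assembled in Theorem~\ref{th:bnc-delta-tl}. Concretely, the rightmost portion of that chain reads $\bn(G) \le \tfrac{3}{2}\tl(G)$, and dividing by $3/2$ yields $\tfrac{2}{3}\bn(G) \le \tl(G)$, which is the lower bound we want. Symmetrically, the leftmost portion reads $\tfrac{\tl(G)-3}{4} \le \bn(G)$, and multiplying by $4$ and moving the constant gives $\tl(G) \le 4\cdot \bn(G) + 3$, the upper bound. So the corollary is just a trimmed restatement of Theorem~\ref{th:bnc-delta-tl} in which the intermediate parameter $\widehat{\Delta}(G)$ is suppressed.

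If one prefers to write the proof without appealing to the packaged theorem statement, it factors cleanly through the cluster-diameter. First I would invoke Lemma~\ref{lm:BNC_ClusterDiam}, which gives $\bn(G) = \BNC(G) \le \widehat{\Delta}(G)/2$, and then apply the upper bound $\widehat{\Delta}(G) \le 3\cdot \tl(G)$ coming from Proposition~\ref{prop:dorisb} (combined with Proposition~\ref{prop:ClustDiamAtAnys}) to conclude $\bn(G) \le \tfrac{3}{2}\tl(G)$. Then, for the reverse direction, I would apply Lemma~\ref{lm:ClusterDiam_bnc} to get $\widehat{\Delta}(G) \le 4\cdot \bn(G) + 2$, and combine with Proposition~\ref{prop:dorisb}'s bound $\tl(G) \le \Delta(G) + 1 \le \widehat{\Delta}(G) + 1$ to finish with $\tl(G) \le 4\cdot \bn(G) + 3$.

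There is essentially no obstacle here: all the genuine work was carried out inside Lemma~\ref{lm:BNC_ClusterDiam} (where a layering partition centered at a point on a shortest path converts a path avoiding a disk into two vertices of the same cluster, yielding the factor $1/2$) and inside Lemma~\ref{lm:ClusterDiam_bnc} (where applying the bottleneck property at the midpoint of an appropriately chosen subpath of $P(s,x)$ forces an alternative route to pass close to $P(s,y)$, yielding the linear bound in $\bn(G)$). The only care needed when writing the corollary's proof is to make sure the two lemma bounds are paired with the correct halves of Proposition~\ref{prop:dorisb}, but this is purely bookkeeping.
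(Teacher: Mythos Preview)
Your proposal is correct and matches the paper's approach exactly: the corollary is obtained as an immediate algebraic rearrangement of the inequalities in Theorem~\ref{th:bnc-delta-tl}, which in turn are assembled from Lemma~\ref{lm:BNC_ClusterDiam}, Lemma~\ref{lm:ClusterDiam_bnc}, Corollary~\ref{cor:ineq}, and Proposition~\ref{prop:dorisb}. There is nothing to add or correct.
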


Same inequalities as in Corollary \ref{cor:ineq-tl-bnc} can be derived from  \cite[Lemma 2.4, Lemma 4.4, Theorem 4.5]{BerSey2024}. 

Similar to parameters $\Delta_s(G)$ and $\rho_s(G)$, the bottleneck constant $\bn(G)$ of a given graph $G$ can be computed in  polynomial time (in at most $O(n^3m)$ time, see Section \ref{sec:mcw}). However, it gives a worse than 3-approximation of $\tl(G)$, $\tb(G)$. 


\subsection{McCarty-width of a graph}\label{sec:mcw}
Here, we give an alternative proof for Rose McCarty's conjecture that $\tl(G)$ is small if and only if $\mc(G)$ is small. See also Corollary   \ref{cor:mcw-adt} and Theorem \ref{th:mf-tl-mcw} for 
other alternative proofs. 

\begin{lemma} \label{lm:ClusterDiam_mcw}
	For every graph $G$  and every vertex $s$ of $G$, $\Delta_s(G)\leq 6\cdot \mc(G)$. In particular, $\widehat{\Delta}(G)\leq 6\cdot \mc(G)$ for every graph $G$.
\end{lemma}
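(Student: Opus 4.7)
The plan is to let $x, y$ be vertices in a common cluster of the layering partition $\mathcal{LP}(G,s)$ at layer $L^i$ with $d_G(x,y)=\Delta_s(G)$, and to bound this distance by $6r$, where $r=\mc(G)$. By the definition of layering partitions, there is a path $Q$ from $x$ to $y$ that avoids the disk $D_{i-1}(s)$; equivalently, every vertex $c\in Q$ satisfies $d_G(s,c)\ge i$. I will apply the McCarty-width property to the triple $\{s,x,y\}$ to obtain a vertex $z$ such that no connected component of $G[V\setminus D_r(z)]$ contains two of $s,x,y$. The proof then combines this separation property with the lower bound $d_G(s,c)\ge i$ along $Q$ and with the length-$i$ budget along shortest paths $P(s,x),P(s,y)$.

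The main step is to locate three convenient points close to $z$: one on a shortest path $P(s,x)$, one on a shortest path $P(s,y)$, and one on $Q$, and to show that the two crossings on shortest paths occur near $x$ and $y$ respectively. Concretely, in the generic case where all three of $s,x,y$ lie outside $D_r(z)$, the separation property forces $P(s,x)$, $P(s,y)$, and $Q$ to each intersect $D_r(z)$; call these intersection points $a,b,c$ respectively. The triangle inequality through $z$ gives $d_G(a,b),d_G(a,c),d_G(b,c)\le 2r$. Because $d_G(s,c)\ge i$ while $d_G(s,c)\le d_G(s,a)+d_G(a,c)\le d_G(s,a)+2r$, we get $d_G(s,a)\ge i-2r$, hence $d_G(a,x)=i-d_G(s,a)\le 2r$. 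Symmetrically $d_G(b,y)\le 2r$. Piecing these together yields
\[
d_G(x,y)\;\le\; d_G(x,a)+d_G(a,b)+d_G(b,y)\;\le\; 2r+2r+2r\;=\;6r.
\]

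The remaining work is a short case analysis for when some of $s,x,y$ actually lie inside $D_r(z)$. If both $x,y\in D_r(z)$ then $d_G(x,y)\le 2r$ immediately. If exactly one of $x,y$ is in $D_r(z)$ but $s\notin D_r(z)$, the same reasoning applied to the single crossing on $P(s,y)$ (or $P(s,x)$) combined with $d_G(x,z)\le r$ gives $d_G(x,y)\le 4r$. If $s\in D_r(z)$ with $x,y\notin D_r(z)$, then $x,y$ lie in different components of $G[V\setminus D_r(z)]$, so $Q$ meets $D_r(z)$ at some $c$; the lower bound $d_G(s,c)\ge i$ together with $d_G(s,c)\le 2r$ forces $i\le 2r$, whence $d_G(x,y)\le 2i\le 4r$. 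The remaining subcases (one of $x,y$ in $D_r(z)$ together with $s\in D_r(z)$, or all three in $D_r(z)$) collapse to $i\le 2r$ or $d_G(x,y)\le 2r$ by the same telescoping. Thus in every case $d_G(x,y)\le 6r=6\cdot\mc(G)$, proving $\Delta_s(G)\le 6\cdot\mc(G)$; the ``in particular'' statement for $\widehat{\Delta}(G)$ is immediate by taking a maximum over $s$.

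I expect the main obstacle to be purely organizational, namely making the case analysis exhaustive while keeping the argument short, rather than any combinatorial subtlety; the core quantitative input in every case is simply that the shortest-path length budget $i$ from $s$ is tight, which forces any $D_r(z)$-crossing on $P(s,x)$ or $P(s,y)$ to occur within distance $2r$ of the far endpoint. This is what turns the separator $D_r(z)$ of radius $r$ into a diameter bound of $6r$.
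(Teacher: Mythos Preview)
Your proof is correct and follows essentially the same argument as the paper: apply the McCarty-width property to $\{s,x,y\}$, find points of $D_r(z)$ on $P(s,x)$, $P(s,y)$, and $Q$, and use the distance constraint $d_G(s,Q)\ge i$ to force the crossings on the shortest paths to sit within $2r$ of $x$ and $y$. Your case analysis is unnecessary, though, since the three paths always meet $D_r(z)$ (if an endpoint lies in $D_r(z)$ it already witnesses the intersection), so the ``generic'' computation applies verbatim in every case --- this is exactly how the paper handles it in a single stroke.
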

\begin{proof}
Let $s$ be an arbitrary vertex of $G$ and $\mathcal{LP}(G,s)$ be the layering partition of $G$ starting at $s$. Consider vertices $x$ and $y$ from a cluster of $\mathcal{LP}(G,s)$ with $d_G(x,y)=\Delta_s(G)$, and let $k:=d_G(s,x)=d_G(s,y)$ and $r:=\mc(G)$. Choose also a path $Q$ connecting $x$ and $y$ outside the disk $D_{k-1}(s)$ and arbitrary shortest paths $P(s,x)$ and $P(s,y)$ connecting $s$ with $x$ and $y$, respectively. 

Since $r:=\mc(G)$, for vertices $s,x,y$, there is a vertex $v$ in $G$ such that disk $D_r(v)$ intersects all three paths: $Q$, $P(s,x)$ and $P(s,y)$ (see Fig. \ref{fig:two}). Choose arbitrary vertices $u\in Q\cap D_r(v)$, $x'\in P(s,x)\cap D_r(v)$ and $y'\in P(s,y)\cap D_r(v)$. We have $d_G(x',u), d_G(y',u)$ and $d_G(x',y')$  are all at most $2r$.  Since $d_G(s,Q)= k$, we get $d_G(s,x)=k\le d_G(s,u)\le  d_G(s,x')+d_G(x',u)\le d_G(s,x')+2r$. That is, $d_G(x,x')=d_G(s,x)-d_G(s,x')\le 2r$. Similarly, $d_G(y,y')\le 2r$. Consequently, $d_G(x,y)\le d_G(x,x')+d_G(x',y')+d_G(y,y')\le 6r$, i.e., $\Delta_s(G)\le 6\cdot \mc(G)$.   
\qed
\end{proof} 

  \begin{figure}[htb]
    \begin{center} 
      \begin{minipage}[b]{16cm}
        \begin{center} 
          \vspace*{-36mm}
          \includegraphics[height=16cm]{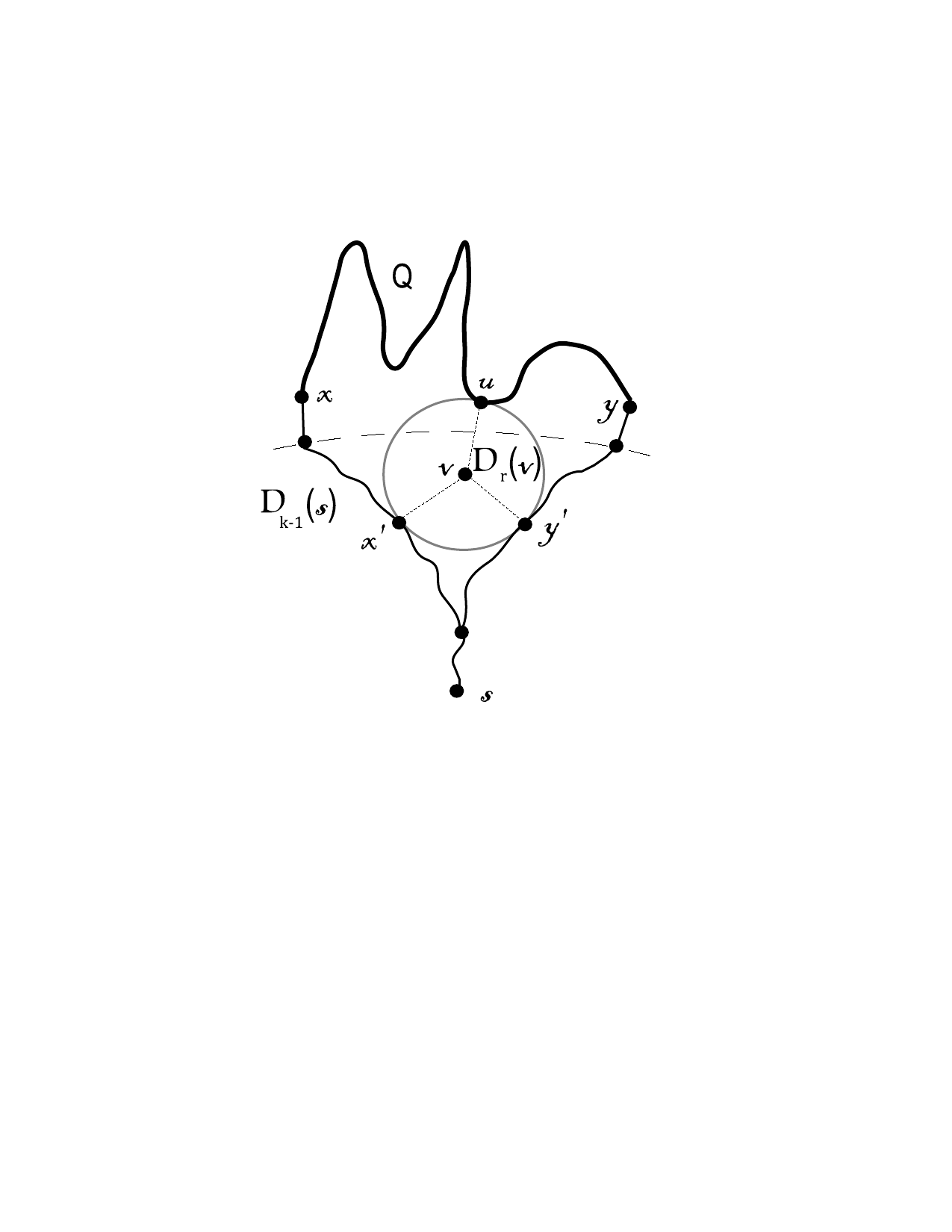}
        \end{center} \vspace*{-73mm}
        \caption{\label{fig:two} Illustrations to the proof of Lemma \ref{lm:ClusterDiam_mcw}. }  
      \end{minipage}
    \end{center}
   \vspace*{-5mm}
  \end{figure}

\begin{lemma} \label{lm:Clusterrad_BDS}
	For every graph $G$, every vertex $s$ of $G$, and every integer $k\ge 3$, $\mc_k(G)\le \rho_s(G)$.  In particular, $\mc_k(G)\leq \rho(G)$ for every graph $G$ and every integer $k\ge 3$. Furthermore, for any subset $X\subseteq V$ of vertices of $G$, a balanced disk separator $D_r(u)$ with $r\le \Delta_s(G)$ can be found in linear time. 
\end{lemma}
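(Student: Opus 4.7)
My plan is to exploit the tree-like structure of the layering tree $\Gamma(G,s)$ together with a centroid argument. The key preliminary observation I would establish first is that if $C$ is any cluster of $\mathcal{LP}(G,s)$, then removing the vertices of $C$ from $G$ separates $V\setminus C$ into groups that correspond exactly to the subtrees of $\Gamma(G,s)\setminus C$. The reason is that any edge $uv\in E(G)$ with $u,v$ in distinct clusters must have those clusters adjacent in $\Gamma(G,s)$ (indeed edges go only between consecutive layers, as two vertices of the same layer connected by an edge would share a cluster), so any path in $G\setminus C$ projects to a walk in $\Gamma(G,s)\setminus C$.

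Next, given any $X\subseteq V$ with $|X|=k\ge 3$, I would assign to every cluster $C$ the weight $|C\cap X|$. Since $\Gamma(G,s)$ is a tree with total weight $|X|$, a standard centroid argument produces a cluster $C^*$ such that each connected component of $\Gamma(G,s)\setminus C^*$ carries at most $|X|/2$ weight. By the previous paragraph, each connected component of $G[V\setminus C^*]$ is contained in the vertex union of one such subtree, and hence contains at most $|X|/2$ vertices of $X$.

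Now pick $v^*\in V$ with $C^*\subseteq D_{\rho_s(G)}(v^*)$, which exists by the definition of $\rho_s(G)$. Since $C^*\subseteq D_{\rho_s(G)}(v^*)$, every component of $G[V\setminus D_{\rho_s(G)}(v^*)]$ is contained in some component of $G[V\setminus C^*]$, so it still contains at most $|X|/2$ vertices of $X$. Thus $D_{\rho_s(G)}(v^*)$ is a balanced disk separator of $G$ with respect to $X$, giving $\mc_k(G)\le \rho_s(G)$. Taking the minimum over $s$ yields $\mc_k(G)\le \rho(G)$.

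For the algorithmic part, building $\mathcal{LP}(G,s)$ and $\Gamma(G,s)$ takes $O(n+m)$ time by the cited results; computing cluster weights $|C\cap X|$ and finding a centroid $C^*$ of the weighted tree takes $O(n)$ additional time. To meet the time bound without computing $\rho_s(G)$ (whose exact value needs $O(nm)$), I would simply pick an arbitrary vertex $v^*\in C^*$; by the definition of cluster-diameter, $C^*\subseteq D_{\Delta_s(G)}(v^*)$, so this disk has radius at most $\Delta_s(G)$ and serves as a balanced disk separator. The only subtle step is the separation claim in the first paragraph; once that is in place, everything reduces to a classical centroid decomposition, so I do not anticipate serious obstacles.
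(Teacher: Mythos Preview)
Your proposal is correct and follows essentially the same approach as the paper: build $\Gamma(G,s)$, weight clusters by $|C\cap X|$, take a weighted centroid (the paper calls it a median, citing \cite{goldman}), use that removing this cluster separates $G$ compatibly with the subtrees of $\Gamma(G,s)$, and then cover the cluster by a disk of radius $\rho_s(G)$ for the inequality and by a disk centered at an arbitrary vertex of the cluster for the linear-time statement. The only minor point to make explicit is that in the algorithmic part you do not know $\Delta_s(G)$ in linear time, so rather than outputting $D_{\Delta_s(G)}(v^*)$ you should run a single BFS from $v^*$ and take $r=\max_{w\in C^*}d_G(v^*,w)\le \Delta_s(G)$; this is exactly what the paper does.
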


\begin{proof}
Let $s$ be an arbitrary vertex of $G$ and $\mathcal{LP}(G,s)=\{L^i_1,\ldots,L^i_{p_i}:i=0,1,\dots,q\}$  be the layering partition of $G$ starting at $s$. Consider also the  layering tree $\Gamma:=\Gamma(G,s)$ of graph $G$ with respect to the layering partition $\mathcal{LP}(G,s)$.  

For a given subset $X\subseteq V$ of vertices of $G$, we can assign to each node $L_i^j$ of $\Gamma$  a weight $w_i^j: =|L_i^j\cap X|$. Clearly, $W:=\sum_{i=0,1,2,\ldots,q, j=1,2,\ldots,p_i}w_i^j$ is equal to  $|X|$.
It is known that every node-weighted tree $T$ with the total weight of nodes equal to $W$ has a node $x$, called a {\em median} of $T$,
such that the total weight of nodes in each subtree of $T\setminus \{x\}$ does not exceed $W/2$. Furthermore, such a node $x$ of $T$ can be found in $O(|V(T)|)$ time \cite{goldman}. 

Let $C=L_i^j$ $(i\in \{0,1,2,\ldots,q\}, j\in \{1,2,\ldots,p_i\})$ be a median node of weighted tree $\Gamma$.   Then, each subtree of $\Gamma\setminus \{C\}$ has total weight of nodes not exceeding $|X|/2$. It is clear from the construction of tree $\Gamma$ that the set $C\subseteq V$ separates in $G$ any two vertices that belong to clusters from different subtrees of $\Gamma\setminus \{C\}$. Consequently, $C$ is a balanced separator of $G$ with respect to $X$ as any connected component of $G[V\setminus C]$ has no more than $|X|/2$ vertices from $X$. Note that, given a graph $G$, such a cluster $C$ of layering partition ${\mathcal LP}(G,s)$ of $G$
can be found in linear time in the size of $G$. 

Since there is a vertex $v$ in $G$ such that $C\subseteq D_r(v)$ for $r\le \rho_s(G)$, clearly, $D_r(v)$ is a balanced disk separator of $G$ with respect to $X$. As this holds for an arbitrary subset $X\subseteq V$, we conclude $\mc_k(G)\le \rho_s(G)$ for any $k\ge 3$. 

It is not clear how to find in $o(nm)$ time a vertex $v$ such that $C\subseteq D_r(v)$ (we may need the entire distance matrix of $G$ to do that). Instead, we can take an arbitrary vertex $u$ from $C$ and run a breadth-first-search $BFS(u)$ from $u$ in $G$ until we reach the layer $L^i$ of $BFS(u)$ with minimum $i$ such that $C\subseteq \bigcup_{j=0}^iL^i$. Clearly, $i\leq \Delta_s(G)$ and $D_i(u)$ is a balanced disk separator of $G$ with respect to $X$. 
\qed
\end{proof}
In \cite{tree-spanner-appr}, such a balanced disk separator is used to obtain an $O(m\log n)$-time $O(\log n)$-approximation algorithm for the problem of finding a tree $t$-spanner with minimum $t$ of a given graph $G$.

It is easy to show that  in fact $\mc(G)\leq \tb(G)$ holds (see \cite{BerSey2024}). To show a stronger inequality $\mc_k(G)\le \tb(G)$ for every $k\ge 3$,  we will need 
a nice result from \cite{bal-clique-ch} on balanced clique-separators of chordal graphs.  See also Proposition   \ref{prop:bramble+} for an alternative proof. 

\begin{lemma} [\cite{bal-clique-ch}] \label{lm:clique_sep}
	For every chordal graph $G=(V,E)$ and every subset $X\subseteq V$ of vertices of $G$, there is a clique $C$ in $G$ such that if the vertices of $C$  are removed from $G$, every connected component in the graph induced by any remaining vertices has at most $|X|/2$ vertices from $X$.  
\end{lemma}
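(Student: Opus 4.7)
The plan is to reduce the claim to a centroid argument on the clique-tree of $G$, mirroring at the level of maximal cliques the centroid argument for layering partitions used in the proof of Lemma~\ref{lm:Clusterrad_BDS}. Since $G$ is chordal, Proposition~\ref{cliquetreechordalgr} supplies a clique-tree ${\cal CT}$ whose nodes are the maximal cliques of $G$ and for which, for every $v\in V(G)$, the set ${\cal CT}_v$ of bags containing $v$ induces a subtree of ${\cal CT}$.

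First, I would turn $X$ into a node-weighting of ${\cal CT}$ whose total weight is exactly $|X|$. For every $v\in X$, pick an arbitrary representative bag $r(v)\in V({\cal CT}_v)$ and set $w(K):=|\{v\in X : r(v)=K\}|$ for each maximal clique $K$. Choosing one representative per $v$, rather than naively giving weight $1$ to every maximal clique containing $v$, is what ensures that the weights sum to $|X|$ without overcounting. The classical weighted centroid lemma for trees (the same one invoked in the proof of Lemma~\ref{lm:Clusterrad_BDS}) then supplies a node $K^*\in V({\cal CT})$ such that every subtree of ${\cal CT}\setminus\{K^*\}$ carries total weight at most $|X|/2$.

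I would then take $C:=K^*$ and verify the balance condition. The key structural observation is that, for every $v\in V(G)\setminus K^*$, the subtree ${\cal CT}_v$ is connected and avoids the node $K^*$, so it lies in a unique component ${\cal T}(v)$ of ${\cal CT}\setminus\{K^*\}$. Moreover, if $uv\in E(G)$ with $u,v\notin K^*$, then some bag of ${\cal CT}$ contains both $u$ and $v$, hence ${\cal CT}_u\cap{\cal CT}_v\neq\emptyset$ and ${\cal T}(u)={\cal T}(v)$. Therefore each connected component of $G[V\setminus K^*]$ is confined to the vertices appearing in some single subtree ${\cal T}_i$ of ${\cal CT}\setminus\{K^*\}$, and the $X$-vertices it contains all satisfy $r(v)\in V({\cal T}_i)$. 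The centroid property bounds this count by $|X|/2$, as required.

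The main obstacle is purely bookkeeping: since a vertex of $G$ can belong to many maximal cliques, the weighting of ${\cal CT}$ must be defined so as to count each $v\in X$ exactly once, and this is precisely what the representative map $r(\cdot)$ accomplishes. Once this definitional issue is handled, the proof is a clean marriage of the subtree property characterising chordal clique-trees with the standard centroid lemma for weighted trees.
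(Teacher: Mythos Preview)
Your proposal is correct. Note, however, that the paper does not prove Lemma~\ref{lm:clique_sep} at all: it is quoted as a known result from~\cite{bal-clique-ch} and used as a black box in the proof of Lemma~\ref{lm:tb_mcw_k}. Your argument supplies a self-contained proof via a weighted centroid in the clique-tree, exactly paralleling the paper's own proof of Lemma~\ref{lm:Clusterrad_BDS} (which runs the same centroid argument on the layering tree $\Gamma(G,s)$ instead of on ${\cal CT}(G)$). The representative map $r(\cdot)$ is the right bookkeeping device, and your observation that for $v\notin K^*$ the subtree ${\cal CT}_v$ avoids $K^*$ and hence lies in a single component of ${\cal CT}\setminus\{K^*\}$ is precisely the standard separator property of tree-decompositions. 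So there is nothing to compare against in the paper itself; your proof is the natural one and is sound.
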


Using Lemma  \ref{lm:clique_sep}, we can prove the following {\em balanced-disk separator lemma}. 

\begin{lemma} \label{lm:tb_mcw_k} 
	For every graph $G=(V,E)$ and every subset $X\subseteq V$ of vertices of $G$, there is a vertex $v\in V$ such that if the vertices of disk $D_{\beta}(v)$, where $\beta=\tb(G)$, are removed from $G$, every connected component in the graph induced by any remaining vertices has at most $|X|/2$ vertices from $X$.  Consequently,  $\mc_k(G)\le \tb(G)$ holds for every graph $G$  and every integer $k\ge 3$.  
\end{lemma}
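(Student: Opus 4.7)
The plan is to reduce the lemma to a standard balanced separator statement for tree-decompositions and then translate the bag into a disk via the breadth condition. Specifically, I would start by fixing an optimal tree-decomposition $\cT(G)$ of breadth $\beta=\tb(G)$, so that for every bag $B\in V(\cT(G))$ there is some $v_B\in V$ with $B\subseteq D_\beta(v_B)$. The goal is to locate a bag $B^*$ that is a balanced vertex-separator of $G$ with respect to $X$; once found, the vertex $v:=v_{B^*}$ and the disk $D_\beta(v)\supseteq B^*$ will do the job.

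To find $B^*$, I would use the classical centroid-type argument on the tree $\cT=\cT(G)$. Root $\cT$ arbitrarily at $r$. For each $x\in X$, the set of bags containing $x$ forms a subtree $\cT_x$; let $c_x$ be the bag of $\cT_x$ that is closest to $r$, and define a weight $w(B):=|\{x\in X:c_x=B\}|$ so that $\sum_B w(B)=|X|$. Standard centroid theorem for node-weighted trees yields a bag $B^*\in V(\cT)$ such that every component of $\cT-\{B^*\}$ has total $w$-weight at most $|X|/2$.

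Next, I would verify that $B^*$ is indeed a balanced separator of $G$ with respect to $X$. For any component $\cT_i$ of $\cT-\{B^*\}$, let $V_i:=\bigl(\bigcup_{B\in \cT_i} B\bigr)\setminus B^*$; these $V_i$ partition $V\setminus B^*$ and, by the running-intersection property, are pairwise separated in $G$ by $B^*$. For any $x\in X\cap V_i$ we have $x\notin B^*$, so the connected subtree $\cT_x$ avoids $B^*$ and is therefore entirely contained in $\cT_i$; in particular $c_x\in \cT_i$. Hence $|X\cap V_i|\le \sum_{B\in \cT_i} w(B)\le |X|/2$. Since $B^*\subseteq D_\beta(v)$, the components of $G[V\setminus D_\beta(v)]$ refine the components of $G[V\setminus B^*]$, so each of them still contains at most $|X|/2$ vertices of $X$. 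Taking $|X|=k$ for any $k\ge 3$ then gives $\mc_k(G)\le \beta=\tb(G)$.

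The only non-cosmetic obstacle is the tree-decomposition separator argument itself — making sure the weighting assigns each $x\in X\setminus B^*$ to a bag lying in the same component of $\cT-\{B^*\}$ as all of $\cT_x$, so that the centroid bound on $w(\cT_i)$ transfers cleanly to the $G$-separation bound $|X\cap V_i|\le |X|/2$. This is precisely where rooting $\cT$ and choosing $c_x$ as the root-closest bag of $\cT_x$ is needed; everything else (centroid existence, breadth-to-disk translation, and passing from $B^*$-components to $D_\beta(v)$-components) is routine.
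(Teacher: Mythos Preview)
Your argument is correct. The centroid/weighting step is standard, and your care in assigning each $x\in X$ to the root-closest bag $c_x$ is exactly what is needed so that, whenever $x\notin B^*$, the subtree $\cT_x$ lies entirely in one component of $\cT-\{B^*\}$ and contributes its weight there. The passage from the bag $B^*$ to the larger disk $D_\beta(v)\supseteq B^*$ is then immediate.

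Your route, however, differs from the paper's. The paper does not run the centroid argument directly on $\cT(G)$; instead it forms the chordal supergraph $G^+$ obtained by turning every bag into a clique, observes that $\cT(G)$ is a clique-tree of $G^+$, and then invokes the Gilbert--Rose--Edenbrandt balanced clique-separator lemma for chordal graphs (Lemma~\ref{lm:clique_sep}) as a black box to obtain a clique $C\subseteq B\subseteq D_\beta(v_B)$ that balances $X$. Your proof is essentially what lies \emph{inside} that black box: the chordal separator lemma is itself proved by a centroid argument on the clique-tree, so you have unrolled the citation and worked directly on the tree-decomposition. The payoff is a self-contained argument that uses only the weighted-tree centroid theorem (already invoked elsewhere in the paper, in the proof of Lemma~\ref{lm:Clusterrad_BDS}); the paper's version is shorter on the page at the cost of importing an external result. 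The paper also notes (before Proposition~\ref{prop:bramble+}) a second alternative route via brambles and the Seymour--Thomas hitting-bag theorem; your approach is more elementary than either.
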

\begin{proof} Such a result for the case when $X=V$ was first proved in \cite{tree-spanner-appr}. We will adapt that proof to an arbitrary $X\subseteq V$.  
Let $G$ be a graph with $\tb(G)=\beta$ and $\cT(G)$ be its tree-decomposition  of breadth $\beta$. We can construct a new graph $G^+$ from $G$ by adding an edge between every two distinct non-adjacent vertices $x,y$ of $G$ such that a bag $B$ in $\cT(G)$ exists with $x,y\in B$.  Clearly, $G^+$ is a {\em supergraph} of the graph $G$ (each edge of $G$ is an edge of $G^+$, but $G^+$ may have some extra edges between non-adjacent vertices of $G$ contained in a common bag of $\cT(G)$). 
It is known (see, e.g., survey \cite{Bodlaender}) that $G^+$ is a chordal graph, $\cT(G)$ is a clique-tree of $G^+$,  and for each clique $C$ of $G^+$ there is a bag $B$ in $\cT(G)$ such that $C\subseteq B$. 
By Lemma  \ref{lm:clique_sep},  the chordal graph  $G^{+}$ contains a balanced clique-separator $C$.
Since $C$ is contained in a bag of $\cT(G)$, there must exist a vertex $v\in V(G)$ with $C\subseteq D_{\beta}(v,G)$. As the removal of  the vertices of $C$ from $G^+$ leaves no connected component in $G^+[V\setminus C]$ with more that $|X|/2$ vertices from $X$, and since $G^+$ is a supergraph of $G$, clearly, the removal of  the vertices of $D_{\beta}(v,G)$ from $G$ leaves no connected component in $G[V\setminus D_{\beta}(v,G)]$ with more that $|X|/2$ vertices from $X$.
\qed
\end{proof}

Note that, to find a balanced disk-separator $D_r(v)$ with $r\le \tb(G)$ of a graph $G$ with respect to a subset $X\subseteq V$, one does not need to have a tree-decomposition $\cT(G)$ of breadth $\tb(G)$. For a given graph $G=(V,E)$, a subset $X\subseteq V$ and a fixed vertex $v$ of $G$,  a balanced disk-separator $D_r(v)$ with minimum $r$ can be computed in $O(m)$ time (see \cite{tree-spanner-appr}). Hence, an overall  balanced disk-separator $D_r(v)$ with minimum $r$ of a graph $G$ with respect to a subset $X\subseteq V$ can be found in total $O(nm)$ time (one can run the algorithm from  \cite{tree-spanner-appr} for every $v\in V$ and pick a best vertex $v$).  In \cite{tree-spanner-appr}, such a balanced disk separator is used to obtain an $O(nm\log^2 n)$-time $(\log_2 n)$-approximation algorithm for the problem of finding a tree $t$-spanner with minimum $t$ of a given graph $G$.

Combining Proposition \ref{prop:dorisb}, Lemma  \ref{lm:ClusterDiam_mcw}, Lemma  \ref{lm:Clusterrad_BDS}, and Lemma  \ref{lm:tb_mcw_k},  
we get the following result. 
\begin{theorem} \label{th:mcw-delta-rho}
	For every graph $G$  and every vertex $s$ of $G$, the following inequalities hold: 
$$\mc(G)\leq \rho(G)\le\rho_s(G)\le \Delta_s(G)\le\widehat{\Delta}(G)\leq 6\cdot \mc(G),$$ 
$$\mc(G)\leq \tb(G)\leq \tl(G)\leq  \Delta_s(G)+1\leq 6\cdot \mc(G)+1.$$ 	
\end{theorem}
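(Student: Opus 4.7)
The plan is to verify Theorem \ref{th:mcw-delta-rho} by stitching together the already-established bounds, since almost every inequality in the two chains is either a definitional minimum/maximum or is provided by one of the preceding results. The main task is just to match each link of the chain with the correct source, so I will go left to right in each chain and indicate which statement justifies it.

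For the first chain, I would first note $\mc(G) \leq \rho(G)$ by applying Lemma \ref{lm:Clusterrad_BDS} with $k=3$: that lemma says $\mc_k(G) \leq \rho_s(G)$ for every $s$, and $\mc(G)=\mc_3(G)$, so taking the minimum over $s$ of the right-hand side gives $\mc(G)\le \rho(G)$. The step $\rho(G) \leq \rho_s(G)$ is just the definition $\rho(G)=\min_{s\in V}\rho_s(G)$. Next, $\rho_s(G) \leq \Delta_s(G)$ is the standard ``radius at most diameter'' inequality noted just after the definition of $\Delta_s$ and $\rho_s$ in Section \ref{sec:layer-partit}. Then $\Delta_s(G)\le \widehat{\Delta}(G)$ is the definition $\widehat{\Delta}(G) = \max_{s\in V}\Delta_s(G)$. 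Finally, $\widehat{\Delta}(G)\leq 6\cdot \mc(G)$ is exactly Lemma \ref{lm:ClusterDiam_mcw}.

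For the second chain, $\mc(G)\le \tb(G)$ is Lemma \ref{lm:tb_mcw_k} in the special case $k=3$. The inequality $\tb(G)\leq \tl(G)$ is the standard relation $1\leq \tb(G)\leq \tl(G)\leq 2\tb(G)$ recorded in Section \ref{sec:tl}. The bound $\tl(G)\leq \Delta_s(G)+1$ is the upper half of Proposition \ref{prop:dorisb}. Adding $1$ to both sides of Lemma \ref{lm:ClusterDiam_mcw} yields $\Delta_s(G)+1\leq 6\cdot\mc(G)+1$, closing the chain.

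There is no real obstacle here since every individual inequality has already been proved in the earlier development; the only thing to be careful about is the direction of the quantifiers in the first chain, namely that the ``$\min$'' definition of $\rho(G)$ sits correctly between $\mc(G)$ (which has no $s$-dependence) and $\rho_s(G)$ (which is for an arbitrary but fixed $s$), and symmetrically that $\widehat{\Delta}(G)$ is the correct upper endpoint to plug into Lemma \ref{lm:ClusterDiam_mcw}. Once these are checked, the theorem follows by concatenation, and no further argument is required. \qed
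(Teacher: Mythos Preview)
Your proposal is correct and follows exactly the same approach as the paper, which simply states that the theorem follows by combining Proposition \ref{prop:dorisb}, Lemma \ref{lm:ClusterDiam_mcw}, Lemma \ref{lm:Clusterrad_BDS}, and Lemma \ref{lm:tb_mcw_k}. You have carefully unpacked each link of the two chains and matched it to the right source, including the definitional steps for $\rho(G)$, $\widehat{\Delta}(G)$, and $\rho_s(G)\le\Delta_s(G)$; nothing is missing.
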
 

In \cite[Theorem 5.1]{BerSey2024}, it was shown that $\mc(G)\leq \tb(G)\leq \tl(G)\leq 6\cdot \mc(G)+3$ holds. 


From Theorem \ref{th:bnc-delta-tl} and Theorem  \ref{th:mcw-delta-rho}, we also get. 
\begin{corollary}  \label{cor:ineq-bnc-mcw}
For every graph $G$,   $\mc(G)\leq  4\cdot \bn(G)+2$ and $\bn(G)\le  3\cdot \mc(G)$. 
\end{corollary}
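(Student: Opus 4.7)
The corollary is a routine consequence of the bounds already established in Theorem \ref{th:bnc-delta-tl} and Theorem \ref{th:mcw-delta-rho}, so the plan is simply to chain together inequalities from the two theorems (and the lemmas they are built from), using the cluster-diameter parameter as a bridge between $\bn$ and $\mc$.

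For the first inequality $\mc(G)\le 4\cdot \bn(G)+2$, the plan is to start from Theorem \ref{th:mcw-delta-rho}, which gives $\mc(G)\le \rho(G)\le \rho_s(G)\le \Delta_s(G)$ for every vertex $s$ of $G$, and then apply Lemma \ref{lm:ClusterDiam_bnc} (equivalently, the left-hand inequality in the first displayed formula of Theorem \ref{th:bnc-delta-tl}), which gives $\Delta_s(G)\le 4\cdot \bn(G)+2$. Concatenating these two bounds yields $\mc(G)\le \Delta_s(G)\le 4\cdot \bn(G)+2$, as required.

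For the second inequality $\bn(G)\le 3\cdot \mc(G)$, the plan is to go in the other direction. From Lemma \ref{lm:BNC_ClusterDiam} (or Theorem \ref{th:bnc-delta-tl}), $\bn(G)=\BNC(G)\le \widehat{\Delta}(G)/2$, and from Lemma \ref{lm:ClusterDiam_mcw} (restated in Theorem \ref{th:mcw-delta-rho} as $\widehat{\Delta}(G)\le 6\cdot \mc(G)$) we obtain $\widehat{\Delta}(G)/2\le 3\cdot \mc(G)$. Combining these gives $\bn(G)\le \widehat{\Delta}(G)/2\le 3\cdot \mc(G)$.

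Since neither direction needs any new combinatorial argument beyond what has already been proved, there is no real obstacle here; the only thing to be careful about is picking the correct sides of the double inequalities in Theorem \ref{th:bnc-delta-tl} and Theorem \ref{th:mcw-delta-rho} so that the constants match the statement of the corollary. In particular, it is important to use the tight bound $\bn(G)\le \widehat{\Delta}(G)/2$ (rather than the looser $\bn(G)\le \tfrac{3}{2}\Delta_s(G)$) in order to reach the clean factor $3$ in the second inequality.
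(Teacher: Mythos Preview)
Your proposal is correct and follows exactly the paper's approach: the corollary is stated as an immediate consequence of Theorem~\ref{th:bnc-delta-tl} and Theorem~\ref{th:mcw-delta-rho}, and you have correctly identified the specific inequalities to chain (in particular using $\bn(G)\le \widehat{\Delta}(G)/2$ together with $\widehat{\Delta}(G)\le 6\cdot\mc(G)$ for the second bound).
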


It is easy to see that, for every graph $G$ and every $k\ge 2$,  $\mc_{2k}(G)\le\mc_{2k-1}(G)$. Indeed, consider a set $X=\{x_1,\dots,x_{2k-1},$ $x_{2k}\}\subseteq V$ and let $r:=\mc_{2k-1}(G)$. Then, there is a vertex $v\in V$ such that every connected component of $G[V\setminus D_r(v)]$ has at most $\frac{2k-1}{2}$, i.e., at most $k-1$, vertices from $X\setminus \{x_{2k}\}$. Hence, those connected components contain at most $k$ vertices from $X$. So, $\mc_{2k}(G)\le\mc_{2k-1}(G)$ must hold. 
Two induced cycles $C_3$ (with 3 vertices each) sharing one common vertex (a so-called {\em bow-graph}) gives an example of a graph $G$ with $\mc_3(G)=1$ and $\mc_4(G)=0$. So,  $\mc_{2k}(G)<\mc_{2k-1}(G)$ is possible for some graph $G$.
We do not know the relationship between $\mc_{2k}(G)$ and $\mc_{2k+1}(G)$, except that there are graphs $G$ and integers $k$ such that $\mc_{2k}(G)<\mc_{2k+1}(G)$ (e.g., for a block graph with three blocks and two articulation points, each block being a triangle, $\mc_{4}(G)=0<1=\mc_{5}(G)$ holds). However, from our results, it follows that, for every graph $G$, every $k\ge 3$ and $s\in V$, $\mc_k(G)\le\rho_s(G)\le\Delta_s(G)\le 6\cdot\mc_3(G)=6\cdot\mc(G)$ (which is of independent interest). 
Thus, for every $k\ge 3$, $\mc_k(G)$ is bounded from above by a (linear) function of $\mc(G)$. This leads to a natural question if $\mc_k(G)$ ($k> 3$) and $\mc_3(G)$ are coarsely equivalent parameters. Unfortunately, the answer is 'no'. By extending our bow-graph example, we can show that $\mc_k(G)$ ($k>3$), generally, cannot be bounded from below by a function of $\mc(G)$.  
Consider a graph $G$ consisting of two induced cycles $C_{6p}$ on $6p$ $(p\ge 1)$ vertices each,  sharing just one common vertex $v$. We have $n:=|V(G)|=12p-1$ and  $\mc_n(G)=0$ (we just need to remove vertex $v$ to partition $G$ into two connected components each having $6p-1$ vertices from $V(G)$). Considering also three vertices of one $C_{6p}$ that are pairwise at distance $2p$ from each other, we get $\mc(G)\ge p$.  

As we have mentioned earlier, a balanced disk-separator $D_r(v)$ with minimum $r$ of a graph $G$ with respect to any subset $X\subseteq V$ can be found in total $O(nm)$ time. Consequently, for every graph $G$, the parameter $\mc(G)$ can be computed in at most $n^3\cdot O(nm)=O(n^4m)$ time. Similarly, for every graph $G$, the parameter $\bn(G)$ can be computed in at most $O(n^3m)$ time. One can try to get better time complexities (using some special data structures), but since $\bn(G)$ and $\mc(G)$ are further from $\tl(G)$ than $\Delta_s(G)$ is, we did not pursue this line of investigation. 


\subsection{Brambles and Helly families of connected subgraphs or paths}\label{sec:br-Helly} 
A {\em bramble} of a graph $G$ is a family of connected subgraphs of $G$ that all {\em touch} each other: for every pair of disjoint subgraphs, there must exist an edge in $G$ that has one endpoint in each subgraph. The {\em order} of a bramble is the smallest size of a hitting set, a set of vertices of $G$ that has a nonempty intersection with each of the subgraphs. Brambles are used to characterize the tree-width of $G$: $k$ is the largest possible order among all brambles of $G$ if and only if $G$ has tree-width $k - 1$ \cite{SeyThom1993}.  

A family of 
subgraphs of $G$ 
is called a {\em Helly family} if every two subgraphs of the family intersect. Clearly, any Helly family of connected subgraphs of $G$ is a particular bramble of $G$. 
Below we define new properties for brambles and Helly families of connected subgraphs of $G$ that coarsely define the tree-breadth $\tb(G)$. 

Let ${\cal F}:=\{H_1,\dots,H_p\}$ be a family of subgraphs of $G$. We say that a disk $D_r(v)$ of $G$ intercepts all members of ${\cal F}$ if $V(H_i)\cap D_r(v)\neq \emptyset$ for every $i=1,\dots,p$.  
Denote by $\br(G)$ (the {\em bramble interception} radius  of $G$) the smallest integer $r$ such that for every bramble ${\cal F}$ of $G$, there is a disk $D_r(v)$ which intercepts all members of ${\cal F}$.  Denote by $\sh(G)$ (by $\ph(G)$) the smallest integer $r$ such that for every Helly family ${\cal F}$ of connected subgraphs (of paths, respectively) of $G$, there is a disk $D_r(v)$ which intercepts all members of ${\cal F}$. Call $\sh(G)$ ($\ph(G)$) the interception radius for Helly families of connected subgraphs (of  paths) of $G$. Such property for Helly families of disks of a graph were already considered in literature (see, e.g., \cite{HellyGroups,ChepoiEst,gap}). 

It turns out that all these three parameters are coarsely equivalent to tree-breadth. We prove this by involving the McCarty-width parameter and a result from \cite{SeyThom1993}. 

\begin{proposition}   \label{prop:bramble}
	For every $G$,  $\mc(G)\le \ph(G)\le\sh(G)\le\br(G)\le \tb(G)\le 6\cdot \mc(G)+1$. 
\end{proposition}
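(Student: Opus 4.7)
The plan is to establish the chain link by link, with only the last inequality requiring an already proved theorem.

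First I would handle $\mc(G)\le\ph(G)$, which is the only step with real content. Given three vertices $u,v,w$, I take ${\cal F}$ to be the family of all (simple) paths in $G$ whose endpoints are two of $\{u,v,w\}$. Any two such paths share at least one of $u,v,w$ as a common vertex (two $u$-$v$ paths share $u$, a $u$-$v$ path and a $v$-$w$ path share $v$, etc.), so ${\cal F}$ is a Helly family of paths. The definition of $\ph(G)$ then yields a disk $D_r(x)$ with $r=\ph(G)$ that intercepts every path in ${\cal F}$. Since every $u$-$v$, $v$-$w$, and $u$-$w$ path in $G$ contains a vertex of $D_r(x)$, no two of $u,v,w$ can lie in the same component of $G[V\setminus D_r(x)]$, so $\mc(G)\le r=\ph(G)$.

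Next, the inclusions $\ph(G)\le\sh(G)\le\br(G)$ are essentially by definition: paths are connected subgraphs, and any two members of a Helly family of connected subgraphs share a vertex and therefore trivially touch, so every such family is a bramble.

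For $\br(G)\le\tb(G)$, I would fix a tree-decomposition $\cT$ of $G$ of breadth $\tb(G)$ and a bramble ${\cal F}=\{H_1,\dots,H_p\}$. For each $i$, the set $T_i=\{B\in V(\cT):B\cap V(H_i)\neq\emptyset\}$ forms a subtree of $\cT$ (a standard consequence of $H_i$ being connected together with properties (2) and (3)). If $H_i$ and $H_j$ share a vertex, the bags containing it lie in $T_i\cap T_j$; if they are disjoint but joined by an edge $xy$, then some bag contains both $x$ and $y$ by property (2), so again $T_i\cap T_j\neq\emptyset$. Thus the subtrees pairwise intersect, and the Helly property of subtrees of a tree produces a bag $B$ belonging to every $T_i$. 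This $B$ contains a vertex from each $H_i$ and, by the breadth bound, lies inside some disk $D_{\tb(G)}(v_B)$, which therefore intercepts every member of ${\cal F}$. The final inequality $\tb(G)\le 6\cdot\mc(G)+1$ is exactly Theorem \ref{th:mcw-delta-rho}, closing the chain.

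The main obstacle, conceptually, is the first step: one has to pick a Helly family rich enough to force the separation condition of the McCarty-width, and taking \emph{all} paths between pairs in $\{u,v,w\}$ (not merely shortest ones) is what turns the interception conclusion into a genuine cut. The remaining links are either immediate from the definitions or a routine application of the Helly property for subtrees of a tree.
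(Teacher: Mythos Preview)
Your proof is correct and follows the paper's argument essentially verbatim: the same Helly family of all paths between pairs from $\{u,v,w\}$ for $\mc(G)\le\ph(G)$, the same definitional chain in the middle, and Theorem~\ref{th:mcw-delta-rho} to close. The only difference is that for $\br(G)\le\tb(G)$ the paper quotes the Seymour--Thomas result that some bag of any tree-decomposition meets every member of a bramble, whereas you spell out that argument directly via the Helly property of subtrees; this is the standard proof of the cited fact, so the two approaches coincide.
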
 
\begin{proof} Inequalities $\ph(G)\le\sh(G)\le\br(G)$ follow from definitions. Hence, by Theorem \ref{th:mcw-delta-rho}, we only need to show $\br(G)\le \tb(G)$ and $\mc(G)\le \ph(G)$. According to 
\cite{SeyThom1993},  for every bramble ${\cal F}$ of a graph $G$, there is a bag $B$ in every tree-decomposition of $G$ which intercepts all members of ${\cal F}$. Hence, for a tree-decomposition with minimum breadth, there is a bag $B$ and a vertex $v$ in $G$ with $B\subseteq D_r(v)$, $r\le \tb(G)$, such that the disk $D_r(v)$  of $G$  intercepts all members of ${\cal F}$. Consequently, $\br(G) \le \tb(G)$. 

Consider now any three vertices $x,y,z$ of $G$ and the family ${\cal F}$ of all paths of $G$ connecting pairs from $\{x,y,z\}$. The family ${\cal F}$ is a Helly family. If $\ph(G)=r$, then there is a disk $D_r(v)$ in $G$ that  intercepts all paths from ${\cal F}$. Consequently, no connected component of $G[V\setminus D_r(v)]$  contains two of $x,y,z$. The latter proves $\mc(G)\le \ph(G)$. 
\qed
\end{proof}

We can generalize the second part of the proof of Proposition \ref{prop:bramble} and show $\mc_k(G)\le \sh(G)$ for every graph $G$ and every $k\ge 3$. This provides also an alternative proof of Lemma  \ref{lm:tb_mcw_k}. 
\begin{proposition}   \label{prop:bramble+}
	For every graph $G$ and every $k\ge 3$,  $\mc_k(G)\le \sh(G)\le\br(G)\le \tb(G)$. 
\end{proposition}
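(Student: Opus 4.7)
The plan is to reduce the nontrivial inequality $\mc_k(G)\le \sh(G)$ to the construction, for each $X\subseteq V$ with $|X|=k$, of a single Helly family of connected subgraphs whose interception by a disk forces that disk to be a balanced separator with respect to $X$. The chain $\sh(G)\le\br(G)\le\tb(G)$ is already covered by (the proof of) Proposition~\ref{prop:bramble}, so no further work is needed on those two inequalities.

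For a fixed $X$, I would set $\cF_X := \{H : H \text{ is a connected subgraph of } G \text{ with } |V(H)\cap X|>k/2\}$. The first step is to verify that $\cF_X$ is a Helly family: for any $H_1,H_2\in\cF_X$, inclusion--exclusion inside $X$ yields $|V(H_1)\cap V(H_2)\cap X|\ge |V(H_1)\cap X|+|V(H_2)\cap X|-|X|>k/2+k/2-k=0$, so $V(H_1)\cap V(H_2)\ne\emptyset$. The threshold ``$>k/2$'' is exactly what the balanced-separator definition calls for, and this is precisely where the argument must depart from the $k=3$ case of Proposition~\ref{prop:bramble}; paths connecting pairs from $X$ no longer form a Helly family once $|X|\ge 4$, so one must allow arbitrary connected subgraphs instead.

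Then I would invoke the definition of $\sh(G)$ to produce a vertex $v$ such that $D_{\sh(G)}(v)$ intercepts every member of $\cF_X$, and conclude by contradiction. If $D_{\sh(G)}(v)$ were not a balanced disk separator for $X$, some connected component $K$ of $G[V\setminus D_{\sh(G)}(v)]$ would contain more than $|X|/2$ vertices of $X$; then $K\in\cF_X$, while $V(K)\cap D_{\sh(G)}(v)=\emptyset$ by construction, contradicting interception. Hence $D_{\sh(G)}(v)$ is a balanced disk separator of radius $\sh(G)$ with respect to $X$, giving $\mc_k(G)\le \sh(G)$. I do not anticipate any real obstacle: once the family $\cF_X$ is written down, both its Helly property and the contradiction step are one-line pigeonhole statements inside $X$.
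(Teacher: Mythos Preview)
Your proof is correct and is essentially the same as the paper's: your family $\cF_X$ of connected subgraphs meeting $X$ in more than $k/2$ vertices coincides exactly with the paper's union $\bigcup\{\cF_Y: Y\subset X,\ |Y|=\lfloor k/2\rfloor+1\}$ of connected subgraphs spanning some $(\lfloor k/2\rfloor+1)$-subset of $X$, and both the Helly check and the contradiction step are the same pigeonhole argument phrased slightly differently.
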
 
\begin{proof} We need only to show $\mc_k(G)\le \sh(G)$. Consider a subset $X=\{x_1,x_2,\dots,x_k\}$ of vertices of $G$ and any subset $Y\subset X$ containing $\lfloor\frac{k}{2}\rfloor+1$ vertices of $X$. Let 
${\cal F}_{Y}$ be the family of all connected subgraphs of $G$ spanning vertices of $Y$. For any two subsets $X'$ and $X''$ of $X$ containing  $\lfloor\frac{k}{2}\rfloor+1$ vertices of $X$ each, any subgraph from ${\cal F}_{X'}$ intersects any subgraph from ${\cal F}_{X''}$ (as $X'\cap X''\neq\emptyset$). Hence, ${\cal F}:= \cup \{{\cal F}_{Y}: Y\subset X, |Y|=\lfloor\frac{k}{2}\rfloor+1\}$ is a Helly family  of connected subgraphs of $G$.  
Hence, there must exist a vertex $v$ in $G$ such that disk $D_r(v)$, with $r\le \sh(G)$, intercepts each subgraph from ${\cal F}$.  The latter implies that no connected component of $G[V\setminus D_r(v)]$  contains more than $\lfloor\frac{k}{2}\rfloor$ vertices from $X$. Thus, $\mc_k(G)\le \sh(G)$ must hold. 
\qed
\end{proof}
For an induced cycle $C_6$ on six vertices, we have $\ph(C_6)=\sh(C_6)=1$ while $\br(C_6)=2$. So, $\sh(G)<\br(G)$ is possible for some graph $G$. 

\subsection{Distance $k$-approximating trees}\label{sec:adt}
By Proposition \ref{lem:cluster-diam} and Proposition \ref{prop:dorisb}, $\adt(G)\leq \Delta(G)\leq 3\cdot \tl(G)$. Hence, we need only to upperbound $\tl(G)$ by a linear function of $\adt(G)$. 

\begin{lemma}   \label{lm:tl-adt}
	For every graph $G$,  $\tl(G)\le 2\cdot \adt(G)+1$. 
\end{lemma}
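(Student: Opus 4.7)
The plan is to exhibit a tree-decomposition of $G$ all of whose bags have $G$-diameter at most $2k+1$, where $k=\adt(G)$. Let $T$ be a distance $k$-approximating tree of $G$, so that $|d_G(u,v)-d_T(u,v)|\le k$ for all $u,v\in V(G)$. The construction will use a subdivision $T'$ of $T$ obtained by replacing every edge $xy$ of $T$ by a length-two path through a fresh subdivision vertex $m_{xy}$. Then any two original vertices sit at even $T'$-distance from one another with $d_{T'}(u,v)=2 d_T(u,v)$. This extra resolution lets me work with what is effectively a half-integer radius in $T$, which is key to matching the bound $2k+1$.

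For each node $v'\in V(T')$ I would set $B_{v'}=\{u\in V(G):d_{T'}(u,v')\le k+1\}$ and take $T'$ as the underlying tree of the decomposition. Verifying the three tree-decomposition axioms is then routine: every $u\in V(G)$ sits in $B_u$; for every edge $uw\in E(G)$ one has $d_{T'}(u,w)\le 2(k+1)$ (because $d_T(u,w)\le d_G(u,w)+k=k+1$), and its midpoint in $T'$, which exists as $d_{T'}(u,w)$ is even, is at $T'$-distance at most $k+1$ from each endpoint; and the set $\{v'\in V(T'):u\in B_{v'}\}$ is just a $T'$-ball of radius $k+1$, hence a subtree of $T'$.

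The bag-diameter bound then falls out from the distance $k$-approximating property: for $u_1,u_2\in B_{v'}$, the triangle inequality in $T'$ yields $d_{T'}(u_1,u_2)\le 2(k+1)$, whence $d_T(u_1,u_2)=d_{T'}(u_1,u_2)/2\le k+1$, and finally $d_G(u_1,u_2)\le d_T(u_1,u_2)+k\le 2k+1$.

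The main obstacle is precisely getting the constant right. A more direct construction that places bags $D^T_r(v)$ on the nodes of $T$ itself forces $r\ge\lceil(k+1)/2\rceil$ in order to cover every $G$-edge, and thus only delivers bag $G$-diameter at most $2r+k=2\lceil(k+1)/2\rceil+k$, which for even $k$ equals $2k+2$ and is off by one. Subdividing $T$ sidesteps this parity issue uniformly and yields $\tl(G)\le 2\cdot\adt(G)+1$ in one shot.
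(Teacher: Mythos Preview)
Your proof is correct, but it takes a different route from the paper's. The paper observes that for every edge $uv$ of $G$ one has $d_T(u,v)\le k+1$, so $G$ is a spanning subgraph of the power $T^{k+1}$; since powers of trees are chordal, a clique-tree of $T^{k+1}$ serves directly as a tree-decomposition of $G$, and any two vertices in a common bag are adjacent in $T^{k+1}$, hence at $T$-distance at most $k+1$ and $G$-distance at most $2k+1$. Your argument instead builds the decomposition explicitly by hand: subdividing $T$ and taking $T'$-balls of radius $k+1$ as bags is exactly what is needed to realize, in effect, balls of half-integer radius $\frac{k+1}{2}$ in $T$, which is precisely the threshold that makes both the edge-covering axiom and the diameter bound tight simultaneously. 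The paper's proof is shorter because it delegates the combinatorics to the known structural fact about chordal powers of trees; yours is more self-contained and makes the underlying geometry (balls in $T$ of the ``right'' radius) completely transparent, at the cost of the subdivision device. Both constructions produce tree-decompositions whose bags are exactly the $T$-balls of radius $k+1$ (equivalently, the maximal cliques of $T^{k+1}$), so the two arguments are different presentations of the same object.
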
 
\begin{proof}
Let $G=(V,E(G))$, $\adt(G)=r$, and $T=(V,E(T))$ be a tree such that $|d_G(x,y)-d_T(x,y)|\le r$ for every $x,y\in V$. For every edge $uv\in E(G)$, $d_T(u,v)\le d_G(u,v)+r\le 1+r$ holds. Hence, $G$ is a spanning subgraph of graph $T^{r+1}$, where  $T^{r+1}$ is the $(r+1)^{st}$-power of $T$. It is known (see e.g., \cite{Andreas-book,golumbic}) that every power of a tree is a chordal graph. Consequently, there is a clique-tree $\cT(G)$ of $T^{r+1}$.  Clearly, $\cT(G)$ is a tree-decomposition of $G$ such that, for every two vertices $x$ and $y$ belonging to same bag 
of $\cT(G)$, $xy$ is an edge of $T^{r+1}$. Necessarily, $d_T(x,y)\le r+1$, by the definition of the $(r+1)^{st}$-power of  $T$. Furthermore, since $d_G(x,y)\le d_T(x,y)+r\le 2r+1$ for every $x,y$ belonging to same bag of $\cT(G)$, the length of the tree-decomposition $\cT(G)$ of $G$ is at most $2r+1=2\cdot \adt(G)+1$. 
\qed
\end{proof}
Combining Proposition \ref{lem:cluster-diam} and Proposition \ref{prop:dorisb} with Theorem \ref{th:bnc-delta-tl}, Lemma  \ref{lm:tl-adt} and Lemma \ref{lm:ClusterDiam_bnc}, we get the following result.  

\begin{theorem} \label{th:adt-tl-delta}
	For every graph $G$  and every vertex $s$ of $G$, the following inequalities hold: 
$$\adt(G)\leq \Delta(G)\le \Delta_s(G)\le\widehat{\Delta}(G)\leq 3\cdot \tl(G)\le 6\cdot \adt(G)+3,$$ 
$$\adt(G)\leq \Delta_s(G)\le 4\cdot \bn(G)+2, $$ 
$$\bn(G)\le 3\cdot \adt(G)+1.$$
\end{theorem}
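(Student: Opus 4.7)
The theorem packages three chains of inequalities, each of which I would obtain by composing existing results from this section with the new Lemmas \ref{lm:ClusterDiam_bnc}, \ref{lm:BNC_ClusterDiam} and \ref{lm:tl-adt}; no fundamentally new technique is needed.

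For the first chain $\adt(G)\le\Delta(G)\le\Delta_s(G)\le\widehat{\Delta}(G)\le 3\tl(G)\le 6\adt(G)+3$, the two middle inequalities are definitional, $\widehat{\Delta}(G)\le 3\tl(G)$ is Proposition \ref{prop:dorisb}, and $3\tl(G)\le 6\adt(G)+3$ is obtained by multiplying Lemma \ref{lm:tl-adt} by $3$. For the leftmost inequality $\adt(G)\le\Delta(G)$, I would fix $s^*$ attaining the minimum $\Delta(G)=\Delta_{s^*}(G)$ and consider the canonical tree $H$ of $\mathcal{LP}(G,s^*)$: by Proposition \ref{lem:cluster-diam}, $|d_G(x,y)-d_H(x,y)|\le\Delta_{s^*}(G)=\Delta(G)$ on every pair, at least whenever $\Delta(G)\ge 2$. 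The edge cases $\Delta(G)\in\{0,1\}$ would be handled separately: $\Delta(G)=0$ forces $G$ to be a tree (so $\adt(G)=0$), while $\Delta(G)=1$ forces every layering cluster of $\mathcal{LP}(G,s^*)$ to be a clique, which permits a simple modification of $H$ (using a vertex of the cluster itself, rather than the layering parent $x_C$, as the star centre inside each cluster) that delivers a tree $T$ on $V(G)$ with $|d_G-d_T|\le 1$.

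The middle chain $\adt(G)\le\Delta_s(G)\le 4\bn(G)+2$ then follows by reusing $\adt(G)\le\Delta(G)\le\Delta_s(G)$ together with Lemma \ref{lm:ClusterDiam_bnc}.

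For the last inequality $\bn(G)\le 3\adt(G)+1$, I would chain three bounds in succession: Lemma \ref{lm:BNC_ClusterDiam} gives $\bn(G)\le\widehat{\Delta}(G)/2$, Proposition \ref{prop:dorisb} gives $\widehat{\Delta}(G)\le 3\tl(G)$, and Lemma \ref{lm:tl-adt} gives $\tl(G)\le 2\adt(G)+1$. Composing yields $\bn(G)\le(3/2)(2\adt(G)+1)=3\adt(G)+3/2$, and since $\bn(G)$ is a non-negative integer by definition, the fractional $3/2$ rounds down to $1$. The principal obstacle throughout is the leftmost inequality $\adt(G)\le\Delta(G)$: the canonical tree $H$ only delivers the weaker bound $\adt(G)\le\max(2,\Delta(G))$ via Proposition \ref{lem:cluster-diam}, so the brief case analysis for $\Delta(G)\in\{0,1\}$ is needed to eliminate the $2$. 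Everything else is routine chaining, with the integrality of $\bn(G)$ providing the rounding that absorbs the final $1/2$ in the third chain.
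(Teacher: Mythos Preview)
The proposal is correct and follows the same approach as the paper's one-line proof (combining Propositions \ref{lem:cluster-diam}, \ref{prop:dorisb}, Lemma \ref{lm:tl-adt}, Lemma \ref{lm:ClusterDiam_bnc}, and Theorem \ref{th:bnc-delta-tl}). You are in fact more careful than the paper: you notice that Proposition \ref{lem:cluster-diam} literally only yields $\adt(G)\le\max\{2,\Delta(G)\}$ and treat $\Delta(G)\in\{0,1\}$ separately, and you make the integrality rounding $\bn(G)\le 3\,\adt(G)+\tfrac{3}{2}\Rightarrow \bn(G)\le 3\,\adt(G)+1$ explicit, both of which the paper passes over in silence.
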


\begin{corollary} 
\label{cor:ineq-tl-adt}
	For every graph $G$,   $\frac{\tl(G)-1}{2}\leq \adt(G)\leq 3\cdot \tl(G).$
\end{corollary}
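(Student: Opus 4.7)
The plan is to derive both inequalities of the corollary directly from Theorem~\ref{th:adt-tl-delta}, since that theorem already contains everything needed. The corollary is essentially a clean restatement of two lines of the theorem, dropping the intermediate parameters $\Delta(G)$, $\Delta_s(G)$, and $\widehat{\Delta}(G)$ that connect $\adt(G)$ and $\tl(G)$.

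For the upper bound $\adt(G)\leq 3\cdot \tl(G)$, I would point to the chain $\adt(G)\leq \Delta(G) \leq \widehat{\Delta}(G)\leq 3\cdot\tl(G)$ from Theorem~\ref{th:adt-tl-delta}. The conceptual content of this chain is that, for any start vertex $s$, the canonical tree $H$ built from the layering partition $\mathcal{LP}(G,s)$ satisfies $|d_G(x,y)-d_H(x,y)|\leq \Delta_s(G)$ (which follows from Proposition~\ref{lem:cluster-diam} once one notes that $d_H(x,y)-2\le d_G(x,y)\le d_H(x,y)+\Delta_s(G)$ and that the two-sided slack $\Delta_s(G)$ dominates the additive~$-2$), and then $\Delta_s(G)\leq 3\cdot \tl(G)$ is Proposition~\ref{prop:dorisb}. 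Taking the minimum over~$s$ yields $\adt(G)\leq \Delta(G)$.

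For the lower bound $\tfrac{\tl(G)-1}{2}\leq \adt(G)$, I would simply rearrange the inequality $\tl(G)\leq 2\cdot \adt(G)+1$ of Lemma~\ref{lm:tl-adt}. The key idea there is that if $T$ is a distance $r$-approximating tree of $G$ with $r=\adt(G)$, then every edge of $G$ has length at most $r+1$ in $T$, so $G$ is a spanning subgraph of the chordal graph $T^{r+1}$; a clique-tree of $T^{r+1}$ supplies a tree-decomposition of $G$ whose bags have $G$-diameter at most $2r+1$.

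There is no real obstacle — the proof is a two-line assembly of already-established inequalities. The only minor subtlety is to make sure the upper bound is cited from the sharpest link in the chain of Theorem~\ref{th:adt-tl-delta} (using $\Delta(G)$ rather than $\widehat{\Delta}(G)$ so that the constant is exactly $3$), which is immediate from $\Delta(G)\le \widehat\Delta(G)\le 3\cdot\tl(G)$ combined with $\adt(G)\le\Delta(G)$.
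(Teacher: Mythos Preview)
Your proposal is correct and follows exactly the paper's approach: the corollary is an immediate consequence of Theorem~\ref{th:adt-tl-delta} (for the upper bound via the chain $\adt(G)\le\Delta(G)\le\widehat{\Delta}(G)\le 3\cdot\tl(G)$) together with Lemma~\ref{lm:tl-adt} (for the lower bound, rearranging $\tl(G)\le 2\cdot\adt(G)+1$). The paper gives no separate proof of the corollary for precisely this reason.
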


Recall that in \cite[Theorem 4.1, Theorem 4.2, Theorem 4.5]{BerSey2024}, it was shown that  $\frac{\tl(G)-2}{2}\leq \ad(G)\leq 6\cdot \tl(G)$  and $\ad(G)\leq 24\cdot \bn(G)+18 $ hold.

From Theorem  \ref{th:mcw-delta-rho} and Theorem \ref{th:adt-tl-delta}, it also follows  $\adt(G)\le\widehat{\Delta}(G)\leq 6\cdot \mc(G)$ and $\mc(G)\le\widehat{\Delta}(G)\leq 6\cdot \adt(G)+3.$ The latter inequality can further be improved to $\mc(G)\le \frac{3\cdot \adt(G)+1}{2}.$ For this we will need the following interesting lemma.

\begin{lemma}   \label{lm:mcw-adt}
Let $G=(V,E)$ be a graph, $T=(V,E')$ be a distance $k$-approximating tree of $G$ with $k:=\adt(G)$.  For every $x,y\in V$, any path $P_G(x,y)$ of $G$ between $x$ and $y$, and any vertex $c$ from the path $P_T(x,y)$ of $T$ between $x$ and $y$, $d_G(c, P_G(x,y))\le \frac{3\cdot \adt(G)+1}{2}$ holds. 
\end{lemma}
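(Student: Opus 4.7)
The plan is to exploit the additive distortion together with the tree structure of $T$ via a discrete intermediate value argument applied to the path $P_G(x,y)$ viewed through $T$. Write $P_G(x,y) = (x = v_0, v_1, \ldots, v_\ell = y)$ and set $k := \adt(G)$. If some $v_i$ equals $c$, then $c \in P_G(x,y)$ and $d_G(c, P_G(x,y)) = 0$, so assume $c \notin \{v_0, \ldots, v_\ell\}$.

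Next I would consider the forest $T \setminus \{c\}$. Each $v_i$ belongs to exactly one connected component of this forest. Since $c$ lies strictly between $x$ and $y$ on the tree path $P_T(x,y)$, the vertex $x = v_0$ lies in the component $T_x$ containing $x$, while $y = v_\ell$ lies in a different component $T_y$ containing $y$. As the sequence of components assigned to $v_0, v_1, \ldots, v_\ell$ begins in $T_x$ and ends in $T_y \ne T_x$, there exists an index $i$ such that $v_i$ and $v_{i+1}$ belong to different components of $T \setminus \{c\}$. By the defining property of trees, the unique $T$-path from $v_i$ to $v_{i+1}$ must then pass through $c$, giving
\[
d_T(v_i, c) + d_T(c, v_{i+1}) \;=\; d_T(v_i, v_{i+1}).
\]

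Now I would invoke the additive distortion. Because $v_iv_{i+1}$ is an edge of $G$, we have $d_G(v_i, v_{i+1}) = 1$, and the assumption $|d_G(u,v) - d_T(u,v)| \le k$ yields $d_T(v_i, v_{i+1}) \le k + 1$. Combining with the previous displayed equality,
\[
d_T(v_i, c) + d_T(c, v_{i+1}) \;\le\; k+1,
\]
so at least one of $d_T(v_i, c)$, $d_T(v_{i+1}, c)$ is at most $(k+1)/2$. Call this vertex $w$. Applying the additive distortion once more (now in the other direction),
\[
d_G(w, c) \;\le\; d_T(w, c) + k \;\le\; \frac{k+1}{2} + k \;=\; \frac{3k+1}{2}.
\]
Since $w \in P_G(x,y)$, this gives $d_G(c, P_G(x,y)) \le d_G(w, c) \le (3\adt(G)+1)/2$, as required.

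The only place requiring a moment's thought is the intermediate value step, which uses nothing more than the fact that $c$ separates the components of $T\setminus\{c\}$ containing $x$ and $y$; the rest is a one-line application of the two defining inequalities of a distance $k$-approximating tree. There is no genuine obstacle.
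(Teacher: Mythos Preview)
Your proof is correct and follows essentially the same approach as the paper's: both remove $c$ from $T$, locate an edge of $P_G(x,y)$ whose endpoints lie in different components of $T\setminus\{c\}$, use the additive distortion to bound the $T$-distance from $c$ to one endpoint by $(k+1)/2$, and then apply the distortion once more in the other direction. The only cosmetic difference is that the paper phrases the intermediate value step in terms of the single subtree $T_y$ rather than arbitrary components.
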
 
\begin{proof} Removing $c$ from $T$, we separate $x$ from $y$. Let $T_y$ be the subtree of $T[V\setminus \{c\}]$ containing $y$. Since $x\notin T_y$, we can find an edge $ab$ of $P_G(x,y)$ with $a\in T_y$ and
$b\notin T_y$. Therefore, the path $P_T(a,b)$ must go via $c$. If $d_T(c,a) > (k+1)/2$ and $d_T(c,b) >(k+1)/2$, 
then $d_T(a,b) = d_T(a,c) + d_T(c,b) > k+1$ and, since $d_G(a,b) = 1$, we obtain a contradiction with the 
assumption that $T$ is a distance $k$-approximating tree of $G$ (with  $d_T (a,b)\le d_G(a,c)+k= k+1$). Hence $d_T(c,P_G(x,y)) \le \min\{d_T(c,a),d_T(c,b)\} \le (k+1)/2$. 
Let $z$ be a vertex of $P_G(x,y)$ such that $d_T(c,P_G(x,y))=d_T(c,z)$. We have 
$d_G(c,P_G(x,y))\le d_G(c,z)\le d_T(c,z)+k= d_T (c,P_G(x,y))+k\le (k+1)/2+k$. 
\qed
\end{proof}

\begin{corollary}   \label{cor:mcw-adt}
	For every graph $G$,  $\adt(G)\le 6\cdot \mc(G)$ and $\mc(G)\le \frac{3\cdot \adt(G)+1}{2}$. 
\end{corollary}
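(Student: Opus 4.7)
The first inequality $\adt(G) \le 6 \cdot \mc(G)$ requires no new work: the plan is to simply chain two earlier results, namely $\adt(G) \le \widehat{\Delta}(G)$ from Theorem \ref{th:adt-tl-delta} and $\widehat{\Delta}(G) \le 6 \cdot \mc(G)$ from Theorem \ref{th:mcw-delta-rho}. I would just state this composition as a one-line observation.

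The substantive direction is $\mc(G) \le \frac{3 \cdot \adt(G) + 1}{2}$, and the plan is to use Lemma \ref{lm:mcw-adt} together with the notion of the median of three vertices in a tree. Fix a distance $k$-approximating tree $T=(V,E')$ of $G$ realizing $k = \adt(G)$; note $V(T) = V(G)$, which is crucial. Given any three vertices $u, v, w \in V(G)$, let $c$ be their median in $T$, i.e., the unique vertex of $T$ lying on each of the three $T$-paths $P_T(u,v)$, $P_T(u,w)$, and $P_T(v,w)$ (existence and uniqueness of $c$ is a standard fact about trees). Since $c \in V(T) = V(G)$, it is a legitimate center for a disk in $G$.

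Now I would apply Lemma \ref{lm:mcw-adt} three times. Because $c$ lies on $P_T(u,v)$, the lemma gives $d_G(c, P_G(u,v)) \le \frac{3k+1}{2}$ for every $u$--$v$ path $P_G(u,v)$ in $G$; by symmetry the same bound holds for every $u$--$w$ path and every $v$--$w$ path. Setting $r := \frac{3\cdot \adt(G)+1}{2}$, it follows that the disk $D_r(c,G)$ intercepts every path in $G$ between any two of $u, v, w$. Consequently, after removing $D_r(c,G)$ from $G$, no two of $u, v, w$ can remain in a common connected component. Since $u, v, w$ were arbitrary, this exactly witnesses $\mc(G) \le r$, as required.

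There is no real obstacle here; the argument is a clean corollary of the lemma, and the only ingredient one must be careful about is that $c$ genuinely is a vertex of $G$ (which uses $V(T) = V(G)$, a defining property of distance $k$-approximating trees as opposed to the more general trees allowed for $\ad(G)$). This is also precisely why this bound sharpens the weaker $\mc(G) \le 6 \cdot \adt(G) + 3$ that one would get by merely going through $\widehat{\Delta}(G)$.
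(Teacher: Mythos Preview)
Your proposal is correct and matches the paper's own proof essentially verbatim: the first inequality is obtained by composing $\adt(G)\le \widehat{\Delta}(G)$ with $\widehat{\Delta}(G)\le 6\cdot\mc(G)$, and the second by taking the median $c$ in $T$ of the three given vertices and applying Lemma~\ref{lm:mcw-adt} once for each pair. Your explicit remark that $c\in V(T)=V(G)$ is what makes $c$ a legitimate disk center is a helpful clarification the paper leaves implicit.
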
 
\begin{proof} The first inequality follows from Theorem \ref{th:mcw-delta-rho} and Theorem \ref{th:adt-tl-delta}. The second one follows from Lemma    \ref{lm:mcw-adt}. 
Indeed, let $x,y,z$ be three arbitrary vertices of $G$ and $T$ be a distance $k$-approximating tree of $G$ with $k:=\adt(G)$.  Let $c$ be the unique vertex
of $T$ that is on the intersection of paths $P_T(x,y)$, $P_T(x,z)$ and $P_T(y,z)$. Since $c$ belongs to all three paths, applying Lemma \ref{cor:mcw-adt} three times,
we infer that $d_G(c,P_G(x,y))\le \frac{3\cdot \adt(G)+1}{2}, d_G(c,P_G(x,z))\le \frac{3\cdot \adt(G)+1}{2}$ and $d_G(c,P_G(y,z))\le \frac{3\cdot \adt(G)+1}{2}$ for every path $P_G(x,y)$ between $x$ and $y$, for every path $P_G(x,z)$ between $x$ and $z$, and  for every path $P_G(z,y)$ between $z$ and $y$.  So, no connected component of $G[V\setminus D_r(c)]$, $r=\frac{3\cdot \adt(G)+1}{2}$,  contains two of $x,y,z$.
\qed
\end{proof}

\subsection{$K$-Fat $K_3$-minors}\label{sec:fat}
Here, we give an alternative proof for a result from 
\cite{GeorPapa2023} that a graph $G$ has no $K$-fat $K_3$-minor for some constant $K>0$ if and only if $G$ is $(1,C)$-quasi-isometric to a tree for some constant $C$. 
Our Corollary \ref{cor: adt-fm} improves the constant $C$. 

\begin{lemma} \label{lm:ClusterDiam_fat}
	Let $G$ be a graph  and $s$ be a vertex  of $G$. If $\Delta_s(G)\ge 5K$ then $G$ has a $K$-fat $K_3$-minor.  
\end{lemma}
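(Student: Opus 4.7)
The plan is to take vertices $x$ and $y$ in a common cluster $L_j^k$ of $\mathcal{LP}(G,s)$ realising $d_G(x,y)=\Delta_s(G)\ge 5K$, a connecting path $Q$ outside $D_{k-1}(s)$, and the deepest vertex $s'$ on $P(s,x)\cap P(s,y)$; I set $k':=d_G(s',x)=d_G(s',y)$ and observe that $d_G(x,y)\le d_G(x,s')+d_G(s',y)=2k'$ gives $k'\ge \Delta_s(G)/2\ge 5K/2$. The two shortest paths $A_1:=P(s',x)$ and $A_3:=P(s',y)$ together with $Q$ form a cycle $C$ with three ``corners'' $x$, $y$, $s'$, on which I will build the $K$-fat $K_3$-minor.

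I would set $\alpha:=k'-2K$ (which is at least $K/2>0$) and define the three connected subgraphs as follows: $H_x$ is the sub-path of $A_1$ of length $K$ starting at $x$, with far endpoint called $x^K$; $H_y$ is the analogous sub-path of $A_3$ starting at $y$, with far endpoint $y^K$; and $H_{s'}$ is the V-shape at $s'$ obtained by joining the sub-paths of $A_1$ and $A_3$ of length $\alpha$ based at $s'$, whose outer endpoints are $a$ on $A_1$ and $b$ on $A_3$. The three simple paths of the minor are $P_{x,y}:=Q$ (with endpoints $x\in H_x$ and $y\in H_y$), $P_{x,s'}$ the sub-path of $A_1$ from $x^K$ to $a$, and $P_{y,s'}$ the sub-path of $A_3$ from $y^K$ to $b$, each having its two endpoints in the corresponding $H_i$'s.

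The verification of the $K$-fat conditions combines three standard ingredients. First, since $A_1$ and $A_3$ are shortest paths in $G$, distances along them coincide with $d_G$, which directly yields $d_G(H_x,H_{s'})\ge k'-K-\alpha=K$ and $d_G(H_x,H_y)\ge d_G(x,y)-2K\ge 3K$. Secondly, every vertex of $Q$ has $d_G(s,\cdot)\ge k$ while every vertex of $H_{s'}$ has $d_G(s,\cdot)\le (k-k')+\alpha=k-2K$, so a triangle inequality through $s$ gives $d_G(Q,H_{s'})\ge 2K$, and by the same reasoning $d_G(P_{x,y},P_{x,s'})\ge K$ and $d_G(P_{x,y},P_{y,s'})\ge K$. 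Thirdly, the shortest-path structure of $A_1$ makes every vertex of $P_{x,s'}$ lie at $G$-distance $\ge K$ from $x$ and hence at distance $\ge d_G(x,y)-2K\ge 3K$ from $H_y$, and symmetrically for $P_{y,s'}$ versus $H_x$.

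The hard part will be the pairwise path distance $d_G(P_{x,s'},P_{y,s'})\ge K$, since both paths emanate from the V at $s'$ and could \emph{a priori} be shortcut-connected in $G$ bypassing $s'$. This is where the choice $\alpha:=k'-2K$ is essential: for any $v\in P_{x,s'}$ at $A_1$-distance $k'-\alpha_v$ from $x$ with $\alpha_v\ge \alpha$, and any $w\in P_{y,s'}$ with corresponding $\alpha_w\ge \alpha$, concatenating the $A_1$-subpath from $x$ to $v$, a shortest $v$-$w$ path in $G$, and the $A_3$-subpath from $w$ to $y$ yields a walk from $x$ to $y$ of length $(k'-\alpha_v)+d_G(v,w)+(k'-\alpha_w)\ge d_G(x,y)=\Delta_s(G)\ge 5K$, whence $d_G(v,w)\ge 5K-2k'+2\alpha=K$. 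Thus the hypothesis $\Delta_s(G)\ge 5K$ exactly rules out short $G$-shortcuts across $s'$, and the construction certifies a $K$-fat $K_3$-minor.
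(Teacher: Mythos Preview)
Your proof is correct and follows the same overall strategy as the paper: build the three branch sets of a $K_3$-minor around the ``corners'' $x$, $y$, and the central part of the configuration, then verify $K$-fatness via the triangle inequality and the layering property $d_G(s,Q)\ge k$.

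The constructions differ in detail. The paper takes full disks $H_x=G[D_K(x)]$, $H_y=G[D_K(y)]$, $H_s=G[D_{\ell-2K}(s)]$ as branch sets and therefore must trim $Q$ to a subpath $Q(x'',y'')$ meeting each disk exactly once; you instead take thin path-based branch sets (subpaths of $A_1,A_3$ for $H_x,H_y$ and a V-shape at the deepest common vertex $s'$ for $H_{s'}$), which lets you use $Q$ untrimmed since $Q\cap A_1=\{x\}$ and $Q\cap A_3=\{y\}$ automatically. Introducing $s'$ costs you the extra observation that $A_1\cap A_3=\{s'\}$, but it pays off in your verification of the ``hard'' inequality $d_G(P_{x,s'},P_{y,s'})\ge K$: your concatenated-walk argument $(k'-\alpha_v)+d_G(v,w)+(k'-\alpha_w)\ge d_G(x,y)\ge 5K$ with $\alpha_v,\alpha_w\ge\alpha=k'-2K$ is exactly the same computation as the paper's contradiction $d_G(x,y)<2K+K+2K=5K$, just parametrised along the geodesics rather than via the disk radii. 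One minor slip: your claim that $P_{x,s'}$ is at distance $\ge 3K$ from $H_y$ should read $\ge 2K$ (you need the upper bound $d_G(x,u)\le 2K$, not the lower bound $\ge K$), but this does not affect the conclusion.
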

\begin{proof}
Let $\mathcal{LP}(G,s)$ be the layering partition of $G$ starting at $s$. Consider vertices $x$ and $y$ from a cluster of $\mathcal{LP}(G,s)$ with $d_G(x,y)=\Delta_s(G)$, and let $\ell:=d_G(s,x)=d_G(s,y)$. Choose also a path $Q$ connecting $x$ and $y$ outside the disk $D_{\ell-1}(s)$ and arbitrary shortest paths $P(s,x)$ and $P(s,y)$ connecting $s$ with $x$ and $y$, respectively. 
Since $\Delta_s(G)\ge 5K$, we have $d_G(x,y)\ge 5K$. We construct a $K$-fat $K_3$-minor of $G$ in the following way. 

We know that $\ell$ must be greater that $2K$ (otherwise, $d_G(x,y)\le d_G(x,s)+d_G(s,y)\le 2K+2K=4K$, contradicting with $d_G(x,y)\ge 5K$). 
Consider vertices $x'$ and $s_x$ on path $P(x,s)$ at distance $K$ and $2K$ from $x$, respectively, i.e., with $d_G(x,x')=d_G(x',s_x)=K$. Similarly, consider  
vertices $y'$ and $s_y$ on path $P(y,s)$ at distance $K$ and $2K$ from $y$, respectively. As three connected subgraphs of $G$ choose $H_x:=G[D_K(x)], H_y:=G[D_K(y)]$ and $H_s:=G[D_{\ell-2K}(s)]$. As three paths choose a subpath $P(x',s_x)$ of $P(x,s)$ between $x'$ and $s_x$, a subpath $P(y',s_y)$ of $P(y,s)$ between $y'$ and $s_y$ and a subpath $Q(x'',y'')$ of $Q$ between vertices $x'',y''\in Q$, where $x''$ is the vertex of $Q\cap D_K(x)$ which maximizes $d_Q(x,x'')$ and $y''$ is the vertex of $Q\cap D_K(y)$ which maximizes $d_Q(y,y'')$. Clearly, those three connected subgraphs and three paths form a $K_3$-minor in $G$ (note that $d_Q(x'',y'')\ge 3K$ since, otherwise, $d_G(x,y)\le d_G(x,x'')+d_G(x'',y'')+d_G(y'',y)<K+3K+K=5K$, which is impossible).  It remains to show that it is a $K$-fat $K_3$-minor. 

We have  $d_G(V(H_x),V(H_s))=d_G(D_K(x),D_{\ell-2K}(s))= K$ since $d(x,s)=\ell=K+K+(\ell-2K).$ Similarly, $d_G(V(H_y),V(H_s))= K$. Furthermore,  $d_G(V(H_x),V(H_y))=d_G(D_K(x),D_{K}(y))\ge 3K$ since $d(x,y)\ge 5K.$ If $d_G(P(x',s_x),Q(x'',y''))<K$ holds, then $d_G(s,Q(x'',y''))\le d_G(s,x')+d_G(P(x',s_x),Q(x'',y''))<\ell-K+K=\ell$. The latter implies $d_G(s,Q)<\ell$, which is impossible. So, $d_G(P(x',s_x),Q(x'',y''))\ge K$ must hold. Similarly, $d_G(P(y',s_y),Q(x'',y''))\ge K$ must hold. If $d_G(P(x',s_x),P(y',s_y))<K$ holds, then $d_G(x,y)\le d_G(x,s_x)+ d_G(P(x',s_x),P(y',s_y))+d_G(s_y,y)<2K+K+2K=5K$, contradicting with  $d_G(x,y)\ge 5K$. If $d_G(V(H_x),P(y',s_y))=d_G(D_K(x),P(y',s_y))<K$, then $d_G(x,y)\le d_G(x,x')+ d_G(D_K(x),P(y',s_y))+d_G(s_y,y)<K+K+2K=4K$, which is impossible. 
So, $d_G(V(H_x),P(y',s_y))\ge K$ and, by symmetry, $d_G(V(H_y),P(x',s_x))\ge K$. Finally, $d_G(V(H_s),Q(x'',y''))=d_G(D_{\ell-2K}(s),Q(x'',y''))\ge 2K$ since, otherwise, we get $d_G(s,Q)<\ell$, which is impossible. 

Thus, constructed $K_3$-minor of $G$ is $K$-fat. 
\qed
\end{proof} 

From Lemma \ref{lm:ClusterDiam_fat} and Theorem \ref{th:adt-tl-delta}, we have the following corollary. 

\begin{corollary} \label{cor: adt-fm}
If $G$ has no $K$-fat $K_3$-minor, then  $\Delta_s(G)\le 5K-1$ for every vertex $s$ of $G$. In particular, $\adt(G)\le 5K-1$. 
\end{corollary}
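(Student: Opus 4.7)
The plan is to derive this corollary as an immediate contrapositive of Lemma \ref{lm:ClusterDiam_fat}, combined with the chain of inequalities already established in Theorem \ref{th:adt-tl-delta}. No new combinatorial argument is needed; the work has been done in the lemma, and the corollary merely repackages it.

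First, I would handle the bound on $\Delta_s(G)$. Fix an arbitrary vertex $s$ of $G$ and suppose, for contradiction, that $\Delta_s(G) \ge 5K$. Then Lemma \ref{lm:ClusterDiam_fat} applies directly and produces a $K$-fat $K_3$-minor in $G$, contradicting the hypothesis that $G$ has no such minor. Since $\Delta_s(G)$ is a nonnegative integer, the strict inequality $\Delta_s(G) < 5K$ sharpens to $\Delta_s(G) \le 5K-1$. Because $s$ was arbitrary, this bound holds uniformly over all vertices of $G$, and in particular $\widehat{\Delta}(G) \le 5K-1$.

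For the second conclusion, I would invoke the inequality $\adt(G) \le \Delta(G) \le \Delta_s(G)$ from Theorem \ref{th:adt-tl-delta}, which immediately gives $\adt(G) \le 5K-1$.

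There is no real obstacle here since Lemma \ref{lm:ClusterDiam_fat} does all the heavy lifting; the only subtlety is the strict-versus-weak inequality in the integer conversion step, which is routine. If a cleaner presentation is desired, one can state the proof as a single sentence: the contrapositive of Lemma \ref{lm:ClusterDiam_fat} gives $\Delta_s(G) \le 5K-1$, and composing with $\adt(G) \le \Delta_s(G)$ from Theorem \ref{th:adt-tl-delta} finishes the proof.
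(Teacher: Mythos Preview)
Your proposal is correct and matches the paper's approach exactly: the paper derives the corollary in one line from Lemma \ref{lm:ClusterDiam_fat} (via contrapositive) together with the inequality $\adt(G)\le\Delta_s(G)$ from Theorem \ref{th:adt-tl-delta}, which is precisely what you do.
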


\begin{lemma} \label{lm:mcw_fat}
	Let $G$ be a graph with $\mc(G)=r$. Then, $G$ has no $K$-fat $K_3$-minor for $K>2r$.  
\end{lemma}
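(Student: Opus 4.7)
\bigskip

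\noindent\textbf{Proof plan for Lemma \ref{lm:mcw_fat}.}
The plan is to argue by contradiction: assume $G$ has a $K$-fat $K_3$-minor with $K>2r$ witnessed by connected subgraphs $H_1,H_2,H_3$ and paths $P_{1,2},P_{2,3},P_{1,3}$, pick one vertex in each $H_i$, and show that no disk of radius $r$ can balance-separate these three vertices, contradicting $\mc(G)=r$.

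First I would pick $u_i\in V(H_i)$ for $i=1,2,3$. Applying the definition of $\mc(G)$ to the triple $u_1,u_2,u_3$, I obtain a vertex $x\in V$ such that no connected component of $G[V\setminus D_r(x)]$ contains two of $u_1,u_2,u_3$. The union $H_1\cup H_2\cup H_3\cup P_{1,2}\cup P_{2,3}\cup P_{1,3}$ is a connected subgraph of $G$ containing all three $u_i$, so removing $D_r(x)$ must in particular separate this subgraph into pieces putting the $u_i$ into distinct components.

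Next I would exploit the fact that $D_r(x)$ has diameter at most $2r<K$, so it cannot simultaneously meet any two of the six subgraph/path objects whose mutual distance is at least $K$. Concretely, the fatness conditions give: (a) $D_r(x)$ meets at most one of $V(H_1),V(H_2),V(H_3)$; (b) $D_r(x)$ meets at most one of the three paths $P_{i,j}$; and (c) if $D_r(x)$ meets some $V(H_i)$ and some path $P_{j,k}$, then $i\in\{j,k\}$ (since $d_G(P_{j,k},V(H_\ell))\ge K>2r$ for $\ell\notin\{j,k\}$). From (a)-(c), I conclude that there exist two indices $i\neq j$ in $\{1,2,3\}$ such that $D_r(x)$ meets none of $V(H_i), V(H_j), P_{i,j}$. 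Then the natural $u_i$-to-$u_j$ walk inside $H_i\cup P_{i,j}\cup H_j$ avoids $D_r(x)$ entirely, placing $u_i$ and $u_j$ in the same component of $G[V\setminus D_r(x)]$, a contradiction.

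The only subtle step is establishing the existence of the ``untouched triangle side'' $(i,j)$ in the final paragraph; I would verify it by a short case analysis on whether $D_r(x)$ meets one of the $V(H_i)$'s and whether it meets a path, showing that in every case at least one side of the triangle is completely disjoint from $D_r(x)$. This is the main obstacle but it is largely bookkeeping once (a)-(c) are in place. The resulting bound $K>2r$ matches the statement.
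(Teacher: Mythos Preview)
Your proof is correct and follows the same overall strategy as the paper's: assume a $K$-fat $K_3$-minor exists, apply the McCarty-width condition to one vertex chosen in each branch set $H_i$, and use the fact that the separating disk $D_r(x)$ has diameter at most $2r<K$ to derive a contradiction with the fatness conditions.

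Your execution is actually cleaner than the paper's. The paper chooses, inside each $H_i$, a path $Q_i$ connecting the two attachment points of the incident $P_{i,j}$'s and takes the middle vertex $v_i$ of $Q_i$; it then forms three explicit $v_i$--$v_j$ paths (half of $Q_i$, then $P_{i,j}$, then half of $Q_j$), picks intersection points $w_{i,j}\in D_r(u)\cap P(v_i,v_j)$, and runs a case analysis on where the $w_{i,j}$ lie. Your argument bypasses this by working directly with the six objects $V(H_1),V(H_2),V(H_3),P_{1,2},P_{1,3},P_{2,3}$: since any two of them at distance $\ge K$ cannot both meet $D_r(x)$, the disk touches at most one $H_i$ and at most one $P_{j,k}$, and (c) forces these to share an index. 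From this it is immediate that some entire ``side'' $H_i\cup P_{i,j}\cup H_j$ is disjoint from $D_r(x)$, giving the contradiction. The case analysis you flag as the only subtle step is indeed short: after relabeling so that the (at most one) touched branch set is $H_1$, property~(c) forces the (at most one) touched path to be $P_{1,2}$ or $P_{1,3}$, and then the side $(2,3)$ is untouched. Both arguments yield the same bound $K>2r$; yours simply avoids the auxiliary paths $Q_i$ and middle vertices.
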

\begin{proof} Assume $G$ has a $K$-fat $K_3$-minor ($K>2r$) formed by three connected subgraphs $H_1$, $H_2$, $H_3$ and three simple paths $P_{1,2}$, $P_{2,3}$, $P_{1,3}$ such that for each $i,j\in \{1,2,3\}$ ($i\neq j$), 
\begin{itemize}
    \item[(1)]      $P_{i,j}$ has one end in $H_i$ and the other end in $H_j$ and  $|P_{i,j}\cap V(H_i)|=|P_{i,j}\cap V(H_j)|=1$, and
    \item[(2)]      $d_G(V(H_i),V(H_j))\ge K$,  $d_G(P_{i,j},V(H_k))\ge K$ ($k\in \{1,2,3\}, k\neq i, j$) and the distance between any two paths $P_{1,2}$, $P_{2,3}$, $P_{1,3}$ is at least $K$.  
\end{itemize}   
Let \begin{itemize}
\item $P_{1,2}\cap V(H_1)=\{x_1\}$, $P_{1,2}\cap V(H_2)=\{x_2\}$, 
\item $P_{2,3}\cap V(H_2)=\{y_2\}$, $P_{2,3}\cap V(H_3)=\{y_3\}$,  
\item $P_{1,3}\cap V(H_1)=\{z_1\}$, $P_{1,3}\cap V(H_3)=\{z_3\}$.   
\end{itemize}  
Vertices $x_1$ and $z_1$ are connected in $H_1$ via a path of length at least $K$. Choose such a path $Q_1(x_1,z_1)$ and let $v_1$ be a middle vertex of $Q_1(x_1,z_1)$. 
Similarly, choose a path $Q_2(x_2,y_2)$ in $H_2$ and a middle vertex $v_2$ of $Q_2(x_2,y_2)$, and choose a path $Q_3(y_3,z_3)$ in $H_3$ and a middle vertex $v_3$ of  $Q_3(y_3,z_3)$. 

Since $\mc(G)=r$, for vertices $v_1,v_2,v_3$ there must exist a vertex $u$ in $G$ such that no connected component of $G[V\setminus D_r(u)]$  contains two of $v_1,v_2,v_3$. We show that this is not possible due to $K>2r$ and distance requirements listed in (2). 

Disk $D_r(u)$ needs to intercept each of the following three paths: 
\begin{itemize}
\item $P(v_1,v_2):=Q_1(v_1,x_1)\cup P_{1,2}\cup Q_2(x_2,v_2)$, where $Q_1(v_1,x_1)$ and $Q_2(x_2,v_2)$ are subpaths of $Q_1(z_1,x_1)$ and $Q_2(x_2,y_2)$, respectively, connecting corresponding vertices, 
\item $P(v_2,v_3):=Q_2(v_2,y_2)\cup P_{2,3}\cup Q_3(y_3,v_3)$, where $Q_2(v_2,y_2)$ and $Q_3(y_3,v_3)$ are subpaths of $Q_2(x_2,y_2)$ and $Q_3(y_3,z_3)$, respectively, connecting corresponding vertices, 
\item $P(v_1,v_3):=Q_1(v_1,z_1)\cup P_{1,3}\cup Q_3(z_3,v_3)$, where $Q_1(v_1,z_1)$ and $Q_3(z_3,v_3)$ are subpaths of $Q_1(x_1,z_1)$ and $Q_3(z_3,y_3)$, respectively, connecting corresponding vertices. 
\end{itemize}  
Choose $w_{1,2}\in D_r(u)\cap P(v_1,v_2)$, $w_{1,3}\in D_r(u)\cap P(v_1,v_3)$, $w_{2,3}\in D_r(u)\cap P(v_2,v_3)$. 
Vertices $w_{1,2},w_{1,3},w_{2,3}$ are pairwise at distance at most $2r<K$ in $G$.  If $w_{1,2}\in P_{1,2}$ then, by distance requirements in (2),  $w_{2,3}$ can neither be in  $P_{2,3}$  nor in $H_3$. Hence, $w_{2,3}$ is in $Q_2(v_2,y_2)\subset V(H_2)$. Similarly, $w_{1,3}$ must be in $Q_1(v_1,z_1)\subset V(H_1)$. But then, we get $d_G(V(H_1),V(H_2))\le 2r<K$, which is not possible. So, by symmetry, we can assume $w_{1,2}\notin P_{1,2}$, $w_{1,3}\notin P_{1,3}$ and  $w_{2,3}\notin P_{2,3}$, i.e., vertices $w_{1,2}, w_{1,3}, w_{2,3}$ are in subgraphs $H_1$, $H_2$, $H_3$. However, since no one of $H_1$, $H_2$, $H_3$ can have all three vertices $w_{1,2}, w_{1,3}, w_{2,3}$, for some $i,j\in \{1,2,3\}, i\neq j$, $d_G(V(H_i),V(H_j))\le 2r<K$ holds, which is impossible. 

Thus, $G$ cannot have any $K$-fat $K_3$-minor for $K>2r$.  
\qed 
\end{proof} 

Denote by $\mf(G)$ the largest $K>0$ such that $G$ has a $K$-fat $K_3$-minor. Call it the {\em $K_3$-minor fatness} of $G$.  We obtain the following theorem from Corollary    \ref{cor:mcw-adt}, Corollary  \ref{cor: adt-fm}, Lemma \ref{lm:mcw_fat}, Proposition \ref{prop:Papaaoglu}, Theorem \ref{th:bnc-delta-tl}, and Theorem  \ref{th:mcw-delta-rho}.

\begin{theorem} \label{th:mf-tl-mcw}
    For every graph $G$ and every vertex $s$ of $G$, $$\frac{\mf(G)}{2}\le \mc(G)\le\tb(G)\le\tl(G)\le \Delta_s(G)+1\le 5\cdot\mf(G),$$  $$\frac{\mf(G)-1}{2}\le \frac{2\cdot\mc(G)-1}{3}\le \adt(G)\le \Delta_s(G)\le 5\cdot \mf(G)-1,$$ $$\mf(G)\le 2\cdot\bn(G)+1\le \Delta_s(G)+1\le 5\cdot \mf(G).$$ 
\end{theorem}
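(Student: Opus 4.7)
The statement assembles three chains of inequalities that together route every previously introduced parameter through $\mf(G)$. My plan is to verify each chain link by link, mostly by citing results already proved earlier in Sections \ref{sec:bnc}--\ref{sec:fat}; only two of the links (those tying $\mf$ directly to $\mc$ and to $\bn$) require fresh arguments, both obtained by applying Lemma \ref{lm:mcw_fat} and Lemma \ref{lm:ClusterDiam_fat} with a carefully chosen value of $K$.

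For the first chain, the middle segment $\mc(G)\le\tb(G)\le\tl(G)\le\Delta_s(G)+1$ is exactly Theorem \ref{th:mcw-delta-rho} together with Proposition \ref{prop:dorisb}. The left endpoint $\mf(G)/2\le\mc(G)$ is a direct restatement of Lemma \ref{lm:mcw_fat}: setting $r=\mc(G)$, the lemma forbids $K$-fat $K_3$-minors for $K>2r$, so the maximum such $K$ satisfies $\mf(G)\le 2\mc(G)$. The right endpoint $\Delta_s(G)+1\le 5\mf(G)$ is a contrapositive application of Lemma \ref{lm:ClusterDiam_fat}: picking the largest integer $K$ with $5K\le\Delta_s(G)$ produces a $K$-fat $K_3$-minor, so $\mf(G)\ge\lfloor\Delta_s(G)/5\rfloor$, from which the claimed numerical form follows.

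For the second chain, $\frac{2\mc(G)-1}{3}\le\adt(G)$ is Corollary \ref{cor:mcw-adt}, and $\adt(G)\le\Delta_s(G)$ follows from Proposition \ref{lem:cluster-diam} or, more directly, from Theorem \ref{th:adt-tl-delta}. The leftmost link $\frac{\mf(G)-1}{2}\le\frac{2\mc(G)-1}{3}$ I would obtain by composing the already-established $\mf(G)\le 2\mc(G)$ with arithmetic, and the rightmost link $\Delta_s(G)\le 5\mf(G)-1$ is again Lemma \ref{lm:ClusterDiam_fat} applied to the first integer $K$ exceeding $\mf(G)$.

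The only genuinely new ingredient is $\mf(G)\le 2\bn(G)+1$ in the third chain, which I plan to prove directly. Given any $K$-fat $K_3$-minor with connected parts $H_1,H_2,H_3$ and connector paths $P_{1,2},P_{2,3},P_{1,3}$, I would take the endpoints $x_1\in H_1$ and $x_2\in H_2$ of $P_{1,2}$, a shortest $x_1x_2$-path $P$ in $G$, and form the alternative walk $R=P_{1,3}\cup Q_3\cup P_{3,2}$ (with $Q_3$ a path inside $H_3$) extended by a path in $H_2$ reaching $x_2$. The fatness conditions $d_G(V(H_i),V(H_j))\ge K$ and $d_G(P_{i,j},V(H_k))\ge K$ force every vertex on $P$ to be at distance at least roughly $(K-1)/2$ from $R$, and in particular the midpoint of $P$ must be, yielding $\bn(G)\ge (K-1)/2$ and hence $\mf(G)\le 2\bn(G)+1$. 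The remaining link $2\bn(G)+1\le\Delta_s(G)+1$ comes from Lemma \ref{lm:BNC_ClusterDiam} (via $\bn(G)\le\widehat{\Delta}(G)/2$, together with the freedom in the choice of $s$ via Proposition \ref{prop:ClustDiamAtAnys}), and the closing $\Delta_s(G)+1\le 5\mf(G)$ is Lemma \ref{lm:ClusterDiam_fat} once more. The main obstacle is precisely the direct bottleneck argument: keeping track of which portion of the alternative route is closest to the midpoint of $P$, without losing the fatness gap in the case analysis, requires an argument in the spirit of (but shorter than) the proof of Lemma \ref{lm:mcw_fat}; once that is in place the theorem reduces to assembling the previously proved inequalities.
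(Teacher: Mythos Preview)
Your assembly of the three chains from the earlier lemmas and corollaries is exactly what the paper does: it derives Theorem~\ref{th:mf-tl-mcw} by combining Corollary~\ref{cor:mcw-adt}, Corollary~\ref{cor: adt-fm}, Lemma~\ref{lm:mcw_fat}, Theorem~\ref{th:bnc-delta-tl}, Theorem~\ref{th:mcw-delta-rho}, \emph{and} Proposition~\ref{prop:Papaaoglu}. The one place you diverge is the link $\mf(G)\le 2\,\bn(G)+1$. The paper does not prove this; it simply invokes Proposition~\ref{prop:Papaaoglu}, i.e.\ the bound from \cite{GeorPapa2023}. Your plan is to prove it directly, and this is where your sketch has a genuine gap.

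Concretely, in your argument you take a shortest $x_1x_2$-path $P$ (with $x_i\in H_i\cap P_{1,2}$), its midpoint $w$, and an alternative route $R$ passing through $H_1$, $P_{1,3}$, $H_3$, $P_{2,3}$, $H_2$. You then claim the fatness conditions force $d_G(w,R)\gtrsim (K-1)/2$. But the fatness hypotheses constrain the distances of $H_3$, $P_{1,3}$, $P_{2,3}$ only to the \emph{given} path $P_{1,2}$, not to an arbitrary shortest $x_1x_2$-path $P$. Nothing prevents $P$ from entering $H_3$; if $w\in V(H_3)$ then $d_G(w,R)$ can be $0$, since $R$ traverses $H_3$ and you have no control over how the $z_3y_3$-path inside $H_3$ sits relative to $w$ (e.g.\ $w$ could be a cut vertex of $H_3$ separating $z_3$ from $y_3$). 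Your own observation that ``every vertex on $P$'' is far from $R$ already fails at the endpoints $x_1,x_2\in R$. Routing through $\mc(G)$ in the spirit of Lemma~\ref{lm:mcw_fat} does not rescue the constant either: combining $\mf(G)\le 2\,\mc(G)$ with $\mc(G)\le 4\,\bn(G)+2$ only yields $\mf(G)\le 8\,\bn(G)+4$, not $2\,\bn(G)+1$. So for this link you should, as the paper does, cite Proposition~\ref{prop:Papaaoglu}; the remainder of your proof then coincides with the paper's.
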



\subsection{Cycle bridging properties}\label{sec:cbc}
The following {\em characteristic cycle property} immediately follows from the definition of a chordal graph $G$: {\em For every simple cycle $C$ 
of a chordal graph $G$ and every vertex $v\in C$, the two neighbors in $C$ of $v$ are adjacent or there is a third vertex in $C$ that is adjacent to $v$.} We can generalize this property and show that its generalized version is coarsely equivalent to tree-length. 

Let $C$ be a simple cycle of $G$, 
$v$ be a vertex of $C$ and $x,y$ be two vertices of $C$ with $d_C(x,v)=d_C(y,v)=k$ (assuming $C$ is long enough). We call $x,y$ the {\em $k$-neighbors of $v$ in $C$}.  Denote by $\cbc(G)$ (call it the {\em cycle bridging constant} of $G$) the minimum $k$ ($k\ge 1$) such that for every simple cycle $C$ of $G$ 
and every vertex $v$ of $C$, if $d_G(x,y)=2k$ holds for the two $k$-neighbors $x,y$ of $v$ in $C$ (resulting in $|C|\ge 4k$), then there is a vertex $z\in C$ satisfying $d_G(v,z)\le k< d_C(v,z)$ (the latter inequality just says that $z$ is on $C\setminus P_v(x,y)$, where $P_v(x,y)$ is a part of $C$ between $x$ and $y$ containing $v$). Clearly, $\cbc(G)$ of a chordal graph $G$ is 1.  The following two lemmas hold. 

\begin{lemma}   \label{lm:bnc-cbc}
	For every graph $G$,  $\bn(G)\le \cbc(G)$. 
\end{lemma}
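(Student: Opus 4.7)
The plan is to prove the contrapositive: whenever $\bn(G)>k$, the defining property of $\cbc(G)\le k$ fails, so $\cbc(G)>k$. Taking $k:=\cbc(G)$ then forces $\bn(G)\le\cbc(G)$.

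Suppose $\bn(G)>k$. By Corollary~\ref{cor:ineq}, $\bn(G)=\BNC(G)$, so there exist vertices $u,v$, a shortest $u$--$v$ path $P=(u=p_0,p_1,\dots,p_{2\ell}=v)$ of even length with middle vertex $w=p_\ell$, and a $u$--$v$ path $Q$ with $V(Q)\cap D_k(w)=\emptyset$. Because $u\in V(Q)$ and $d_G(u,w)=\ell$, necessarily $\ell>k$; hence $p^*:=p_{\ell-k}$ and $q^*:=p_{\ell+k}$ are distinct vertices with $d_G(p^*,q^*)=2k$ (as $P$ is a geodesic).

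The heart of the argument is the construction of a simple cycle $C$ through $w$, $p^*$, $q^*$. Let $P':=P[p^*,q^*]$, the length-$2k$ subpath of $P$ containing $w$, which lies entirely in $D_k(w)$. Consider the $p^*$-to-$q^*$ walk $W:=P[p^*,u]\cdot Q\cdot P[v,q^*]$. I will check that every internal vertex of $W$ lies outside $D_k(w)$: the interior of $P[p^*,u]$ consists of vertices $p_j$ with $j<\ell-k$, satisfying $d_G(p_j,w)=\ell-j>k$ since $P$ is a shortest path, and the interior of $P[v,q^*]$ is handled symmetrically; meanwhile $V(Q)\cap D_k(w)=\emptyset$ by choice. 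Extracting any simple $p^*$-to-$q^*$ subpath $R'$ of $W$ therefore produces a path internally disjoint from $P'\subseteq D_k(w)$, so $C:=P'\cup R'$ is a simple cycle of length $|P'|+|R'|\ge 2k+d_G(p^*,q^*)=4k$. Thus $p^*$ and $q^*$ are precisely the two $k$-neighbors of $w$ in $C$, and $d_G(p^*,q^*)=2k$, matching the premise of the $\cbc$-condition for $(C,w)$ at level $k$.

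The proof then concludes by observing that no $z\in C$ can satisfy $d_G(w,z)\le k<d_C(w,z)$: the inequality $d_C(w,z)>k$ forces $z$ into the interior of $R'$, which lies outside $D_k(w)$, so $d_G(w,z)>k$. Hence $\cbc(G)>k$, completing the contrapositive. The only subtlety is the bookkeeping that keeps $W$ internally disjoint from $P'$, which hinges on $\ell>k$ (so that $p^*\ne q^*$ and the cycle is nondegenerate) and on $P$ being a shortest path; everything else is a direct application of the definitions.
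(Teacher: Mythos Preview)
Your proof is correct and follows essentially the same route as the paper's: assume $\bn(G)>k$ with $k=\cbc(G)$, take the bad shortest path with middle vertex $w$ and the avoiding path $Q$, and splice them into a simple cycle on which the $k$-neighbors of $w$ are at $G$-distance $2k$ but no vertex on the far arc lies in $D_k(w)$, contradicting the definition of $\cbc(G)$. The only cosmetic difference is that the paper places the splice points at distance $k+1$ from the center while you place them at distance $k$; both work, and your bookkeeping is clean.
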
 

\begin{proof}
Assume $\bn(G)>\cbc(G):=k$. By the definition of the bottleneck constant, for some vertices $x,y$ of $G$ at even distance from each other, some shortest path $P(x,y)$ connecting $x$ and $y$ and the middle vertex $s$ of $P(x,y)$,  there must exist a path $Q$ in $G[V\setminus D_{k}(s)]$ connecting $x$ and $y$. Necessarily, $d_G(x,s)=d_G(y,s)\ge k+1$. 
Consider a vertex $x'$  on a subpath of $P(x,y)$ between $x$ and $s$ and a vertex $y'$  on a subpath of $P(x,y)$ between $y$ and $s$ with $d_G(x',s)=d_G(y',s)=k+1$. We have that vertices $x'$ and $y'$ with $d_G(x',y')=2k+2$ and $d_G(x',s)=d_G(s,y')=k+1$ are connected in $G[V\setminus D_{k}(s)]$ by  $P(x,x')\cup Q\cup P(y,y')$, where $P(x,x')$ and $P(y,y')$ are subpaths of $P(x,y)$ between corresponding vertices. Extract from  $P(x,x')\cup Q\cup P(y,y')$ a simple subpath $Q'$ connecting $x'$ and $y'$ outside the disk $D_{k}(s)$. The union of $Q'$ and $P(x',y')$ forms a simple cycle in which the $k$-neighbors of $s$ are at distance $2k$ in $G$ but $s$ does not have any vertex in $Q'$ at distance at most $k$ in $G$. The latter contradicts with $\cbc(G)=k$. 
\qed
\end{proof}

\begin{lemma}   \label{lm:Delta-cbc}
	For every graph $G$, $\cbc(G)\le \lceil\frac{\widehat{\Delta}(G)+1}{2}\rceil\le \frac{\widehat{\Delta}(G)}{2}+1$. 
\end{lemma}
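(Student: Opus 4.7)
The plan is to set $k := \lceil\frac{\widehat{\Delta}(G)+1}{2}\rceil$ and verify directly that the cycle bridging property holds with parameter $k$. Fix a simple cycle $C$ in $G$ and a vertex $v\in C$, and let $x,y$ be the $k$-neighbors of $v$ on $C$ with $d_G(x,y)=2k$ (so $|C|\ge 4k$). The first step is to pin down the $G$-distances from $v$: since
\[
2k \;=\; d_G(x,y) \;\le\; d_G(x,v)+d_G(v,y) \;\le\; d_C(x,v)+d_C(v,y) \;=\; 2k,
\]
equality forces $d_G(v,x)=d_G(v,y)=k$. In particular, $x$ and $y$ lie in the layer $L^k$ of the layering partition $\mathcal{LP}(G,v)$ starting at $v$.

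The second step is to invoke the choice of $k$ to compare $d_G(x,y)$ with the cluster-diameter. By definition of $k$ we have $2k \ge \widehat{\Delta}(G)+1 > \widehat{\Delta}(G) \ge \Delta_v(G)$, so
\[
d_G(x,y) \;=\; 2k \;>\; \Delta_v(G).
\]
Therefore $x$ and $y$ cannot belong to a common cluster of $\mathcal{LP}(G,v)$, since otherwise their $G$-distance would be at most $\Delta_v(G)$.

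The third and decisive step is to apply this to the arc $Q$ of $C$ running from $x$ to $y$ while avoiding $v$. Two vertices of $L^k$ lie in the same cluster of $\mathcal{LP}(G,v)$ precisely when they are joined by a path outside $D_{k-1}(v)$; so if $Q$ stayed outside $D_{k-1}(v)$ it would place $x$ and $y$ in the same cluster, contradicting the previous step. Hence some vertex $z\in Q$ satisfies $d_G(v,z)\le k-1 < k$. Because $d_G(v,x)=d_G(v,y)=k$, such a $z$ is distinct from $x$ and $y$, so it lies in the interior of $Q$, and consequently $d_C(v,z) \ge k+1 > k$. This is exactly the bridging certificate $d_G(v,z)\le k < d_C(v,z)$, proving $\cbc(G)\le k$.

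The second inequality $\lceil\frac{\widehat{\Delta}(G)+1}{2}\rceil \le \frac{\widehat{\Delta}(G)}{2}+1$ is a routine parity check. I do not anticipate a real obstacle: the entire argument reduces the bridging property to a single application of the cluster-diameter bound for the layering partition at $v$, once the distances $d_G(v,x)=d_G(v,y)=k$ have been fixed. The only subtlety is ensuring that the witness $z$ landing inside $D_{k-1}(v)$ is \emph{strictly} interior to $Q$; this is forced automatically by $d_G(v,x)=d_G(v,y)=k > k-1$, so the strict inequality $d_C(v,z)>k$ comes for free.
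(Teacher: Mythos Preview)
Your proof is correct and follows essentially the same approach as the paper's own argument: set $k=\lceil(\widehat{\Delta}(G)+1)/2\rceil$, use the layering partition at $v$ to force $x$ and $y$ into different clusters of $L^k$, and conclude that the far arc $Q$ must dip into $D_{k-1}(v)$. You are in fact slightly more careful than the paper, which simply asserts $d_G(x,v)=d_G(y,v)=k$ and that $z\in C\setminus P_v(x,y)$ without spelling out the triangle-inequality squeeze or the reason $z\neq x,y$; your added justifications for these points are welcome but do not change the underlying strategy.
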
 

\begin{proof}
Let $k:=\lceil\frac{\widehat{\Delta}(G)+1}{2}\rceil$, $C$ be a simple cycle of $G$,  $v$ be an arbitrary vertex of $C$, and assume that $d_G(x,y)=2k$ holds for the two $k$-neighbors $x,y$ of $v$ in $C$. Consider the layering partition $\mathcal{LP}(G,v)$ of $G$ starting at $v$. Since $d_G(x,v)=d_G(y,v)=k$, vertices $x$ and $y$ belong to the same layer $L^{k}$ of the layering of $G$ with respect to $v$. Since $d_G(x,y)=2k=2\lceil\frac{\widehat{\Delta}(G)+1}{2}\rceil>\widehat{\Delta}(G)$, by the definition of $\widehat{\Delta}(G)$, $x$ and $y$ cannot belong to the same cluster from $L^{k}$. By the definition of clusters, every path connecting $x$ with $y$ in $G$ must have a vertex in $D_{k-1}(v)$. Hence, there must exist also a vertex $z\in C\setminus P_v(x,y)$ such that $d_G(v,z)\le k-1<k$, where $P_v(x,y)$ is a part of $C$ between $x$ and $y$ containing $v$.  
Consequently, $\cbc(G)\le \lceil\frac{\widehat{\Delta}(G)+1}{2}\rceil\le \frac{\widehat{\Delta}(G)}{2}+1$.  \qed
\end{proof}

Combining Lemma  \ref{lm:bnc-cbc} and Lemma  \ref{lm:Delta-cbc}  with Theorem \ref{th:bnc-delta-tl}, we get the following result. 

\begin{theorem} \label{th:cbc-tl-delta}
For every graph $G$, the following inequalities hold:  
 $$\frac{\tl(G)-3}{4}\leq\frac{\widehat{\Delta}(G)-2}{4}\leq \bn(G)\le \cbc(G)\leq\frac{\widehat{\Delta}(G)}{2}+1\leq\frac{3 }{2}\tl(G)+1.$$
\end{theorem}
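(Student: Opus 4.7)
The plan is to observe that this theorem is essentially a chain-assembly of three already-established results, so no new technical work is needed; I will just splice the bounds together in the right order. The first three inequalities $\frac{\tl(G)-3}{4}\le\frac{\widehat{\Delta}(G)-2}{4}\le\bn(G)$ come directly from Theorem~\ref{th:bnc-delta-tl} (specifically from its second displayed line), which already links the tree-length, the maximum cluster-diameter, and the bottleneck constant through the layering-partition arguments in Lemmas~\ref{lm:BNC_ClusterDiam} and~\ref{lm:ClusterDiam_bnc} together with Proposition~\ref{prop:dorisb}.

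Next, the middle step $\bn(G)\le\cbc(G)$ is exactly the conclusion of Lemma~\ref{lm:bnc-cbc}, so I would just cite it. This is the step that justifies inserting $\cbc(G)$ into the otherwise familiar chain and is the conceptual content of the theorem, because it shows that the new cycle-bridging parameter is at least as large as the bottleneck constant.

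For the upper bound $\cbc(G)\le\frac{\widehat{\Delta}(G)}{2}+1$, I would invoke Lemma~\ref{lm:Delta-cbc}, which already gives the sharper $\cbc(G)\le\lceil\tfrac{\widehat{\Delta}(G)+1}{2}\rceil$ and then relax the ceiling. Finally, I would close the chain by applying the right-hand half of Proposition~\ref{prop:dorisb}, namely $\widehat{\Delta}(G)\le 3\tl(G)$, to obtain $\tfrac{\widehat{\Delta}(G)}{2}+1\le\tfrac{3}{2}\tl(G)+1$.

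There is no genuine obstacle here: every inequality in the displayed chain has already been proved, so the whole proof amounts to writing "By Theorem~\ref{th:bnc-delta-tl}, Lemma~\ref{lm:bnc-cbc}, Lemma~\ref{lm:Delta-cbc}, and Proposition~\ref{prop:dorisb}, the stated chain of inequalities holds." If anything, the only thing to be careful about is to make sure the transition $\tfrac{\widehat{\Delta}(G)}{2}+1\le\tfrac{3}{2}\tl(G)+1$ uses Proposition~\ref{prop:dorisb} (and not a rescaled version), so that the additive constant $+1$ is preserved on both sides and the bound is tight under the lemmas cited.
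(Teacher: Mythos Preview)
Your proposal is correct and essentially identical to the paper's own proof, which simply states that the theorem follows by combining Lemma~\ref{lm:bnc-cbc}, Lemma~\ref{lm:Delta-cbc}, and Theorem~\ref{th:bnc-delta-tl}. The only cosmetic difference is that you cite Proposition~\ref{prop:dorisb} separately for the final step $\tfrac{\widehat{\Delta}(G)}{2}+1\le\tfrac{3}{2}\tl(G)+1$, whereas the paper absorbs this into Theorem~\ref{th:bnc-delta-tl} (whose second displayed line already contains $\tfrac{\widehat{\Delta}(G)}{2}\le\tfrac{3}{2}\tl(G)$).
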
 

\begin{corollary} \label{cor:ineq-tl-cbc}
	For every graph $G$,   $\frac{2}{3}(\cbc(G)-1)\leq \tl(G)\leq 4\cdot \cbc(G)+3$. 	
\end{corollary}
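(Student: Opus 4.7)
The plan is to derive both inequalities directly from Theorem \ref{th:cbc-tl-delta} by elementary algebraic rearrangement; no new combinatorial argument is needed since the heavy lifting was already done in Lemma \ref{lm:bnc-cbc} and Lemma \ref{lm:Delta-cbc}, and then combined with the bounds relating $\bn(G)$, $\widehat{\Delta}(G)$ and $\tl(G)$ from Theorem \ref{th:bnc-delta-tl}.

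First I would extract the upper bound on $\tl(G)$. Theorem \ref{th:cbc-tl-delta} gives $\frac{\tl(G)-3}{4}\le \cbc(G)$, which after multiplying both sides by $4$ and transposing the constant yields $\tl(G)\le 4\cdot\cbc(G)+3$. Next I would extract the lower bound: the chain in the same theorem ends with $\cbc(G)\le \frac{3}{2}\tl(G)+1$, so subtracting $1$ and multiplying by $\frac{2}{3}$ gives $\frac{2}{3}(\cbc(G)-1)\le \tl(G)$. Combining these two inequalities delivers exactly the desired statement
\[
\tfrac{2}{3}(\cbc(G)-1)\le \tl(G)\le 4\cdot\cbc(G)+3.
\]

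There is essentially no obstacle here; the corollary is a purely arithmetic restatement of the sandwich inequalities proved in Theorem \ref{th:cbc-tl-delta}. The only thing worth flagging in the write-up is that each half of the chain in Theorem \ref{th:cbc-tl-delta} is used exactly once: the leftmost comparison $\frac{\tl(G)-3}{4}\le\cbc(G)$ supplies the upper bound on $\tl(G)$, and the rightmost comparison $\cbc(G)\le\frac{3}{2}\tl(G)+1$ supplies the lower bound. So the proof can reasonably be condensed into a one-line ``follows immediately from Theorem~\ref{th:cbc-tl-delta}'' remark, with the two rearrangements written out for the reader's convenience.
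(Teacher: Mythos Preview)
Your proposal is correct and matches the paper's approach exactly: the corollary is stated without its own proof precisely because it follows by the two algebraic rearrangements you describe from the outermost inequalities of Theorem~\ref{th:cbc-tl-delta}.
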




Note that, although $\tl(G)=\cbc(G)=1$ for every chordal graph $G$, generally, these two graph parameters are not equal. 
Consider a cycle $C_{12k}$ on $12k$ vertices. We have $\tl(C_{12k})=12k/3=4k$ and $\cbc(C_{12k})=12k/4+1=3k+1.$  Furthermore, since $\bn(C_4)=\bn(C_5)=\bn(C_6)=\bn(C_7)=1$ and $\cbc(C_4)=\cbc(C_5)=\cbc(C_6)=\cbc(C_7)=2$, generally, $\bn(G)$ and $\cbc(G)$  are not equal. 
However, it turns out 
that, indeed, they are only one unit apart. 

\begin{lemma}   \label{lm:bnc-cbc-1}
	For every graph $G$,  $\bn(G)\le\cbc(G)\le \bn(G)+1$. 
\end{lemma}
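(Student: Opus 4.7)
The left inequality $\bn(G)\le\cbc(G)$ is already Lemma~\ref{lm:bnc-cbc}, so the plan is to establish $\cbc(G)\le\bn(G)+1$. Setting $r:=\bn(G)$, I would verify that the $\cbc$ defining property is satisfied at level $k=r+1$, which immediately gives $\cbc(G)\le r+1$.

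First I would fix an arbitrary simple cycle $C$ of $G$, a vertex $v\in C$, and the two $(r+1)$-neighbors $x,y$ of $v$ in $C$, assuming $d_G(x,y)=2(r+1)$ (so in particular $|C|\ge 4(r+1)$). The subpath $P_v(x,y)\subseteq C$ from $x$ to $y$ through $v$ has length $2(r+1)=d_G(x,y)$, hence it is a shortest $x$-$y$ path in $G$, of even length, whose middle vertex is $v$. Let $Q$ denote the complementary $x$-$y$ path in $C$, i.e.\ $Q = C \setminus (P_v(x,y) \setminus \{x,y\})$.

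Now I would invoke the bottleneck constant definition directly on this shortest path: since $P_v(x,y)$ is a shortest $x$-$y$ path of even length with middle $v$, every $x$-$y$ path contains a vertex within distance $r$ from $v$. Applying this to $Q$ produces a vertex $z\in V(Q)$ with $d_G(v,z)\le r$. Since $d_G(v,x)=d_G(v,y)=r+1>r$, the vertex $z$ is neither $x$ nor $y$, so $z$ lies strictly in $C\setminus P_v(x,y)$. Consequently $d_C(v,z)\ge (r+1)+1=r+2>r+1$, while $d_G(v,z)\le r<r+1$. This is exactly the witness required by the definition of $\cbc$ at $k=r+1$, so $\cbc(G)\le r+1=\bn(G)+1$.

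The proof is almost entirely a careful bookkeeping exercise, so the only real subtlety is the step that guarantees $z\notin\{x,y\}$: without this, $z$ could coincide with $x$ or $y$ and would fail the strict inequality $d_C(v,z)>r+1$. This is where the even-length choice $d_G(x,y)=2(r+1)$ (forcing $d_G(v,x)=d_G(v,y)=r+1>r$) is essential, and it is what dictates why $\cbc$ can exceed $\bn$ by one but never by more. The combination with Lemma~\ref{lm:bnc-cbc} then yields the two-sided bound $\bn(G)\le \cbc(G)\le \bn(G)+1$.
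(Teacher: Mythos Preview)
Your proof is correct and follows essentially the same route as the paper: both set $k=\bn(G)+1$, observe that $P_v(x,y)$ is a shortest $x$--$y$ path of even length with middle vertex $v$, and apply the bottleneck definition to the complementary arc of $C$ to produce the required $z$. Your extra sentence justifying $z\notin\{x,y\}$ (via $d_G(v,x)=d_G(v,y)=r+1>r$) makes explicit a small step the paper leaves implicit, but otherwise the arguments coincide.
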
 

\begin{proof} By Lemma   \ref{lm:bnc-cbc}, we need only to show $\cbc(G)\le \bn(G)+1$.  Let $k:=\bn(G)+1$,  $C$ be a simple cycle of $G$,  $v$ be an arbitrary vertex of $C$, and assume that $d_G(x,y)=2k$ holds for the two $k$-neighbors $x,y$ of $v$ in $C$. By the definition of $\bn(G)$, vertex $v$ has at distance at most $\bn(G)=k-1$ a vertex in every path of $G$ connecting $x$ and $y$. Necessarily, there must exist a vertex $z$ in $C\setminus P_v(x,y)$ such that $d_G(v,z)\le k-1<k$, where $P_v(x,y)$ is a part of $C$ between $x$ and $y$ containing $v$.  
Consequently, $\cbc(G)\le k=\bn(G)+1$.  \qed
\end{proof}
 
We can now better relate $\cbc(G)$ to $\adt(G)$ and $\mc(G)$. By Corollary  \ref{cor:ineq-bnc-mcw}, Theorem  \ref{th:adt-tl-delta} and Lemma    \ref{lm:bnc-cbc-1}, we have the following inequalities. 

\begin{corollary} \label{cor:ineq-adt-mcw--cbc}
	For every graph $G$,   $\frac{\cbc(G)-2}{3}\leq \frac{\bn(G)-1}{3}\leq\adt(G)\leq 4\cdot \bn(G)+2\leq 4\cdot \cbc(G)+2$ and $\frac{\cbc(G)-1}{3}\leq \mc(G)\leq 4\cdot \cbc(G)+2$. 	
\end{corollary}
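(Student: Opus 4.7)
The plan is to note that this corollary is purely a bookkeeping exercise: each of the six inequalities claimed is obtained by chaining together at most two already-established bounds, and no new combinatorial idea is needed. I would therefore lay out the chains explicitly in the order in which the terms appear in the statement, invoking only Lemma \ref{lm:bnc-cbc-1}, Theorem \ref{th:adt-tl-delta}, and Corollary \ref{cor:ineq-bnc-mcw}.

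For the first chain $\frac{\cbc(G)-2}{3}\le\frac{\bn(G)-1}{3}\le\adt(G)\le 4\cdot\bn(G)+2\le 4\cdot\cbc(G)+2$, I would proceed as follows. The leftmost inequality is a direct consequence of $\cbc(G)\le\bn(G)+1$ from Lemma \ref{lm:bnc-cbc-1}: subtract $2$ from both sides and divide by $3$. The next inequality, $\frac{\bn(G)-1}{3}\le\adt(G)$, is just a rearrangement of $\bn(G)\le 3\cdot\adt(G)+1$, which appears in Theorem \ref{th:adt-tl-delta}. The third inequality, $\adt(G)\le 4\cdot\bn(G)+2$, is also drawn directly from Theorem \ref{th:adt-tl-delta} (the line $\adt(G)\le\Delta_s(G)\le 4\cdot\bn(G)+2$). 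The rightmost inequality uses the other half of Lemma \ref{lm:bnc-cbc-1}, namely $\bn(G)\le\cbc(G)$, multiplied by $4$ and shifted by $2$.

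For the second chain $\frac{\cbc(G)-1}{3}\le\mc(G)\le 4\cdot\cbc(G)+2$, the plan is to chain the bounds through $\bn(G)$. For the lower bound, combine $\cbc(G)\le\bn(G)+1$ (Lemma \ref{lm:bnc-cbc-1}) with $\bn(G)\le 3\cdot\mc(G)$ (Corollary \ref{cor:ineq-bnc-mcw}) to get $\cbc(G)\le 3\cdot\mc(G)+1$, and rearrange. For the upper bound, combine $\mc(G)\le 4\cdot\bn(G)+2$ (Corollary \ref{cor:ineq-bnc-mcw}) with $\bn(G)\le\cbc(G)$ (Lemma \ref{lm:bnc-cbc-1}).

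There is no real obstacle here: since every ingredient is already in hand, the only thing to be careful about is getting the arithmetic right when rearranging strict inequalities (making sure the additive constants match and that the transitivity chain is stated in the correct direction). I would present the proof as a short three-line derivation for each of the two chains, citing the supporting statement at each step, and then conclude.
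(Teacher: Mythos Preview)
Your proposal is correct and follows exactly the approach the paper uses: the paper likewise derives the corollary directly from Corollary~\ref{cor:ineq-bnc-mcw}, Theorem~\ref{th:adt-tl-delta}, and Lemma~\ref{lm:bnc-cbc-1} without any additional argument. Your explicit unpacking of the six inequalities matches the intended derivation line by line.
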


\commentout{
{\color{red}/* see in Diestel et al. Conjecture 1.6. and Th.1.1. (which is proved by Lm.3.6 and Lm.3.7).    \\
- my Lm \ref{lm:bnc-cbc} is like their conjecture: \\
--- if $\tl$ is large then there is a bad cycle [if no bad cycle then $\tl$ is bounded] \\
- their Th.1.1. (is like my Lm \ref{lm:Delta-cbc}?): \\
--- if there is a bad cycle then $\tl$ is unbounded [if $\tl$ is bounded then no bad cycle] */ \\

if $\tl\le 4/3\cdot \cbc$ is true like for cycles, this gives a chance to get \~ 2-appr for computing $\tl$ ??? }\\
}

We can define one more condition on cycles which also  turns out to be coarsely equivalent to tree-length. Recall that a simple cycle $C$ of $G$ is called {\em geodesic} if for every $x,y\in C$, $d_G(x,y)=d_C(x,y)$. Let us call a simple cycle $C$ of $G$ {\em $\mu$-locally geodesic} if for every two vertices $x,y\in C$, $d_G(x,y)=d_C(x,y)=\ell$ implies $\ell\le \mu$. When $\mu\ge |C|/2$, clearly, every   $\mu$-locally geodesic cycle $C$ is geodesic. If a simple cycle $C$ is not $\mu$-locally geodesic in $G$, then there must exist two vertices in $C$ such that $d_G(x,y)=d_C(x,y)=\mu+1$. We call that side of $C$ between $x$ and $y$ which realizes $d_C(x,y)$ a  {\em side of non-$\mu$-locality} (note that $x$ and $y$ both belong to that side). We say that vertices $v,z$ of $C$ form a {\em $k$-bridge} in $C$ if $d_G(v,z)\le k<d_C(v,z)$. 

Denote by $\bgc(G)$ (call it the {\em ''bridging non-locally geodesic cycles`` constant} of $G$) the minimum $\mu$ ($\mu\ge 1$) such that for every cycle $C$ of $G$ 
that is not $\mu$-locally geodesic, from every side of non-$\mu$-locality there is a 
$\lfloor \frac{\mu+1}{2}\rfloor$-bridge to other side of $C$. 

\begin{lemma}   \label{lm:bgc-cbc}
	For every graph $G$,  $\bgc(G)\le 2\cdot\cbc(G)-1$. 
\end{lemma}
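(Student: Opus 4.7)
The plan is to set $k := \cbc(G)$, let $\mu := 2k-1$, and verify directly that every simple cycle $C$ of $G$ which is not $\mu$-locally geodesic admits a $\lfloor(\mu+1)/2\rfloor = k$ bridge from every side of non-$\mu$-locality to the other side. This establishes $\bgc(G) \le \mu = 2\cdot\cbc(G)-1$.

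First, I would fix a cycle $C$ of $G$ that is not $\mu$-locally geodesic and an arbitrary side of non-$\mu$-locality. By the definition of this notion, there exist two vertices $x,y$ on that side with $d_G(x,y) = d_C(x,y) = \mu+1 = 2k$, and the side in question is the subpath $P$ of $C$ between $x$ and $y$ of length $2k$. Since $d_C(x,y) = 2k$ is the shorter of the two distances along $C$, the opposite side of $C$ has length at least $2k$, so $|C| \ge 4k$. In particular, the midpoint $v$ of $P$ is well-defined, and $d_C(v,x) = d_C(v,y) = k$, so that $x$ and $y$ are precisely the two $k$-neighbors of $v$ in $C$.

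Next, I would apply the hypothesis on $\cbc(G)$. Since $d_G(x,y) = 2k$ and $|C| \ge 4k$, the defining condition of $\cbc(G)=k$ produces a vertex $z \in C$ with $d_G(v,z) \le k < d_C(v,z)$. The key observation is that $d_C(v,z) > k$ forces $z$ to lie on the side of $C$ opposite to $P$: indeed, as $v$ is the midpoint of $P$ and $|C| \ge 4k$, every vertex of $P$ is within cyclic distance $k$ of $v$, so $z \notin P$. Hence $(v,z)$ is a $k$-bridge from the side of non-$\mu$-locality $P$ to the opposite side of $C$, as required.

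The argument is essentially direct once the parameters are aligned: the substantive work is a bookkeeping check that the midpoint $v$ exists (which requires $\mu+1$ to be even, handled by the choice $\mu = 2k-1$) and that $|C| \ge 4k$ (which is automatic from $d_C(x,y)=2k$ being the shorter side distance). The only conceptual subtlety is matching the $\cbc$ definition's quantification --- which fixes $v$ first and then produces the $k$-neighbors --- to the $\bgc$ setting, where the pair $(x,y)$ is given first; this is the reason for picking $v$ as the midpoint of the prescribed side of non-$\mu$-locality. I do not anticipate this being an obstacle, just a matter of careful ordering. No auxiliary result from the paper is needed beyond the definitions of $\cbc$ and $\bgc$ themselves.
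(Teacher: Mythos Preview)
Your proof is correct and follows essentially the same approach as the paper: set $k=\cbc(G)$, $\mu=2k-1$, take the midpoint $v$ of a side of non-$\mu$-locality so that its $k$-neighbors are the given pair $x,y$ with $d_G(x,y)=2k$, and invoke the definition of $\cbc(G)$ to produce the required $k$-bridge. Your write-up is somewhat more explicit than the paper's (in particular, you spell out that $|C|\ge 4k$ and why $d_C(v,z)>k$ forces $z$ off $P$), but the argument is the same.
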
 

\begin{proof}  Let $\cbc(G)=k$ and $C$ be a cycle of $G$ that is not $(2k-1)$-locally geodesic. Let $x,y\in C$  such that $d_G(x,y)=d_C(x,y)=2k$. Consider vertex $v$ of $C$ between $x$ and $y$ with  $d_C(v,x)=k=d_C(v,y)$. The two $k$-neighbors of $v$ in $C$ are at distance $2k$ from each other in $G$. By the definition of $\cbc(G)$, there is a vertex $z\in C$ such that $d_G(v,z)\le k<d_C(v,z)$. Consequently, $\bgc(G)\le 2\cdot\cbc(G)-1$. 
\qed
\end{proof}

Now, using a layering partition of a graph $G$, we upperbound $\Delta_s(G)$ by a linear function of $\bgc(G)$.

\begin{lemma} \label{lm:ClusterDiam_bgc}
	For every graph $G$  and every vertex $s$ of $G$, $\Delta_s(G)\leq 4\cdot \bgc(G)+6$. In particular, $\widehat{\Delta}(G)\leq 4\cdot
 \bgc(G)+6$ for every graph $G$.
\end{lemma}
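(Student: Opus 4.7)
The plan is to mimic the proof of Lemma~\ref{lm:ClusterDiam_bnc}, replacing the bottleneck property by the bridging property of non--locally geodesic cycles. Set $\mu := \bgc(G)$ and $\beta := \lfloor (\mu+1)/2\rfloor$, and fix a vertex $s$. Let $x,y$ be two vertices lying in the same cluster of $\mathcal{LP}(G,s)$ that realize $d_G(x,y)=\Delta_s(G)$, put $k:=d_G(s,x)=d_G(s,y)$, and choose a path $Q$ in $G[V\setminus D_{k-1}(s)]$ connecting $x$ and $y$. Pick shortest paths $P(s,x)$ and $P(s,y)$, and let $s'$ be the furthest-from-$s$ vertex in $P(s,x)\cap P(s,y)$, so $k':=d_G(s',x)=d_G(s',y)$. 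Since $d_G(x,y)\le 2k'$, we may assume $k'\geq 2\mu+4$ (otherwise $\Delta_s(G)\le 4\mu+4$ and we are done with room to spare). As in the proof of Lemma~\ref{lm:ClusterDiam_bnc}, the subpaths $P(s',x)$ and $P(s',y)$ meet $Q$ only at $x$ and $y$ respectively (all of their interior vertices lie in $D_{k-1}(s)$), so $C':=P(s',x)\cup Q\cup P(y,s')$ is a simple cycle of length $2k'+|Q|$.

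Next, I want to produce an explicit side of non-$\mu$-locality of $C'$ from which the bridging property of $\bgc(G)$ can be applied. Pick the vertex $u\in P(s',x)$ with $d_G(x,u)=2\mu+2$; this exists because $k'\ge 2\mu+4$. The pair $(u,x)$ gives $d_G(u,x)=2\mu+2=d_{C'}(u,x)>\mu$ (the alternative way around $C'$ has length $2k'+|Q|-(2\mu+2)\ge 2\mu+2$ under our assumption on $k'$), so $C'$ is not $\mu$-locally geodesic and the side of non-$\mu$-locality is exactly $P(u,x)$. By the definition of $\bgc(G)$, there must exist a $\beta$-bridge $(a,z)$ with $a\in P(u,x)$, $z$ on the other side, $d_G(a,z)\le \beta$ and $d_{C'}(a,z)>\beta$. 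I then intend to analyze the three possible locations of $z$:
\emph{(A)}~if $z\in P(s',u)$, then $a$ and $z$ both lie on the shortest path $P(s',x)$, so $d_G(a,z)=d_{C'}(a,z)$, contradicting $d_{C'}(a,z)>\beta\ge d_G(a,z)$;
\emph{(B)}~if $z\in Q$, then $d_G(s,z)\ge k$ and $d_G(s,a)=(k-k')+d_G(s',a)$, forcing $d_G(a,z)\ge k'-d_G(s',a)$, which combined with $d_G(a,z)\le\beta$ puts $a$ within $\beta$ of $x$ on $P(s',x)$;
\emph{(C)}~if $z\in P(y,s')$, then two straightforward triangle-inequality arguments using $s'$ and the fact that $z$ lies on the shortest $s'$--$y$ path give $d_G(x,y)\le d_G(x,a)+d_G(a,z)+d_G(z,y)\le (k'-d_G(s',a))+\beta+(k'-d_G(s',z))$, which together with the constraints on the positions of $a,z$ yields $d_G(x,y)\le 4\mu+6$.

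The main obstacle is case (B), where the bridge lands on $Q$ rather than on $P(y,s')$. Here the bridge forces $a$ to be within $\beta$ of $x$, so $d_G(x,z)\le 2\beta\le\mu+1$, but a priori $z$ could be close to $x$ along $Q$ and give no direct control on $d_G(x,y)$. To rule this case out, the plan is to apply the definition of $\bgc$ a second time, either to a symmetric witness pair $(u',y)$ on the $P(s',y)$-side of $C'$ (with $d_G(u',y)=2\mu+2$), or to the smaller simple cycle obtained by combining the bridging shortcut from $a$ to $z$ with the subpath of $P(s',x)$ from $x$ to $a$ and the subpath of $Q$ from $z$ to $x$; in either case the new bridge cannot again be of type (B) near the same endpoint without violating either the depth constraint $d_G(s,\cdot)\ge k$ on $Q$ or the equality $d_G=d_{C'}$ on subpaths of shortest paths, so one is forced into a configuration of type (C) and the bound $\Delta_s(G)\le 4\bgc(G)+6$ follows via the same triangle estimate.
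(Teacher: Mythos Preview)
Your overall framework matches the paper's, but the handling of case~(B) is a real gap, and the fix you sketch does not close it. First a definitional point: by the paper's definition a \emph{side of non-$\mu$-locality} has length exactly $\mu+1$, so you cannot take $P(u,x)$ of length $2\mu+2$ as the side; you must choose a specific length-$(\mu+1)$ subpath of $P(s',x)$ and apply the $\bgc$ property to that. The crucial question is \emph{which} subpath to take.

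The paper's choice (translated to your notation $\mu=\bgc(G)$, $\beta=\lfloor(\mu+1)/2\rfloor$) is the subpath $P(u,w)$ with $d_G(x,w)=\beta+1$ and $d_G(x,u)=\mu+\beta+2$. The point of the offset $\beta+1$ is that every vertex $v$ of this side satisfies $d_G(s,v)\le d_G(s,w)=k-\beta-1$, so a $\beta$-bridge from $v$ lands at depth at most $k-1$ and therefore cannot meet $Q$. This kills case~(B) in one line, with no second application of the bridging property needed. Your placement lets the side reach $x$ itself, so a bridge from near $x$ can legitimately land in $Q$; and your proposed remedy of applying $\bgc$ again on the $y$-side does not help: if the first bridge lands in $Q$ near $x$ and the second lands in $Q$ near $y$, you only learn that each endpoint is within $2\beta$ of some vertex of $Q$, which gives no bound on $d_G(x,y)$ since $Q$ may be arbitrarily long. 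The ``smaller cycle'' alternative is not worked out either (simplicity, position of the new side, termination).

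Finally, even your case~(C) arithmetic overshoots. With $a\in P(u,x)$ you only get $d_G(x,a)\le 2\mu+2$, hence $d_G(x,y)\le (2\mu+2)+\beta+\bigl(\beta+(2\mu+2)\bigr)=4\mu+4+2\beta\le 5\mu+5$, not $4\mu+6$. With the paper's offset side one obtains $d_G(x,v)\le \mu+\beta+2$, $d_G(v,z)\le\beta$, and (via $d_G(y,z)=k-d_G(s,z)$ together with $k\le d_G(s,z)+d_G(z,v)+d_G(v,x)$) $d_G(z,y)\le \mu+2\beta+2$, giving $d_G(x,y)\le 2\mu+4\beta+4\le 4\mu+6$, which is exactly the claimed bound.
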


\begin{proof}
Let $s$ be an arbitrary vertex of $G$ and $\mathcal{LP}(G,s)$ be the layering partition of $G$ starting at $s$. Consider vertices $x$ and $y$ from a cluster of $\mathcal{LP}(G,s)$ with $d_G(x,y)=\Delta_s(G)$, and let $k:=d_G(s,x)=d_G(s,y)$ and $\mu:=\bgc(G)+1$. Choose also a path $Q$ connecting $x$ and $y$ outside the disk $D_{k-1}(s)$.

Consider arbitrary  shortest paths $P(s,x)$ and $P(s,y)$  of $G$ connecting $s$ with $x$ and $y$, respectively. Let $s'$ be a vertex from  $P(s,x)\cap P(s,y)$ furthest from $s$. We may assume that $k':=d_G(x,s')=d_G(y,s')=k-d_G(s,s')$ is greater than $\mu+\lfloor \frac{\mu}{2}\rfloor$ 
since, otherwise,  $d_G(x,y)\le d_G(x,s')+d_G(s',y)=2k'\leq 3\mu$, and we are done. 

Pick now vertices $u,w$ in $P(s,x)$ such that  
$d_G(x,u)=\mu+\lfloor \frac{\mu}{2}\rfloor
+1\leq d_G(x,s')$, $d_G(u,w)=\mu$ and $d_G(x,w)=\lfloor \frac{\mu}{2}\rfloor
+1$. Let $P(u,w)$ be a subpath of $P(s,x)$ between $u$ and $w$. 
%
A simple cycle $C$ of $G$ formed by $Q$ and subpaths $P(s',x)$ and $P(s',y)$ of $P(s,x)$ and $P(s,y)$, respectively, is not $(\mu-1)$-locally geodesic. Since $\mu-1=\bgc(G)$, for a side $P(u,w)$ of $C$ of non-$(\mu-1)$-locality, we must have a $\lfloor\mu/2\rfloor$-bridge from a vertex $v\in P(u,w)$ to a vertex $z\in C\setminus P(u,w)$. As $d_G(v,z)\le \lfloor\mu/2\rfloor<d_C(v,z)$, vertex $z$ cannot belong to a shortest path $P(s',x)$ (which is a part of $C$ and contains also $v$). If $z$ belongs to  $Q$, then $d_G(s,Q)=k=d_G(s,x)=d_G(s,v)+d_G(v,z)\le d_G(s,w)+\lfloor\mu/2\rfloor<d_G(s,w)+d_G(w,x)=d_G(s,x)=k$, giving a contradiction. Thus, $z$ cannot be in $Q$ either. Consequently, $z\in P(s',y)$.

Since $d_G(y,z)=k-d_G(s,z)$ and $k=d_G(x,s)\le d_G(s,z)+d_G(z,v)+d_G(v,x)$, necessarily, $d_G(y,z)\le d_G(z,v)+d_G(v,x)\le d(z,v)+d_G(u,x)\le \lfloor\mu/2\rfloor+\mu+\lfloor\mu/2\rfloor+1\le 2\mu+1$. Consequently,  $d_G(x,y)\le d_G(x,v)+d_G(v,z)+d_G(z,y)\leq d_G(x,u)+d_G(v,z)+d_G(z,y)\le \mu+\lfloor\mu/2\rfloor+1+\lfloor\mu/2\rfloor+ 2\mu+1=4\mu+2$. That is,  $\Delta_s(G)\leq 4 (\bgc(G)+1)+2= 4\cdot \bgc(G)+6$. \qed
\end{proof}

Summarizing (using Proposition \ref{prop:dorisb}, Lemma  \ref{lm:Delta-cbc}, Lemma   \ref{lm:bgc-cbc}, and Lemma \ref{lm:ClusterDiam_bgc}), we conclude. 
\begin{theorem} \label{th:bgc-tl-delta}
For every graph $G$, the following inequalities hold:  
 $$\frac{\tl(G)-7}{4}\leq\frac{\widehat{\Delta}(G)-6}{4}\leq \bgc(G)\le 2\cbc(G)-1\leq\widehat{\Delta}(G)+1\leq 3\cdot \tl(G)+1.$$
\end{theorem}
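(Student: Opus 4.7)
The plan is to assemble the five-term inequality by simply chaining the four cited results in the order in which the terms appear, so the proof will be a short bookkeeping argument rather than a new construction.

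First, I would handle the middle inequality $\bgc(G)\le 2\cbc(G)-1$, which is exactly Lemma \ref{lm:bgc-cbc} and requires nothing further. Next, for the lower bound $\widehat{\Delta}(G)-6 \le 4\cdot\bgc(G)$, I would invoke Lemma \ref{lm:ClusterDiam_bgc}, which gives $\widehat{\Delta}(G)\leq 4\cdot \bgc(G)+6$, and divide by $4$ to get the stated form $\tfrac{\widehat{\Delta}(G)-6}{4}\leq \bgc(G)$. For the upper bound $2\cbc(G)-1\leq \widehat{\Delta}(G)+1$, I would rewrite Lemma \ref{lm:Delta-cbc}, which states $\cbc(G)\le \tfrac{\widehat{\Delta}(G)}{2}+1$, as $2\cbc(G)\le \widehat{\Delta}(G)+2$, and then subtract $1$ from both sides.

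The two extreme inequalities involve $\tl(G)$ and come straight from Proposition \ref{prop:dorisb}. For the left end, that proposition gives $\tl(G)\le \Delta(G)+1\le \widehat{\Delta}(G)+1$, so $\tl(G)-1\le \widehat{\Delta}(G)$; subtracting $6$ from both sides and dividing by $4$ yields $\tfrac{\tl(G)-7}{4}\le \tfrac{\widehat{\Delta}(G)-6}{4}$. For the right end, the same proposition in the form $\widehat{\Delta}(G)\le 3\cdot \tl(G)$ gives $\widehat{\Delta}(G)+1\le 3\cdot \tl(G)+1$ after adding $1$.

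Since every individual step is already packaged as a lemma or proposition in the paper, I do not expect any real obstacle; the only thing to be careful about is keeping the constants aligned across the divisions and subtractions so that the four displayed fractions and offsets match the statement exactly. Concatenating the five established links in the order lower-bound-via-Prop~\ref{prop:dorisb}, Lemma~\ref{lm:ClusterDiam_bgc}, Lemma~\ref{lm:bgc-cbc}, Lemma~\ref{lm:Delta-cbc}, upper-bound-via-Prop~\ref{prop:dorisb} yields the full chain of inequalities in Theorem~\ref{th:bgc-tl-delta}.
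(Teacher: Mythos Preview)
Your proposal is correct and follows exactly the same approach as the paper, which simply states that the theorem is obtained by combining Proposition~\ref{prop:dorisb}, Lemma~\ref{lm:Delta-cbc}, Lemma~\ref{lm:bgc-cbc}, and Lemma~\ref{lm:ClusterDiam_bgc}. Your careful tracking of the constants through the algebraic manipulations matches the intended argument precisely.
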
 

\begin{corollary} \label{cor:ineq-tl-bgc}
	For every graph $G$,   $\frac{\bgc(G)-1}{3}\leq \tl(G)\leq 4\cdot \bgc(G)+7$. 	
\end{corollary}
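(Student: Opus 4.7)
\medskip

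\noindent\textbf{Proof proposal for Corollary \ref{cor:ineq-tl-bgc}.} The plan is to obtain both inequalities as direct algebraic consequences of Theorem~\ref{th:bgc-tl-delta}, so no new combinatorial work is needed at this point; the heavy lifting has already been done in Lemma~\ref{lm:Delta-cbc}, Lemma~\ref{lm:bgc-cbc}, Lemma~\ref{lm:ClusterDiam_bgc}, and Proposition~\ref{prop:dorisb}.

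First I would extract the left inequality of the corollary. From the chain in Theorem~\ref{th:bgc-tl-delta} one has $\bgc(G) \leq 3\cdot\tl(G)+1$, which rearranges to $\frac{\bgc(G)-1}{3} \leq \tl(G)$. Conceptually, this side captures the easy direction: a tree-decomposition of small length forces cycles to have small cluster-diameter, hence bridged non-locally-geodesic sides, via $\widehat{\Delta}(G) \leq 3\cdot\tl(G)$ from Proposition~\ref{prop:dorisb} combined with Lemma~\ref{lm:Delta-cbc} and Lemma~\ref{lm:bgc-cbc}.

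Next I would extract the right inequality of the corollary. From the chain in Theorem~\ref{th:bgc-tl-delta} one has $\frac{\tl(G)-7}{4} \leq \bgc(G)$, which rearranges to $\tl(G) \leq 4\cdot\bgc(G)+7$. This is the hard direction: small $\bgc(G)$ forces $\widehat{\Delta}(G)$ to be small (Lemma~\ref{lm:ClusterDiam_bgc}, which is the main obstacle and has already been established via the layering partition technique), and then Proposition~\ref{prop:dorisb} translates this into a bound on $\tl(G)$.

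Since every step is a one-line rearrangement of already-proved inequalities, there is no anticipated obstacle; the corollary is merely the ``$\bgc$ versus $\tl$'' fragment of the longer chain in Theorem~\ref{th:bgc-tl-delta}, isolated for the reader's convenience and to make explicit that $\bgc(G)$ and $\tl(G)$ are coarsely equivalent graph parameters.
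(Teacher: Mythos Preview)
Your proposal is correct and matches the paper's approach: the corollary is stated immediately after Theorem~\ref{th:bgc-tl-delta} with no separate proof, being an obvious algebraic rearrangement of the endpoints of that chain of inequalities. Your explanation of which ingredients (Proposition~\ref{prop:dorisb}, Lemma~\ref{lm:Delta-cbc}, Lemma~\ref{lm:bgc-cbc}, Lemma~\ref{lm:ClusterDiam_bgc}) feed into each direction is accurate.
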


From Lemma \ref{lm:Delta-cbc} and Lemma  \ref{lm:ClusterDiam_bgc}, it follows $\cbc(G)\le 2\cdot \bgc(G)+4$. We can show a better bound directly. 

\begin{lemma} \label{lm:cbc_bgc}
	For every graph $G$, $\cbc(G)\leq \bgc(G)+1$. 
\end{lemma}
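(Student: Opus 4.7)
The plan is to set $\mu := \bgc(G)$ and $k := \mu + 1$, then verify the defining property of $\cbc(G) \le k$ directly: for every simple cycle $C$ and every vertex $v \in C$ whose two $k$-neighbours $x, y$ satisfy $d_G(x,y) = 2k$, I must exhibit $z \in C$ with $d_G(v,z) \le k < d_C(v,z)$. The key geometric fact to exploit is that, because the arc $P := P_v(x,y)$ of $C$ has length $2k = d_G(x,y)$, $P$ is itself a shortest $G$-path; thus, writing $P$ as $w_{-k}, w_{-k+1}, \ldots, w_0 = v, \ldots, w_k$, every pair $w_i, w_j \in P$ satisfies $d_G(w_i, w_j) = |i - j|$.

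My approach is by contradiction: suppose every $z \in C \setminus P$ has $d_G(v,z) > k$ (noting that $C \setminus P$ is exactly $\{z \in C : d_C(v,z) > k\}$). I would then extract from $P$ a subarc $S$ of $C$-length exactly $\mu + 1 = k$ placed as symmetrically around $v$ as parity permits, namely $S := [w_{-\lceil k/2 \rceil}, w_{\lfloor k/2 \rfloor}]$. Since $S \subset P$, its endpoints $a, b$ satisfy $d_G(a,b) = d_C(a,b) = k = \mu + 1$, certifying $S$ as a side of non-$\mu$-locality of $C$. Invoking the defining property of $\bgc(G) = \mu$ yields a $\lfloor (\mu+1)/2 \rfloor = \lfloor k/2 \rfloor$-bridge: vertices $p \in S$ and $q \in C \setminus S$ with $d_G(p,q) \le \lfloor k/2 \rfloor$ and $d_C(p,q) > \lfloor k/2 \rfloor$. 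The centred placement of $S$ guarantees $d_G(v,p) \le \lceil k/2 \rceil$, so the triangle inequality gives $d_G(v,q) \le \lceil k/2 \rceil + \lfloor k/2 \rfloor = k$. If $q \in C \setminus P$ this already contradicts the standing assumption; if instead $q \in P \setminus S$ then both $p, q \in P$, whence $d_G(p,q) = |i - j|$ while $d_C(p,q) \le |i - j|$, forcing $d_C(p,q) \le d_G(p,q) \le \lfloor k/2 \rfloor$ and contradicting the bridge's strict inequality.

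The main obstacle lies in the centred placement of $S$: the triangle-inequality bound on $d_G(v,q)$ must come out to exactly $k$, which requires splitting $k = \lceil k/2 \rceil + \lfloor k/2 \rfloor$ and placing $v$ in $S$ as centrally as the parity of $k$ allows. The remaining worry, that the bridge's endpoint $q$ might land in the ``outer'' portion $P \setminus S$ of $P$ (which by itself would not yield a useful witness), is ruled out cleanly by the shortest-path nature of $P$, which forces $d_C \le d_G$ on $P \times P$ --- precisely the opposite of what any bridge requires. Thus the only viable location for $q$ is $C \setminus P$, closing the contradiction and establishing $\cbc(G) \le \bgc(G) + 1$.
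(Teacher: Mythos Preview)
Your argument is correct and follows essentially the same approach as the paper's: pick a subarc of $P_v(x,y)$ of length $\mu+1$ centred (up to parity) at $v$, invoke the defining property of $\bgc(G)=\mu$ to obtain a $\lfloor(\mu+1)/2\rfloor$-bridge from that subarc to the rest of $C$, use the triangle inequality through the bridge's inner endpoint to bound $d_G(v,q)\le k$, and rule out $q\in P_v(x,y)$ via the geodesicity of $P_v(x,y)$. Your asymmetric placement $S=[w_{-\lceil k/2\rceil},\,w_{\lfloor k/2\rfloor}]$ is in fact a bit cleaner than the paper's symmetric choice, since it always has length exactly $\mu+1$ (the paper's arc has length $\mu+2$ when $\mu$ is even and so is only nominally a side of non-$\mu$-locality).
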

\begin{proof} Let $k:=\lfloor\frac{\bgc(G)}{2}\rfloor+1+\lfloor\frac{\bgc(G)+1}{2}\rfloor$,  $C$ be a simple cycle of $G$,  $v$ be an arbitrary vertex of $C$, and assume that $d_G(x,y)=2k$ holds for the two $k$-neighbors $x,y$ of $v$ in $C$. We have $d_G(x,y)=d_C(x,y)=2k$. Consider the shortest path $P(x,y)\subset C$ between $x$ and $y$, and let $x',y'$ be two distinct vertices of $P(x,y)$ at distance $\lfloor\frac{\bgc(G)}{2}\rfloor+1$ from $v$. Let $P(x',y')$ be a subpath of $P(x,y)$ between $x'$ and $y'$. Its length $d_G(x',y')$ is $2(\lfloor\frac{\bgc(G)}{2}\rfloor+1)\ge\bgc(G)+1$. 
By the definition of $\bgc(G)$, from non-$\bgc(G)$-locality  side $P(x',y')$ of $C$ there must exist a $\lfloor\frac{\bgc(G)+1}{2}\rfloor$-bridge to other side of $C$, i.e., vertices $v'\in P(x',y')$ and $z\in C\setminus P(x',y')$ such that $d_G(v',z)\le \lfloor\frac{\bgc(G)+1}{2}\rfloor<d_C(v',z)$. The inequality  $d_G(v',z)\le \lfloor\frac{\bgc(G)+1}{2}\rfloor$ guarantees $d_G(v,z)\le d_G(v,v')+d_G(v',z)\le \lfloor\frac{\bgc(G)}{2}\rfloor+1+\lfloor\frac{\bgc(G)+1}{2}\rfloor=k$. The inequality  $d_G(v',z)<d_C(v',z)$  guarantees that $z$ cannot be in $P(x,y)$ (recall that $P(x,y)\subset C$ is a shortest path of $G$ and, hence, for every $u,w\in P(x,y)$, $d_G(u,w)=d_C(u,w)$). 

Consequently, $\cbc(G)\le k=\bgc(G)+1$.  \qed
\end{proof}

 
Combining Lemma  \ref{lm:bnc-cbc-1}, Lemma  \ref{lm:bgc-cbc}, Lemma  \ref{lm:cbc_bgc}  and Corollary  \ref{cor:ineq-adt-mcw--cbc}, we get the following inequalities. 

\begin{corollary} \label{cor:ineq-adt-mcw-cbc--bgc}
	For every graph $G$,  $$\bn(G)\le \cbc(G)\leq \bgc(G)+1\le 2\cdot\cbc(G)\le 2\cdot\bn(G)+2,$$ 
 $$\frac{\bgc(G)-3}{6}\leq \frac{\bn(G)-1}{3}\leq\adt(G)\leq 4\cdot \bn(G)+2\leq 4\cdot \bgc(G)+6,$$ $$\frac{\bgc(G)-1}{6}\leq \mc(G)\leq 4\cdot \bgc(G)+6.$$ 	
\end{corollary}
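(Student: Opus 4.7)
The plan is to derive every inequality in the corollary as a direct algebraic consequence of the three preceding lemmas together with Corollary \ref{cor:ineq-adt-mcw--cbc}; no new structural argument should be needed. The bookkeeping naturally splits into three chains, exactly matching the statement.

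For the first chain $\bn(G)\le \cbc(G)\le \bgc(G)+1\le 2\cdot\cbc(G)\le 2\cdot\bn(G)+2$, I would read off the four links in order from, respectively, the left half of Lemma \ref{lm:bnc-cbc-1}, Lemma \ref{lm:cbc_bgc}, Lemma \ref{lm:bgc-cbc} (written as $\bgc(G)+1\le 2\cdot\cbc(G)$), and the right half of Lemma \ref{lm:bnc-cbc-1} doubled. Along the way I would record two composite bounds for later use: $\bgc(G)\le 2\cdot\bn(G)+1$, by combining Lemma \ref{lm:bgc-cbc} with Lemma \ref{lm:bnc-cbc-1}, and $\bn(G)\le \bgc(G)+1$, by combining the left half of Lemma \ref{lm:bnc-cbc-1} with Lemma \ref{lm:cbc_bgc}.

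For the second chain, the middle inequalities $\frac{\bn(G)-1}{3}\le \adt(G)\le 4\cdot\bn(G)+2$ are quoted verbatim from Corollary \ref{cor:ineq-adt-mcw--cbc}. The two outer inequalities then follow from the composite bounds just recorded: $\bn(G)\le \bgc(G)+1$ upgrades the upper estimate to $4\cdot\bgc(G)+6$, while $\bgc(G)\le 2\cdot\bn(G)+1$, rewritten as $\bgc(G)-3\le 2(\bn(G)-1)$, yields $\frac{\bgc(G)-3}{6}\le \frac{\bn(G)-1}{3}$. For the third chain I would start from the $\mc(G)$ inequalities of Corollary \ref{cor:ineq-adt-mcw--cbc}, namely $\frac{\cbc(G)-1}{3}\le \mc(G)\le 4\cdot\cbc(G)+2$: the upper bound becomes $\mc(G)\le 4\cdot\bgc(G)+6$ via $\cbc(G)\le \bgc(G)+1$, and the lower bound becomes $\frac{\bgc(G)-1}{6}\le \mc(G)$ via $\bgc(G)-1\le 2(\cbc(G)-1)$, which is just Lemma \ref{lm:bgc-cbc} rearranged.

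There is no substantive obstacle to overcome, since all the structural work has already been carried out in the preceding subsections; the proof is essentially a diagram-chase through the established inequalities. The only pitfall to watch is an arithmetic one — verifying in each derivation that the rescaling constants on the two sides are compatible (e.g., that the factor of $6$ in $\frac{\bgc(G)-3}{6}$ really does arise from halving $\frac{\bn(G)-1}{3}$ after applying $\bgc(G)\le 2\bn(G)+1$), so that no stronger bound is being inadvertently claimed than what the prior lemmas supply.
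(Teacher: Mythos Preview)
Your proposal is correct and follows essentially the same approach as the paper, which simply states that the corollary is obtained by combining Lemma~\ref{lm:bnc-cbc-1}, Lemma~\ref{lm:bgc-cbc}, Lemma~\ref{lm:cbc_bgc}, and Corollary~\ref{cor:ineq-adt-mcw--cbc}. You invoke exactly these four ingredients and carry out the algebraic bookkeeping the paper leaves implicit; the arithmetic checks out in every link.
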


\commentout{
\bigskip
{\center ***}

We can further relax this condition on cycles. 
Denote by $\BGC(G)$ {\color{blue} (call it the {\em bridging non-locally geodesic cycles constant} of $G$)}  the minimum $\mu$ ($\mu\ge 1$) such that every simple cycle $C$ of $G$ 
that is not $\mu$-locally geodesic has a 
$\lfloor \frac{\mu+1}{2}\rfloor$-bridge (in general position).  

\begin{lemma} \label{lm:BGC_bgc}
	For every graph $G$, $\BGC(G)\leq \bgc(G)\le \frac{3}{2}\BGC(G)$. 
\end{lemma}

\begin{proof} The left inequality is straightforward. To prove the right one, let $\tau:=\BGC(G)$, 
$\mu:=\lfloor\frac{3}{2}\tau\rfloor$, $C$ be a simple cycle of $G$,  $x,y$ be vertices of $C$ such that $d_G(x,y)=d_C(x,y)=\mu+1$. Let also $P(x,y)\subset C$ be a shortest path of $G$ in $C$ (side of $C$) between $x$ and $y$.  We will show that $C$ has a $\lfloor \frac{\mu+1}{2}\rfloor$-bridge from non-$\mu$-locality  side $P(x,y)$ of $C$ to other side of $C$ by induction on $|C|$ (the base of the induction being $C$ with $|C|=2d_G(x,y)$). 

Let $P(x',y')$ be a subpath of $P(x,y)$ of length $\tau+1$ (see Fig. \ref{fig:last} for an illustration). 
By the definition of $\BGC(G)$, there must exist a $\lfloor\frac{\tau+1}{2}\rfloor$-bridge in $C$, i.e., two vertices $v,z\in C$ such that  $d_G(v,z)\le \lfloor\frac{\tau+1}{2}\rfloor<d_C(v,z)$. We can choose such $v$ and $z$ in $C$ that are closest in $G$. 
If that bridge is from non-$\tau$-locality  side $P(x',y')$ of $C$ 
to other side of $C$ (i.e., to $C\setminus P(x',y')$), we are done (since $P(x,y)$ is a shortest path of $G$, both ends of that bridge cannot be in $P(x,y)$, resulting in one end being in $P(x,y)$ and the other one being in $C\setminus P(x,y)$). 

So, we can assume that both $v$ and $z$ are in $C\setminus P(x,y)$. Since $d_G(v,z)<d_C(v,z)$ and we assumed that such $v$ and $z$ are closest in $G$, a cycle $C'$, obtained from $C$ by replacing a side of $C$ between $v$ and $z$ of length $d_C(v,z)$ with a shortest path $P(v,z)$ of $G$ between $v$ and $z$, is simple and satisfies $|C'|<|C|$. By the inductive hypothesis, $C'$ has a  

????

i.e., vertices $v'\in P(x',y')$ and $z\in C\setminus P(x',y')$ such that $d_G(v',z)\le \lfloor\frac{\bgc(G)+1}{2}\rfloor<d_C(v',z)$. The inequality  $d_G(v',z)\le \lfloor\frac{\bgc(G)+1}{2}\rfloor$ guarantees $d_G(v,z)\le d_G(v,v')+d_G(v',z)\le \lfloor\frac{\bgc(G)}{2}\rfloor+1+\lfloor\frac{\bgc(G)+1}{2}\rfloor=k$. The inequality  $d_G(v',z)<d_C(v',z)$  guarantees that $z$ cannot be in $P(x,y)$ (recall that $P(x,y)\subset C$ is a shortest path of $G$ and, hence, for every $u,w\in P(x,y)$, $d_G(u,w)=d_C(u,w)$). 

Consequently, $\cbc(G)\le k=\bgc(G)+1$.  \qed
\end{proof}

} 

\section{Concluding remarks and open questions} \label{sec:concl}
We saw that several graph parameters are coarsely equivalent to tree-length. If one of the parameters from the list $\{\tl(G),\tb(G), \itl(G), \itb(G), \Delta_s(G), \rho_s(G),  \td(G), \ad(G), \adt(G), $ $\bn(G), \mc(G), $ $\ph(G),\sh(G),\br(G), \mf(G), \glc(G), \cbc(G), \bgc(G)\}$ is bounded for a graph $G$, then all other parameters are bounded. We saw that, in fact, all those parameters are within small constant factors from each other. 
Two questions are immediate.  
\begin{itemize}
\item[1.]   Can constants in those inequalities be further improved? Is $\br(G)=\tb(G)$  true? 
\item[2.] Are there any other interesting graph parameters that are coarsely equivalent to tree-length? 
\end{itemize}


We can further relax the  ''bridging non-locally geodesic cycles`` condition on cycles. Let $\BGC(G)$ be the minimum $\mu$ ($\mu\ge 1$) such that every simple cycle $C$ of $G$ 
that is not $\mu$-locally geodesic has a 
$\lfloor \frac{\mu+1}{2}\rfloor$-bridge (in general position). Clearly,  $\BGC(G)\leq \bgc(G)$. 
\begin{itemize}
\item[3.]   Does there exist a constant $c$ such that $\bgc(G)\le c\cdot\BGC(G)$ for every graph $G$? 
\end{itemize}


In \cite{st-tb}, a notion of strong tree-breadth was introduced. The {\em  strong breadth}  of a tree-decomposition $\cT(G)$ of a graph $G$ is the minimum integer $r$ such that for every $B\in V(\cT(G))$ there is a vertex $v_B\in B$ with $B= D_r(v_B,G)$ (i.e., each bag $B$ is equal to a disk  of $G$ of radius at most $r$). The {\em strong tree-breadth}  of $G$, denoted by $\stb(G)$, is the minimum of the strong breadth, over all tree-decompositions of $G$.  Like for the tree-breadth, it is NP-complete to determine if a given graph has strong tree-breadth $r$, even for $r=1$ \cite{st-tb}. See \cite{ArneDisser,st-tb} for other interesting results on tree-breadth and strong tree-breadth. Clearly, $\tb(G)\le \stb(G)$ for every graph $G$. The following question was already asked in \cite{st-tb} and matches very much the topic of this paper.  
\begin{itemize}
\item[4.]   Does there exist a constant $c$ such that $\stb(G)\le c\cdot\tb(G)$ for every graph $G$? 
\end{itemize}


In \cite{CTS1}, a notion generalizing tree-width and tree-breadth was introduced. The {\em $k$-breadth} of a tree-decomposition $\cT(G)$  of a graph $G$ is the minimum integer $r$ such
that for each bag $B\in V(\cT(G))$, there is a set of at most $k$ vertices $C_B = \{v_B^1,\dots,v_B^k\}\subseteq V(G)$ such that for each $u\in B$, $d_G (u, C_B) \le r$ holds (i.e., each bag $B$ can be covered with at most $k$ disks of $G$ of radius at most $r$ each; $B\subseteq \cup_{i=1}^k D_r(v_B^i,G)$. The {\em $k$-tree-breadth} of a graph $G$, denoted by $\tb_k(G)$, is the minimum of the $k$-breadth,
over all tree-decompositions of $G$. Clearly, for every graph $G$, $\tb(G) = \tb_1(G)$ and $\tw(G) \le k - 1$ if and only if $\tb_k(G) = 0$ (each vertex in the bags of the tree-decomposition can be considered as a center of a disk of radius 0). 
\begin{itemize}
\item[5.]   It would be interesting to investigate if there exist  generalizations of  some graph parameters considered in this paper that coarsely describe the $k$-tree-breadth.  
\end{itemize}

In a follow-up paper \cite{coarse-pathlength}, we introduce several graph parameters that are coarsely equivalent to path-length.


\section*{Data Availability Statement}

Data sharing is not applicable to this article as no datasets were generated or analyzed during the current study.

\newpage 
{\bf Appendix A: Graph parameters considered} 

\begin{table} [htbp]
	\centering
	\begin{tabular}{| l | l |}
		\hline
Notation      & Name \\ 
\hline
\noalign{\smallskip}
 $\tl(G)$, $\itl(G)$ & tree-length, inner tree-length of $G$ \\ 
\hline
\noalign{\smallskip}
 $\tb(G)$, $\itb(G)$ & tree-breadth, inner tree-breadth of $G$ \\ 
\hline
\noalign{\smallskip}
 $\stb(G)$ & strong tree-breadth of $G$ \\  
\hline
\noalign{\smallskip}
 $\br(G)$  & bramble interception radius  of $G$  \\  
\hline
\noalign{\smallskip}
$\sh(G)$, $\ph(G)$  & interception radius for Helly families of connected subgraphs or of  paths of $G$ \\  
\hline
\noalign{\smallskip}	
$\Delta_s(G)$ & cluster-diameter of a layering partition of $G$ with respect to a start vertex $s$ \\
\hline
\noalign{\smallskip}	 
$\Delta(G)$, $\widehat{\Delta}(G)$~~ & minimum and maximum cluster-diameter over all layering partitions of $G$ \\
\hline 
\noalign{\smallskip}	
$\rho_s(G)$ & cluster-radius of a layering partition of $G$ with respect to a start vertex $s$ \\
\hline  
\noalign{\smallskip}	 
$\rho(G)$, $\widehat{\rho}(G)$ & minimum and maximum cluster-radius over all layering partitions of $G$ \\
\hline 
\noalign{\smallskip}	
$\td(G)$ & non-contractive multiplicative distortion of embedding of $G$ into a weighted tree \\
\hline 
\noalign{\smallskip}	
$\ad(G)$ & additive distortion of embedding of $G$ into a weighted tree\\
\hline 
\noalign{\smallskip}	
$\adt(G)$ & additive distortion of embedding of $G$ to an unweighted tree \\
\hline 
\noalign{\smallskip}	
$\bn(G)$, $\BNC(G)~$ & bottleneck constant, overall bottleneck constant of $G$ \\
\hline 
\noalign{\smallskip}	
$\mc(G)$ & McCarty-width of $G$ \\
\hline 
\noalign{\smallskip}	
$\mc_k(G)$ & McCarty-width of order $k$ of $G$ \\
\hline 
\noalign{\smallskip}	
$\mf(G)$ & $K_3$-minor fatness of $G$ \\
\hline 
\noalign{\smallskip}	
$\glc(G)$ &  maximum load over all geodesic loaded cycles in $G$ \\
\hline 
\noalign{\smallskip}	
$\cbc(G)$ & cycle bridging constant of $G$ \\
\hline 
\noalign{\smallskip}	
$\bgc(G)$ & ''bridging non-locally geodesic cycles`` constant of $G$ \\
\hline 
\end{tabular}
	\label{table:parameters}
\end{table}

{\bf Appendix B: Bounds known before} 

$$\tl(G) \leq \itl(G)\le 2\cdot\tl(G)  \mbox{~\cite{BerSey2024}~~and~~} \tb(G) \leq \itb(G)\le 2\cdot\tb(G)\mbox{~\cite{Diestel++}}$$ 
$$\rho_s(G) \leq \Delta_s(G) \leq 2\rho_s(G) \mbox{~~ and~~ } \tb(G) \leq \tl(G) \leq 2\tb(G)  \mbox{~~[trivial]}$$ 
$$\Delta(G)\le  \Delta_s(G) \leq\widehat{\Delta}(G)\le 3 \Delta(G) \mbox{~~  \cite{slimness} (see Proposition \ref{prop:ClustDiamAtAnys})}$$ 
	$$\frac{\Delta_s(G)}{3} \leq \tl(G) \leq \Delta_s(G)+1 \mbox{~~  \cite{Dorisb2007} (see Proposition \ref{prop:dorisb})}$$ 
	$$\frac{\rho_s(G)}{3} \leq \tb(G) \leq \rho_s(G)+1 \mbox{~~  \cite{AbDr16,tree-spanner-appr} (see Proposition \ref{prop:Muad-Feodor})}$$ 
	$$\frac{\Delta_s(G)}{3} \leq \tl(G) \leq \td(G) \le 2\cdot \Delta_s(G)+2 \mbox{~~  \cite{AbDr16,ChepoiDNRV12,tree-spanner-appr} (see Proposition \ref{prop:td-tl})}$$ 
$$\frac{\tl(G)-2}{2}\leq \ad(G)\leq 6\cdot \tl(G)\mbox{~~ \cite{BerSey2024} (see Proposition \ref{prop:ad-tl-Seymour})}$$
$$\frac{2}{3}\bn(G)\leq \tl(G)\leq 4\cdot \bn(G)+3 \mbox{~ and ~} 
\ad(G)\le 24\cdot \bn(G)+18 \mbox{~~  \cite{BerSey2024} (see Proposition \ref{pr:Seymour--Manning})} $$ 	
  $$\frac{\tl(G)- 3}{6} \le \mc(G) \le \tl(G) \mbox{~~ \cite{BerSey2024} (see Proposition \ref{prop:sey-mcw})}$$
$$\tl(G)- 1 \le \glc(G) \le 3\cdot\tl(G) \mbox{~~~ \cite{BerSey2024} (see Proposition \ref{prop:sey-lgc})}$$ 
$$\mf(G)\le 2\cdot\bn(G)+1 \mbox{~~and~~} \ad(G)\le 14\cdot\mf(G) \mbox{~~~ \cite{GeorPapa2023}}$$ 

\newpage 
{\bf Appendix C: Bounds 
from this paper} 
\bigskip

{\bf Bounds with $\bn(G)$} (Theorem \ref{th:bnc-delta-tl}, Corollary \ref{cor:ineq-tl-bnc}). 
  $$\frac{\Delta_s(G)-2}{4}\leq \bn(G)= \BNC(G)\leq \frac{3}{2}\Delta_s(G)$$ 
   $$\frac{\tl(G)-3}{4}\leq\frac{\widehat{\Delta}(G)-2}{4}\leq \bn(G)=\BNC(G)\leq\frac{\widehat{\Delta}(G)}{2}\leq\frac{3 }{2}\tl(G)$$
 $$\frac{2}{3}\bn(G)\leq \tl(G)\leq 4\cdot \bn(G)+3$$

{\bf Bounds with $\mc(G)$ (with $\ph(G),\sh(G),\br(G)$)} (Theorem \ref{th:mcw-delta-rho}, Corollary \ref{cor:ineq-bnc-mcw}, Proposition \ref{prop:bramble}).  
 $$\mc(G)\leq \rho(G)\le\rho_s(G)\le \Delta_s(G)\le\widehat{\Delta}(G)\leq 6\cdot \mc(G)$$ 
$$\mc(G)\le \ph(G)\le\sh(G)\le\br(G)\le \tb(G)\leq \tl(G)\leq  \Delta_s(G)+1\leq 6\cdot \mc(G)+1$$ 	
 $$\mc(G)\leq  4\cdot \bn(G)+2 \mbox{~~and~~} \bn(G)\le  3\cdot \mc(G)$$

{\bf Bounds with $\adt(G)$} (Theorem \ref{th:adt-tl-delta}, Corollary \ref{cor:ineq-tl-adt}, Corollary \ref{cor:mcw-adt}).  
 $$\adt(G)\leq \Delta(G)\le \Delta_s(G)\le\widehat{\Delta}(G)\leq 3\cdot \tl(G)\le 6\cdot \adt(G)+3$$ 
$$\adt(G)\leq \Delta_s(G)\le 4\cdot \bn(G)+2 \mbox{~~and~~} \bn(G)\le 3\cdot \adt(G)+1$$

$$\frac{\tl(G)-1}{2}\leq \adt(G)\leq 3\cdot \tl(G) \mbox{~~and~~}  \frac{\adt(G)}{6}\le \mc(G)\le \frac{3\cdot \adt(G)+1}{2}$$

{\bf Bounds with $\mf(G)$} (Theorem \ref{th:mf-tl-mcw}).
$$\frac{\mf(G)}{2}\le \mc(G)\le\tb(G)\le\tl(G)\le \Delta_s(G)+1\le 5\cdot\mf(G)$$  $$\frac{\mf(G)-1}{2}\le \frac{2\cdot\mc(G)-1}{3}\le \adt(G)\le \Delta_s(G)\le 5\cdot \mf(G)-1$$ $$\mf(G)\le 2\cdot\bn(G)+1\le \Delta_s(G)+1\le 5\cdot \mf(G)$$ 

{\bf Bounds with $\cbc(G)$} (Theorem \ref{th:cbc-tl-delta}, Corollary \ref{cor:ineq-tl-cbc}, Corollary \ref{cor:ineq-adt-mcw--cbc}).  
 $$\frac{\tl(G)-3}{4}\leq\frac{\widehat{\Delta}(G)-2}{4}\leq \bn(G)\le \cbc(G)\leq\frac{\widehat{\Delta}(G)}{2}+1\leq\frac{3 }{2}\tl(G)+1$$
  $$\frac{\cbc(G)-2}{3}\leq \frac{\bn(G)-1}{3}\leq\adt(G)\leq 4\cdot \bn(G)+2\leq 4\cdot \cbc(G)+2$$ $$\frac{\cbc(G)-1}{3}\leq \mc(G)\leq 4\cdot \cbc(G)+2$$

{\bf Bounds with $\bgc(G)$} (Theorem \ref{th:bgc-tl-delta},  Corollary \ref{cor:ineq-tl-cbc}, Corollary \ref{cor:ineq-tl-bgc}, Corollary \ref{cor:ineq-adt-mcw-cbc--bgc}).  
 $$\frac{\tl(G)-7}{4}\leq\frac{\widehat{\Delta}(G)-6}{4}\leq \bgc(G)\le 2\cbc(G)-1\leq\widehat{\Delta}(G)+1\leq 3\cdot \tl(G)+1$$
$$\frac{\bgc(G)-1}{3}\leq   \frac{2}{3}(\cbc(G)-1)\leq \tl(G)\leq 4\cdot \cbc(G)+3\leq 4\cdot \bgc(G)+7$$
  $$\bn(G)\le \cbc(G)\leq \bgc(G)+1\le 2\cdot\cbc(G)\le 2\cdot\bn(G)+2$$ 
 $$\frac{\bgc(G)-3}{6}\leq \frac{\bn(G)-1}{3}\leq\adt(G)\leq 4\cdot \bn(G)+2\leq 4\cdot \bgc(G)+6$$ $$\frac{\bgc(G)-1}{6}\leq \mc(G)\leq 4\cdot \bgc(G)+6$$ 	
\end{document}